\numberwithin{equation}{section}
\theoremstyle{plain}
\newtheorem{theorem}{Theorem}[section]
\newtheorem{corollary}[theorem]{Corollary}
\newtheorem{lemma}[theorem]{Lemma}
\newtheorem{proposition}[theorem]{Proposition}
\theoremstyle{definition}
\newtheorem{definition}[theorem]{Definition}
\newtheorem{example}[theorem]{Example}
\theoremstyle{remark}
\newtheorem{remark}[theorem]{Remark}
\newcommand{\A}{\mathcal{A}}
\newcommand{\B}{\mathcal{B}}
\newcommand{\F}{\mathbb{F}}
\newcommand{\Z}{\mathbb{Z}}
\newcommand{\R}{\mathbb{R}}
\newcommand{\C}{\mathbb{C}}
\newcommand{\K}{\mathbb{K}}
\newcommand{\scC}{\mathcal{C}}
\newcommand{\scF}{\mathcal{F}}
\newcommand{\scS}{\mathcal{S}}
\newcommand{\scU}{\mathcal{U}}
\newcommand{\scL}{\mathcal{L}}
\newcommand{\Br}{\operatorname{Br}}
\newcommand{\OS}{\operatorname{OS}}
\newcommand{\Poin}{\operatorname{Poin}}
\newcommand{\Aut}{\operatorname{Aut}}
\newcommand{\bch}{\operatorname{bch}}
\newcommand{\ch}{\operatorname{ch}}
\newcommand{\codim}{\operatorname{codim}}
\newcommand{\Crit}{\operatorname{Crit}}
\newcommand{\DF}{\operatorname{DF}}
\newcommand{\eps}{\varepsilon}
\newcommand{\Gal}{\operatorname{Gal}}
\newcommand{\grad}{\operatorname{grad}}
\newcommand{\Hilb}{\operatorname{Hilb}}
\newcommand{\Hom}{\operatorname{Hom}}
\newcommand{\Ker}{\operatorname{Ker}}
\newcommand{\Mod}{\operatorname{Mod}}
\newcommand{\obj}{\operatorname{obj}}
\newcommand{\rank}{\operatorname{rank}}
\newcommand{\Sal}{\operatorname{Sal}}
\newcommand{\Sep}{\operatorname{Sep}}
\newcommand{\sign}{\operatorname{sign}}
\begin{document}

\title[Hyperplane arrangements]{Topology of hyperplane arrangements via real structure}

\begin{abstract}
This note is a survey on the topology of hyperplane arrangements. 
We mainly focus on the relationship between topology and the 
real structure, such as adjacent relations of chambers and stratifications 
related to real structures. 
\end{abstract}

\author{Masahiko Yoshinaga}
\address{Masahiko Yoshinaga, 
Osaka University}
\email{yoshinaga@math.sci.osaka-u.ac.jp}


\subjclass[2010]{Primary 32S22, Secondary 52C35}

\keywords{Hyperplane arrangements, homotopy type, minimality, 
real structure, local system cohomology}


\date{\today}

\maketitle

\tableofcontents

\section{Introduction}

Let $\A=\{H_1, H_2, \dots, H_n\}$ be a set of hyperplanes 
in $\R^\ell$. On one hand, $\A$ provides a stratification of 
the space $\R^\ell$. On the other hand, the complexification 
gives a complex hyperplane arrangement in $\C^\ell$. 
The complement $M(\A)=\C^\ell\setminus\bigcup_{H\in\A}
H\otimes\C$ is a connected affine variety, which is a 
real $2\ell$-dimensional manifold. The topology of $M(\A)$ 
is known to be closely related to the structure of $\A$. 
In this note, we discuss the relationship between the topology of 
$M(\A)$ and its real structure, focusing on the adjacent relations 
of chambers.

\section{Hyperplane arrangements over general fields}

\subsection{Braid arrangements and graphical arrangements}
\label{sec:braid}

As a typical example, we start this note 
with the \emph{braid arrangements}. 
Let $V=\K^\ell$. 
Let $H_{ij}=\{(x_1, \dots, x_\ell)\in\K^\ell\mid x_i-x_j=0\}$ for 
$1\leq i<j\leq \ell$. The collection of these hyperplanes is 
called the \emph{braid arrangement} denoted by 
\[
\Br(\ell)=\{H_{ij}\mid 1\leq i<j\leq\ell\}. 
\]

\begin{example}
\label{ex:rbraid}
(Braid arrangements over $\R$.) 
If $\K=\R$, 
the complement of the braid arrangement $\Br(\ell)$ 
consists of finitely many connected components called 
\emph{regions}, or \emph{chambers}. Each chamber is 
expressed as 
\[
\{(x_1, \dots, x_\ell)\mid x_{\sigma(1)}<\dots<x_{\sigma(\ell)}\}, 
\]
where $\sigma\in\frak{S}_\ell$ a permutation. Therefore, there 
are exactly $\ell!$ chambers. 
\end{example}

\begin{example}
\label{ex:fbraid}
(Braid arrangements over $\F_q$.) 
If $\K=\F_q$, 
the complement of the braid arrangement $\Br(\ell)$ 
\[
\F_q^\ell\setminus\bigcup_{1\leq i<j\leq\ell}H_{ij}
\]
consists of ordered tuples $(x_1, \dots, x_\ell)$ 
$\ell$ mutually distinct elements in $\F_q$. 
Therefore, there are exactly $q(q-1) \dots (q-\ell+1)$ elements. 
\end{example}

\begin{example}
\label{ex:cbraid}
(Braid arrangements over $\C$. \cite{fad-neu, fox-neu, arn})
If $\K=\C$, the complement of the braid arrangement 
\[
X_\ell=\{(x_1, \dots, x_\ell)\mid x_i\in\C, x_i\neq x_j (i\neq j)\}
\]
has a natural fibration 
\[
p: X_\ell\longrightarrow X_{\ell-1}, (x_1, \dots, x_\ell)
\longmapsto (x_1, \dots, x_{\ell-1})
\]
with the fiber $\C\setminus\{\mbox{$\ell$ points}\}$. 
Using this fibration, one can prove $X_\ell$ is a $K(\pi, 1)$-space. 
Also, the fibration is homologically trivial, hence one can apply 
Leray-Hirsch theorem to obtain the Poincar\'e polynomial 
\[
\Poin(X_\ell, t)=(1+t)(1+2t)\dots (1+(\ell-1)t). 
\]
\end{example}

Even though the nature of the problems in 
Example \ref{ex:rbraid}, 
Example \ref{ex:fbraid}, and 
Example \ref{ex:cbraid} is different, the results 
(the numbers of chambers, points, and the Poincar\'e polynomial) 
look similar. This phenomenon is explained (in \S \ref{sec:basics}) 
by the fact that 
these invariants are determined solely by the intersection poset and 
the characteristic polynomial which are the same for any 
field $\K$. 

\subsection{Definitions and classical results}
\label{sec:basics}

Let $V=\K^\ell$ be a linear space. A subset of $H$ of $V$ 
of the form 
\[
H=\{x\in V\mid \alpha(x)=b\}, 
\]
where $\alpha:V\to\K$ is a non-zero linear form and 
$b\in\K$, is called an affine hyperplane. A set 
$\A=\{H_\lambda\}_{\lambda\in\Lambda}$ of hyperplanes is 
called an \emph{(affine) hyperplane arrangement}, or simply 
an \emph{arrangement}. 
An arrangement $\A$ is called \emph{central} if 
$\bigcap_{H\in\A}H\neq\emptyset$ and contains the origin 
$\bm{0}$. 
An arrangement $\A$ is called \emph{essential} if 
there exist $H_1, \dots, H_\ell\in\A$ such that 
$\dim (H_1\cap\cdots\cap H_\ell)=0$. 

\subsection{Intersection posets, the M\"obius function, and chambers}

\begin{definition}
Let $\A$ be an arrangement. Denote the set 
of non-empty intersections of hyperplanes by 
\[
L(\A)=\left\{\left. \bigcap_{H\in\B}H\neq\emptyset
\right|\ \B\subset\A \right\}. 
\]
Note that if $\B=\emptyset$, $\bigcap_{H\in\B}H=V$. 
We consider $L(\A)$ as a poset equipped with the 
reverse inclusion order (Figure \ref{fig:poset}). We also 
consider the set of all $k$-dimensional intersections, 
\[
L_k(\A)=\{X\in L(\A)\mid \dim X=k\}. 
\]
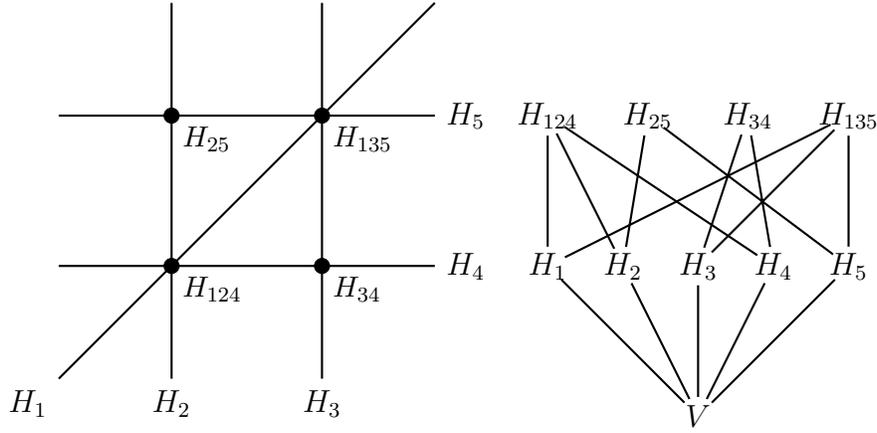
\begin{figure}[htbp]
\centering
\begin{tikzpicture}


\coordinate (C1) at (1.5,0.75);
\coordinate (C2) at (0.75,1.5);
\coordinate (C3) at (1,3);
\coordinate (C4) at (1,5);

\coordinate (C5) at (3,1);
\coordinate (C6) at (3.5,2.5);
\coordinate (C7) at (2.5,3.5);
\coordinate (C8) at (3,5);

\coordinate (C9) at (5,1);
\coordinate (C10) at (5,3);
\coordinate (C11) at (5.25,4.5);
\coordinate (C12) at (4.5,5.25);

\draw[thick] (0.5,0.5) node[anchor=north east] {$H_1$} -- ++(5,5); 
\draw[thick] (2,0.5) node[anchor=north] {$H_2$} -- ++(0,5); 
\draw[thick] (4,0.5) node[anchor=north] {$H_3$} -- ++(0,5); 
\draw[thick] (0.5,2) -- ++(5,0) node[anchor=west] {$H_4$}; 
\draw[thick] (0.5,4) -- ++(5,0) node[anchor=west] {$H_5$}; 

\filldraw[fill=black, draw=black] (2,2) node[anchor=north west] {\small $H_{124}$} circle (0.1); 
\filldraw[fill=black, draw=black] (2,4) node[anchor=north west] {\small $H_{25}$} circle (0.1); 
\filldraw[fill=black, draw=black] (4,2) node[anchor=north west] {\small $H_{34}$} circle (0.1); 
\filldraw[fill=black, draw=black] (4,4) node[anchor=north west] {\small $H_{135}$} circle (0.1); 

\coordinate (V) at (9,0);
\coordinate (H1) at (7,2);
\coordinate (H2) at (8,2);
\coordinate (H3) at (9,2);
\coordinate (H4) at (10,2);
\coordinate (H5) at (11,2);
\coordinate (H124) at (7,4);
\coordinate (H25) at (8.33,4);
\coordinate (H34) at (9.66,4);
\coordinate (H135) at (11,4);

\draw[thick] (V)--(H1);
\draw[thick] (V)--(H2);
\draw[thick] (V)--(H3);
\draw[thick] (V)--(H4);
\draw[thick] (V)--(H5);
\draw[thick] (H1)--(H124)--(H2)--(H25)--(H5)--(H135)--(H3)--(H34)--(H4);
\draw[thick] (H4)--(H124);
\draw[thick] (H1)--(H135);

\filldraw[fill=white, draw=white]  (V) node {$V$} circle (0.25); 
\filldraw[fill=white, draw=white]  (H1) node {$H_1$} circle (0.25); 
\filldraw[fill=white, draw=white]  (H2) node {$H_2$} circle (0.25); 
\filldraw[fill=white, draw=white]  (H3) node {$H_3$} circle (0.25); 
\filldraw[fill=white, draw=white]  (H4) node {$H_4$} circle (0.25); 
\filldraw[fill=white, draw=white]  (H5) node {$H_5$} circle (0.25); 
\filldraw[fill=white, draw=white]  (H124) node {$H_{124}$} circle (0.26); 
\filldraw[fill=white, draw=white]  (H25) node {$H_{25}$} circle (0.26); 
\filldraw[fill=white, draw=white]  (H34) node {$H_{34}$} circle (0.26); 
\filldraw[fill=white, draw=white]  (H135) node {$H_{135}$} circle (0.26); 

\end{tikzpicture}
\caption{A line arrangement and the intersection poset} 
\label{fig:poset}
\end{figure}
\end{definition}

\begin{remark}
The intersection poset 
$L(\A)$ is graded in the sense that lengths of 
maximal chains are equal. 

We also note that $\A$ is essential if and only if 
$L_0(\A)\neq\emptyset$. 
\end{remark}

Let $X\in L(\A)$. The arrangement $\A$ naturally 
defines an arrangement on $X$ 
\[
\{X\cap H\mid H\in\A, H\not\supset X\}
\]
which we denote by $\A^X$ and call the restriction. 
We also define 
\[
\A_X:=\{H\in\A\mid H\supset X\}
\]
which is called the localization at $X$. It is clear that 
$L(\A^X)\simeq L(\A)_{\geq X}$ and 
$L(\A_X)\simeq L(\A)_{\leq X}$. 

If the arrangement $\A$ is defined in $\R^\ell$, then the 
complement 
$\R^\ell\setminus\bigcup_{i=1}^nH_i$ is a disjoint union of 
open convex subsets. Each connected component is 
called a \emph{chamber}. 
We denote the set of chambers by $\ch(\A)$. We also denote 
the set of \emph{bounded chambers} by $\bch(\A)$. 

\subsection{Characteristic polynomials}

Define the M\"obius function $\mu:L(\A)\to\Z$ by 
\begin{equation}
\mu(X)=
\begin{cases}
1 & \mbox{ if }X=V\\
-\sum_{Z<X}\mu(Z) & \mbox{ if }X\neq V. 
\end{cases}
\end{equation}

\begin{definition}
The characteristic polynomial $\chi(\A, t)$ is defined as 
\[
\chi(\A, t)=\sum_{X\in L(\A)}\mu(X)t^{\dim X}. 
\]
\end{definition}
The characteristic polynomial has other expressions. 
\begin{proposition}
\label{prop:add-del}
Let $\A=\{H_1, \dots, H_n\}$ be an arrangement. 

(1) 
\[
\chi(\A, t)=\sum_{\substack{I\subset [n]\\ H_I\neq\emptyset}}
(-1)^{\# I}t^{\dim H_I}, 
\]
where $H_I=\bigcap_{i\in I}H_i$. 

(2) 
\begin{equation}
\chi(\A, t)=
\begin{cases}
t^{\dim V}, &\mbox{ if }\A=\emptyset, \\
\chi(\A\setminus\{H\}, t)-\chi(\A^H, t), 
&\mbox{ if }H\in\A\neq\emptyset. 
\end{cases}
\end{equation}
\end{proposition}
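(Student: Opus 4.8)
First, the base case of (2) needs nothing: if $\A=\emptyset$ then $L(\A)=\{V\}$ and $\mu(V)=1$, so $\chi(\emptyset,t)=t^{\dim V}$. For the rest, the plan is to prove (1) as an instance of M\"obius inversion and then to deduce (2) from (1) by partitioning the index set $[n]$ according to whether one fixed hyperplane is used. Everything below uses only the definitions of $\mu$ and $\chi(\A,t)$ and the identifications $L(\A^X)\simeq L(\A)_{\ge X}$ already recorded in the excerpt.

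\emph{Proof of (1).} For $X\in L(\A)$ put $f(X)=\sum_{I\subseteq[n],\,H_I=X}(-1)^{\#I}$. Collecting the terms of the right-hand side of (1) by the value $X=H_I$ rewrites it as $\sum_{X\in L(\A)}f(X)\,t^{\dim X}$, so by the definition of $\chi(\A,t)$ it is enough to show $f=\mu$. I would show this by checking that $f$ obeys the relation that characterises $\mu$. Fix $X\in L(\A)$ and sum $f$ over $L(\A)_{\le X}=\{Z\in L(\A):Z\supseteq X\}$; since each $I$ with $H_I\ne\emptyset$ contributes to exactly one term, namely $Z=H_I$, and this $Z$ lies in $L(\A)_{\le X}$ iff $H_I\supseteq X$,
\[
\sum_{Z\supseteq X}f(Z)=\sum_{\substack{I\subseteq[n]\\ H_I\supseteq X}}(-1)^{\#I}=\sum_{I\subseteq S_X}(-1)^{\#I},\qquad S_X:=\{\,i\in[n]:H_i\supseteq X\,\},
\]
the middle equality because $\bigcap_{i\in I}H_i\supseteq X$ holds precisely when every $H_i$ with $i\in I$ contains $X$. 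The last sum equals $1$ if $S_X=\emptyset$ and $0$ otherwise, and $S_X=\emptyset$ happens exactly for $X=V$ (a proper affine hyperplane is never equal to $V$, while every $X\ne V$ in $L(\A)$ is a nonempty intersection of hyperplanes and hence contained in one of them). Thus $\sum_{Z\le X}f(Z)$ equals $1$ for $X=V$ and $0$ otherwise, which is exactly the defining recursion for $\mu$; an induction on $\dim V-\dim X$ (when $X\ne V$, every $Z\supsetneq X$ has strictly larger dimension) then gives $f=\mu$, proving (1).

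\emph{Proof of (2) from (1).} Assume $\A\ne\emptyset$, fix $H=H_n\in\A$, and set $\A'=\A\setminus\{H\}$. Apply (1) to $\A$ and split the sum by whether $n\in I$. The terms with $n\notin I$ run over $I\subseteq[n-1]$ with $H_I\ne\emptyset$ and, by (1) applied to $\A'$, add up to $\chi(\A',t)$. For the terms with $n\in I$, write $I=J\cup\{n\}$ with $J\subseteq[n-1]$ and use $H_I=H_J\cap H$; their sum is $-S$, where
\[
S=\sum_{\substack{J\subseteq[n-1]\\ H_J\cap H\ne\emptyset}}(-1)^{\#J}\,t^{\dim(H_J\cap H)}.
\]
So (2) reduces to $S=\chi(\A^H,t)$, and I would establish this by repeating the M\"obius-inversion argument of (1) inside $L(\A^H)\simeq L(\A)_{\ge H}$, whose bottom element $H$ plays the role of $V$. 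Grouping $S$ by $K:=H_J\cap H\in L(\A^H)$, the claim becomes $\sum_{J\subseteq[n-1],\,H_J\cap H=K}(-1)^{\#J}=\mu_{\A^H}(K)$ for every $K$; summing the left side over $\{K'\in L(\A^H):K'\supseteq K\}$ collapses, just as before, to $\sum_{J\subseteq T_K}(-1)^{\#J}$ with $T_K=\{\,j\in[n-1]:H_j\supseteq K\,\}$ (here one uses $K\subseteq H$, so that $H_J\cap H\supseteq K$ is equivalent to $H_J\supseteq K$), and $T_K=\emptyset$ iff $K=H$. This is the defining recursion for $\mu_{\A^H}$, so the two functions coincide and $S=\chi(\A^H,t)$.

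\emph{Where the care is needed.} The only delicate point is this last identification. It is tempting to invoke (1) directly for $\A^H$, but the restriction map $[n-1]\to\A^H$, $j\mapsto H_j\cap H$, is in general neither injective nor everywhere defined (distinct hyperplanes can meet $H$ in the same subspace, and some may miss $H$ entirely), so no relabelling of the hyperplanes of $\A^H$ by a subset of $[n-1]$ is available; carrying out the M\"obius inversion intrinsically inside $L(\A^H)$ avoids this, as it uses only the poset and the dimension function. For the same reason nothing changes if $\A$ is merely affine rather than central — empty intersections are simply discarded throughout — which is consistent with the field-independence already visible for $\Br(\ell)$ in \S\ref{sec:braid}.
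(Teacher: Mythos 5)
Your argument is correct. The paper states Proposition~\ref{prop:add-del} without proof (it is the classical Whitney-type formula together with deletion--restriction, as in Orlik--Terao or Stanley), so there is nothing to compare against line by line; your route is the standard one. Part (1) is handled exactly right: you verify that $f(X)=\sum_{H_I=X}(-1)^{\#I}$ satisfies the defining recursion of $\mu$ by summing over $L(\A)_{\le X}$ and using $(1-1)^{\#S_X}$, with the correct observation that $S_X=\emptyset$ iff $X=V$. Part (2) is also sound, and you correctly identify the one genuinely delicate point: the assignment $j\mapsto H\cap H_j$ from $[n-1]$ to $\A^H$ is neither injective nor totally defined, so one cannot simply quote (1) for $\A^H$ with the inherited labels; redoing the M\"obius inversion intrinsically in $L(\A^H)\simeq L(\A)_{\ge H}$, with $H$ as bottom element and $T_K=\emptyset$ iff $K=H$ (which uses that the hyperplanes of $\A$ are distinct), closes that gap cleanly.
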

If the arrangement $\A$ is defined over $\R$, then it decomposes 
the space $\R^\ell$ into open convex connected components. 
Such a connected component is called a \emph{chamber} and the 
set of all chambers is denoted by $\ch(\A)$. 
If $\A$ is essential, there may exist \emph{bounded chambers}. 
The set of all bounded chambers is denoted by $\bch(\A)$. 

As applications of Proposition \ref{prop:add-del}, we have 
the following. 
\begin{proposition}
\label{prop:chicount}
(1) (Zaslavsky) 
Let $\A=\{H_1, \dots,H_n\}$ be an arrangement in $V=\R^\ell$. 
Then 
\[
\#\ch(\A)=(-1)^\ell\chi(\A, -1). 
\]
Furthermore, if $\A$ is essential, then 
\[
\#\bch(\A)=(-1)^\ell\chi(\A, 1). 
\]
(2) (Crapo-Rota) 
Let $\A=\{H_1, \dots,H_n\}$ be an arrangement in $V=\F_q^\ell$. 
Then 
\[
\# M(\A)=\chi(\A, q). 
\]
\end{proposition}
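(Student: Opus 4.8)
The plan is to deduce all three identities from the deletion--restriction recursion for the characteristic polynomial (Proposition~\ref{prop:add-del}(2)), by producing for each of the quantities $\#\ch(\A)$, $\#\bch(\A)$, $\#M(\A)$ a recursion of the same shape and then inducting on $n=\#\A$. The base case $\A=\emptyset$ in $\R^\ell$ (resp. in $\F_q^\ell$) is immediate, since $\chi(\emptyset,t)=t^\ell$ and there is exactly one chamber (resp. $q^\ell$ points).

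For part~(2) the fastest route actually avoids the induction: setting $t=q$ in Proposition~\ref{prop:add-del}(1) gives
\[
\chi(\A,q)=\sum_{\substack{I\subset[n]\\ H_I\neq\emptyset}}(-1)^{\#I}q^{\dim H_I}
=\sum_{I\subset[n]}(-1)^{\#I}\,\#\bigl(H_I\cap\F_q^\ell\bigr),
\]
because a nonempty affine subspace of dimension $d$ over $\F_q$ has $q^d$ points and empty intersections contribute $0$; and the right-hand side is precisely the inclusion--exclusion count of $\#M(\A)=\#\bigl(\F_q^\ell\setminus\bigcup_{i=1}^nH_i\bigr)$. (Equivalently one inducts: $M(\A\setminus\{H\})$ is the disjoint union of $M(\A)$ and the complement of $\A^H$ inside $H\cong\F_q^{\ell-1}$, which reproduces Proposition~\ref{prop:add-del}(2) at $t=q$.)

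For the first identity in part~(1), induct on $n$. Fix $H\in\A$. A chamber of $\A\setminus\{H\}$ that $H$ enters is split by $H$ into two chambers of $\A$, while one that $H$ misses remains a single chamber of $\A$; moreover $C\mapsto C\cap H$ is a bijection from the chambers of $\A\setminus\{H\}$ cut by $H$ onto $\ch(\A^H)$ (the slice $C\cap H$ meets no other hyperplane, hence is a single chamber of $\A^H$). Thus $\#\ch(\A)=\#\ch(\A\setminus\{H\})+\#\ch(\A^H)$. Multiplying Proposition~\ref{prop:add-del}(2) by $(-1)^\ell$, setting $t=-1$, and using that $\A^H$ lives in the $(\ell-1)$-dimensional space $H$ so that the inductive hypothesis gives $\#\ch(\A^H)=(-1)^{\ell-1}\chi(\A^H,-1)$, the two recursions coincide, proving $\#\ch(\A)=(-1)^\ell\chi(\A,-1)$.

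The last identity, $\#\bch(\A)=(-1)^\ell\chi(\A,1)$ for essential $\A$, is the delicate one and I expect it to be the main obstacle, for two reasons. First, essentiality is not preserved under arbitrary deletion: recalling that $\A$ is essential exactly when the linear hyperplanes parallel to the members of $\A$ intersect only in $\{\bm 0\}$, this spanning condition has a redundant member whenever $n>\ell$, so one can choose $H\in\A$ with $\A\setminus\{H\}$ still essential, and one checks essentiality is then inherited by $\A^H$ as well; when $n=\ell$ the arrangement is (linearly) Boolean, $\chi(\A,t)=(t-1)^\ell$ has $\chi(\A,1)=0$, and there is no bounded chamber, anchoring the induction. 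Second, one must prove the recursion $\#\bch(\A)=\#\bch(\A\setminus\{H\})+\#\bch(\A^H)$ for such an $H$, and here a little convex geometry is needed. When $H$ cuts a chamber $C$ of $\A\setminus\{H\}$, one asks how many of $C\cap H^{+}$, $C\cap H^{-}$ are bounded. Since $\A\setminus\{H\}$ is essential, $\operatorname{rec}(C)$ contains no line (a line in $\operatorname{rec}(C)$ would be parallel to every hyperplane of $\A\setminus\{H\}$), hence is pointed; and $C\cap H$ is bounded precisely when $\operatorname{rec}(C)$ meets the linear hyperplane parallel to $H$ only at $0$, which — given pointedness — forces $\operatorname{rec}(C)\setminus\{0\}$ to lie strictly on one side of $H$, so exactly one of $C\cap H^{\pm}$ is bounded (both, if $C$ itself was bounded), while if $C\cap H$ is unbounded neither piece is. Combining this with the bijection $C\mapsto C\cap H$ from chambers of $\A\setminus\{H\}$ cut by $H$ onto $\ch(\A^H)$ — under which boundedness of $C\cap H$ is exactly boundedness of the corresponding chamber of $\A^H$ — one reads off $\#\bch(\A)-\#\bch(\A\setminus\{H\})=\#\bch(\A^H)$. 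Finally, multiplying Proposition~\ref{prop:add-del}(2) by $(-1)^\ell$ and evaluating at $t=1$ matches the two recursions, closing the induction.
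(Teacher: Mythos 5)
Your proof is correct, and it follows exactly the route the paper indicates: the paper states Proposition~\ref{prop:chicount} without proof, merely noting that it is an application of the deletion--restriction recursion of Proposition~\ref{prop:add-del}, and your argument supplies precisely the missing details (the matching recursions for $\#\ch$, $\#\bch$, and $\#M(\A)$, together with the convex-geometric analysis of how $H$ splits a chamber and the bookkeeping of essentiality under deletion and restriction). The one point worth double-checking -- your identification of the paper's definition of essential with the condition that the normals span -- is indeed valid, since $\ell$ hyperplanes with linearly independent normals always have a (unique) common point.
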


\begin{example}
\label{ex:A3}
Let $\A=\{H_1, H_2, H_3, H_4, H_5\}$ be $5$ lines in $\R^2$ as shown in 
Figure \ref{fig:poset}. Then $\mu(H_i)=-1$ for each $i\in[5]$. 
By the definition of M\"obius function, we also have 
$\mu(H_{25})=\mu(H_{34})=1$ and 
$\mu(H_{124})=\mu(H_{135})=2$. Hence, we have 
\[
\chi(\A, t)=t^2-5t+6=(t-2)(t-3). 
\]
Also, we have $\chi(\A, -1)=12=\#\ch(\A)$, and 
$\chi(\A, 1)=2=\#\bch(\A)$. 
\end{example}

\subsection{Chromatic polynomial}

An interesting class of arrangements is the so-called 
\emph{graphical arrangement}. In this class, the characteristic 
polynomial is directly connected to enumerative problems 
\cite{sta-ec1, sta-hyp}. 

Let $([\ell], E)$ be a finite simple graph, where 
$[\ell]=\{1, \dots, \ell\}$ is the vertex set, and 
$E\subset\{\{i, j\}\mid i, j\in[\ell], i\neq j\}$ is the 
set of edges. 

\begin{definition}
Let $G=([\ell], e)$ be a graph and 
$n$ be a positive integer. 
A map $c: [\ell]\longrightarrow [n]$ is called a 
\emph{vertex coloring} (with $n$-colors) 
of $G$ if $c(i)\neq c(j)$ whenever 
$\{i, j\}\in E$. The number of vertex colorings 
with $n$-colors is denoted by 
\[
\chi_G(n):=\#\{c: [\ell]\longrightarrow [n]\mid c\mbox{ is a coloring}\}. 
\]
\end{definition}
As we will see below, the function 
$\chi_G(n)$ is known to be a polynomial in $n$, 
called the \emph{chromatic polynomial}. 
Furthermore, it is the characteristic polynomial of the following 
graphical arrangement. 
\begin{definition}
Let $G=([\ell], e)$ be a graph. Define the \emph{graphical arrangement} 
$\A_G$ by 
\[
\A_G=\{H_{ij}\mid \{i, j\}\in E\}. 
\]
\end{definition}
Note that if $G$ is the complete graph, then $\A_G$ is equal to 
$\Br(\ell)$. Hence $\A_G$ is a subarrangement of the graphical 
arrangement. 
\begin{proposition}
Let $G=([\ell], e)$ be a graph. Then $\chi_G(t)=\chi(\A_G, t)$. 
\end{proposition}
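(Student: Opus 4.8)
The plan is to prove the identity by induction on the number of edges $\# E$, via deletion--contraction, matching it step by step with the recursion in Proposition~\ref{prop:add-del}(2). First, the base case: if $E=\emptyset$, then every map $c\colon[\ell]\to[n]$ is a coloring, so $\chi_G(n)=n^\ell$ is given by the polynomial $t^\ell$; on the other hand $\A_G=\emptyset$, so $\chi(\A_G,t)=t^{\dim V}=t^\ell$. Hence the two agree (and in particular $\chi_G$ is a polynomial in this case).

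For the inductive step, fix an edge $e=\{i,j\}\in E$, and let $G\setminus e=([\ell],E\setminus\{e\})$ be the deletion and $G/e$ the contraction (identifying the vertices $i$ and $j$, then discarding loops and multiple edges so that $G/e$ is again a finite simple graph). I would first record the classical deletion--contraction recursion for the chromatic polynomial: a coloring of $G\setminus e$ either satisfies $c(i)\ne c(j)$, in which case it is exactly a coloring of $G$, or satisfies $c(i)=c(j)$, in which case it descends to a coloring of $G/e$; this yields
\[
\chi_{G\setminus e}(n)=\chi_G(n)+\chi_{G/e}(n),
\qquad\text{i.e.}\qquad
\chi_G(n)=\chi_{G\setminus e}(n)-\chi_{G/e}(n).
\]

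The heart of the argument is to match this with the arrangement recursion. Deleting the hyperplane $H_{ij}$ from $\A_G$ plainly gives $\A_{G\setminus e}$. For the restriction, observe that $H_{kl}\supset H_{ij}$ only when $\{k,l\}=\{i,j\}$, so
\[
\A_G^{H_{ij}}=\{H_{ij}\cap H_{kl}\mid \{k,l\}\in E\setminus\{e\}\};
\]
on $H_{ij}=\{x_i=x_j\}$, using the single coordinate $x_i=x_j$ for the merged vertex identifies each $H_{ij}\cap H_{kl}$ with the corresponding diagonal hyperplane on the vertex set of $G/e$. Thus $\A_G^{H_{ij}}\cong\A_{G/e}$, so $\chi(\A_G^{H_{ij}},t)=\chi(\A_{G/e},t)$; combining this with Proposition~\ref{prop:add-del}(2) and the inductive hypothesis applied to $G\setminus e$ and $G/e$ (both with strictly fewer edges) gives $\chi_G(t)=\chi(\A_G,t)$.

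I expect the identification $\A_G^{H_{ij}}\cong\A_{G/e}$ to be the only delicate point: one must check that when contraction produces parallel edges, say when both $\{i,k\}$ and $\{j,k\}$ lie in $E$, the coincidence $H_{ij}\cap H_{ik}=H_{ij}\cap H_{jk}$ correctly reflects the merging of those parallel edges into one (and no loops arise since $G$ is simple), so that the restricted arrangement is precisely the graphical arrangement of the \emph{simple} graph $G/e$. Everything else is the standard deletion--contraction formalism. As an alternative route (and a useful consistency check) one can instead derive the identity directly from Proposition~\ref{prop:add-del}(1): for a graphical arrangement $H_I=\bigcap_{\{i,j\}\in I}H_{ij}$ is never empty, and $\dim H_I$ equals the number $c(I)$ of connected components of the spanning subgraph $([\ell],I)$ (a vector in $H_I$ being constant on each component), so the formula becomes Whitney's subset expansion $\sum_{I\subseteq E}(-1)^{\#I}t^{c(I)}$, which one obtains from vertex colorings by inclusion--exclusion over the set of monochromatic edges.
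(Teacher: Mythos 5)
Your proof is correct and follows essentially the same route as the paper: induction on the number of edges via the deletion--contraction recursion for colorings, matched with Proposition~\ref{prop:add-del}(2) through the identification $(\A_G)^{H_e}=\A_{G/e}$. Your extra care about parallel edges under contraction (and the alternative Whitney-expansion check) is a welcome refinement of a point the paper states without comment, but it does not change the argument.
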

\begin{proof}
The proof is by induction on the number of edges. 
In the case $E=\emptyset$, $\A_G$ is the empty arrangement. 
Then it is clear that 
$\chi_G(t)=\chi(\A_G, t)=t^\ell$. Suppose that $E\neq\emptyset$. 
Choose an edge $e=\{i, j\}\in E$ and consider the corresponding 
hyperplane $H_e=H_{ij}$. Compare the set of colorings of 
graphs $G$, the deletion $G\smallsetminus e$, and the contraction 
$G/e$ (Figure \ref{fig:dc}). 
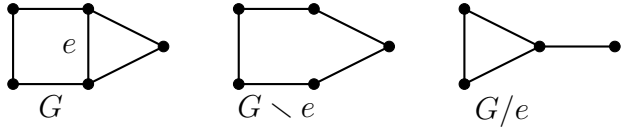
\begin{figure}[htbp]
\begin{tikzpicture}

\filldraw[fill=black, draw=black] (0,0) circle (2pt) ;
\filldraw[fill=black, draw=black] (1,0) circle (2pt) ;
\filldraw[fill=black, draw=black] (0,1) circle (2pt) ;
\filldraw[fill=black, draw=black] (1,1) circle (2pt) ;
\filldraw[fill=black, draw=black] (2,0.5) circle (2pt) ;
\draw[thick] (1,0) -- node[left]{$e$} (1,1) --(0,1) --
(0,0) -- node[below] {$G$} (1,0) --(2,0.5)--(1,1);

\filldraw[fill=black, draw=black] (3,0) circle (2pt) ;
\filldraw[fill=black, draw=black] (4,0) circle (2pt) ;
\filldraw[fill=black, draw=black] (3,1) circle (2pt) ;
\filldraw[fill=black, draw=black] (4,1) circle (2pt) ;
\filldraw[fill=black, draw=black] (5,0.5) circle (2pt) ;
\draw[thick] (4,0) -- (5,0.5) -- (4,1) --(3,1)--
(3,0) -- node[below] {$G\smallsetminus e$} (4,0);

\filldraw[fill=black, draw=black] (6,0) circle (2pt) ;
\filldraw[fill=black, draw=black] (6,1) circle (2pt) ;
\filldraw[fill=black, draw=black] (7,0.5) circle (2pt) ;
\filldraw[fill=black, draw=black] (8,0.5) circle (2pt) ;
\draw[thick] (7,0.5)--(6,1)--(6,0) -- (7,0.5) -- (8,0.5);
\draw (6.5,0) node[below] {$G/ e$} ;

\end{tikzpicture}
\caption{The deletion $G\smallsetminus e$ and the contraction $G/e$}
\label{fig:dc}
\end{figure}
Let $c:[\ell]\longrightarrow[n]$ be a coloring of the deletion 
$G\smallsetminus e$. The set of colorings of $G\smallsetminus e$ 
is decomposed into two subsets according to 
$c(i)\neq c(j)$ or $c(i)=c(j)$. 
If $c(i)\neq c(j)$, it is equivalent to a coloring of $G$, and 
if $c(i)=c(j)$, it is equivalent to a coloring of $G/e$. 
Thus we have 
\[
\{\mbox{Coloring of }G\smallsetminus e\}=
\{\mbox{Coloring of }G\}\sqcup
\{\mbox{Coloring of }G/ e\}, 
\]
This leads to the following recursive formula 
\begin{equation}
\chi_G(n)=\chi_{G\smallsetminus e}(n)-\chi_{G/e}(n). 
\end{equation}
We also note that the graphical arrangement of the 
contraction $G/e$ can be identified with the restriction of 
$A_G$ on $H_e$, in other words, $A_{G/e}=(\A_G)^{H_e}$. 
Thus we have 
\[
\begin{split}
\chi_G(t)
&=
\chi_{G\smallsetminus e}(t)-\chi_{G/e}(t)\\
&=
\chi(A_{G\smallsetminus e}, t)-\chi(A_{G/e}, t)\\
&=
\chi(\A_G\setminus\{H_e\}, t)-\chi((\A_G)^{H_e}, t)\\
&=
\chi(\A, t). 
\end{split}
\]
\end{proof}
As we will see, the coefficients of 
the characteristic polynomial are equal to the Betti numbers 
of complement of a complex hyperplane arrangement. 
Recently, a generalization, \emph{characteristic quasi-polynomials} 
has also been intensively studied in connection with 
topology of toric arrangements in $(\C^\times)^\ell$ 
\cite{ktt, moc-ari, lty}.

\subsection{Orlik-Solomon algebras} 

Let $\A=\{H_1, \dots, H_n\}$ be an arrangement. 
Let $E=\bigcup_{i=1}^n\Z e_i$ be a free $\Z$-module generated 
by $e_1, \dots, e_n$ and let $\wedge E$ be its exterior algebra. 
For an (ordered) subset $I=\{i_1, i_2, \dots, i_p\}
\subset [n]$, denote $e_I=e_{i_1}\wedge\dots\wedge e_{i_p}$. 
We also denote 
\[
\partial e_I=\sum_{\alpha=1}^p (-1)^{\alpha-1}e_{i_1}\wedge\dots\wedge
\widehat{e_{i_\alpha}}\wedge\dots\wedge e_{i_p}. 
\]
The Orlik-Solomon algebra is defined as the quotient of 
$\wedge E$ by the following Orlik-Solomon ideal. 
\begin{definition}
(1) A subset $I\subset [n]$ is called dependent if $H_I\neq\emptyset$ 
and $\codim H_I<\#I$. 

(2) The Orlik-Solomon ideal $\mathcal{I}(\A)$ of $\wedge E$ is 
the ideal generated by the following two types of elements. 
\[
\mathcal{I}(\A)=
\left\langle
e_I\mid I\subset [n],  H_I=\emptyset
\right\rangle
+
\left\langle
\partial e_I\mid I\subset [n]\mbox{ is dependent}
\right\rangle. 
\]

(3) The Orlik-Solomon algebra $\OS^\bullet(\A)$ is defined as 
follows. 
\[
\OS^\bullet(\A)=\wedge E/\mathcal{I}(\A). 
\]
\end{definition}
Since the Orlik-Solomon ideal is generated by homogeneous 
elements, $\OS^\bullet(\A)$ is a graded algebra. 
The Hilbert series of the Orlik-Solomon algebra 
$\Hilb(\OS^\bullet(\A), t)=\sum_{k}\rank_\Z\OS^k(\A)\cdot t^k$ 
is related to the characteristic polynomial by the following formula. 
\[
\Hilb(\OS^\bullet(\A), t)=(-t)^\ell\chi(\A, -\frac{1}{t}).
\]

\begin{example}
Let $\A$ be as in Example \ref{ex:A3}. 
Then, the Orlik-Solomon ideal is 
\[
\mathcal{I}(\A)=\langle e_{23}, e_{45}, e_{24}-e_{14}+e_{12}, 
e_{35}-e_{15}+e_{13}\rangle. 
\]
It is easily seen that $1\in\OS^0(\A), e_1, e_2, e_3, e_4, e_5\in 
\OS^1(\A)$ and 
\[
e_{25}, e_{34}, e_{12}, e_{24}, e_{13}, e_{35}\in\OS^2(\A)
\]
form a basis of $\OS^\bullet(\A)$. 
\end{example}

\subsection{Cohomology ring of the complement of complex arrangements} 

Let $\A=\{H_1, \dots, H_n\}$ be an arrangement in $V=\C^\ell$. 
Fix a defining equation $\alpha_i$ for each hyperplane. 
Then 
\[
\omega_i=\frac{d\log\alpha_i}{2\pi\sqrt{-1}}=
\frac{1}{2\pi\sqrt{-1}}\frac{d\alpha_i}{\alpha_i}
\]
is clearly a closed $1$-form on $M(\A)$ and gives an 
element in $H^1(M(\A), \Z)$. 

\begin{proposition}
\label{prop:OSPoin}
(Orlik-Solomon \cite{orl-sol}) 

(1) The map $e_i\longmapsto\omega_i$ 
induces an isomorphism 
$\OS^\bullet(\A)\stackrel{\simeq}{\longrightarrow}
H^\bullet(M(\A), \Z)$ of graded $\Z$-algebras. 

(2) The Poincar\'e polynomial of the complement $M(\A)$ is 
equal to $(-t)^\ell\chi(\A, -1/t)$. In other words, let 
$b_i=b_i(M(\A))$, then $\chi(\A, t)=t^\ell-b_1t^{\ell-1}+\cdots
+(-1)^\ell b_\ell$. 

\end{proposition}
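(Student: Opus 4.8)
The plan is to prove the two assertions of Proposition~\ref{prop:OSPoin} in sequence, deriving (2) from (1) together with the Hilbert-series formula already recorded for the Orlik--Solomon algebra. For part (1), I would first check that the map $e_i\mapsto\omega_i$ respects the Orlik--Solomon ideal, so that it descends to a well-defined algebra homomorphism $\OS^\bullet(\A)\to H^\bullet(M(\A),\Z)$. Concretely: if $I=\{i_1,\dots,i_p\}$ has $H_I=\emptyset$, then the subarrangement $\{H_{i_1},\dots,H_{i_p}\}$ has empty intersection, and a short computation (a partial-fractions / residue identity, or the vanishing of the corresponding product of forms on a space of too-small dimension) shows $\omega_{i_1}\wedge\cdots\wedge\omega_{i_p}=0$ in $H^\bullet(M(\A))$; if $I$ is dependent, then there is a linear relation $\sum c_k\alpha_{i_k}=$ const among the defining forms, and the Brieskorn-type identity
\[
\sum_{\alpha=1}^p(-1)^{\alpha-1}\,\omega_{i_1}\wedge\cdots\wedge\widehat{\omega_{i_\alpha}}\wedge\cdots\wedge\omega_{i_p}=0
\]
holds, which is exactly the image of $\partial e_I$. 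Hence the homomorphism is defined.

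Next I would prove surjectivity and injectivity. Surjectivity would follow by induction on $n$ via the deletion--restriction triple $(\A,\A'=\A\setminus\{H_n\},\A''=\A^{H_n})$: the inclusion $M(\A)\hookrightarrow M(\A')$ and the geometry near $H_n$ give a Gysin/Mayer--Vietoris long exact sequence relating $H^\bullet(M(\A))$, $H^\bullet(M(\A'))$, and $H^{\bullet-1}(M(\A''))$, which one checks matches the split short exact sequence of Orlik--Solomon algebras $0\to\OS^\bullet(\A')\to\OS^\bullet(\A)\to\OS^{\bullet-1}(\A'')\to 0$ coming from the deletion--restriction decomposition of broken-circuit bases. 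Running the induction, the $\nbc$ (no-broken-circuit) monomials $\{\,\omega_S : S \text{ a no-broken-circuit set}\,\}$ span $H^\bullet(M(\A),\Z)$. For injectivity it suffices to show these same monomials are linearly independent in cohomology; this can be done by exhibiting dual homology cycles (e.g. iterated products of loops around hyperplanes, or via the known total Betti number), or again inductively from the same exact sequences, using that $\dim_\Q H^\bullet(M(\A),\Q)$ equals the number of no-broken-circuit sets.

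For part (2), once (1) is established I would simply combine it with the formula $\Hilb(\OS^\bullet(\A),t)=(-t)^\ell\chi(\A,-1/t)$ stated earlier: since $b_i(M(\A))=\rank_\Z H^i(M(\A),\Z)=\rank_\Z\OS^i(\A)$, the Poincar\'e polynomial $\Poin(M(\A),t)=\sum_i b_i t^i$ equals $\Hilb(\OS^\bullet(\A),t)=(-t)^\ell\chi(\A,-1/t)$. Substituting and reading off coefficients gives $\chi(\A,t)=t^\ell-b_1t^{\ell-1}+\cdots+(-1)^\ell b_\ell$, which is the stated identity.

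The main obstacle is the inductive comparison of the two long exact sequences in part (1): one must set up the deletion--restriction geometry carefully (choosing a tubular neighborhood of $H_n\cap M(\A')$, identifying its complement-in-$M(\A')$ with $M(\A)$, and computing the connecting Gysin map as "multiplication by $\omega_n$ then restrict"), and then verify that this map is compatible with the algebraic splitting of Orlik--Solomon algebras on no-broken-circuit generators. The form-level identities (well-definedness) and the final coefficient bookkeeping in part (2) are routine by comparison; establishing that the geometric and algebraic exact sequences are isomorphic as sequences — hence that the induced map on cohomology is an isomorphism in every degree over $\Z$, not merely rationally — is where the real work lies.
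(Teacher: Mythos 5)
The paper itself offers no proof of this proposition: it is quoted as a classical theorem with a citation to Orlik--Solomon, and the surrounding text only uses it (together with Proposition \ref{prop:chicount}) to deduce Corollary \ref{prop:propM}. So there is no internal argument to compare yours against; judged on its own terms, your outline is the standard classical proof (Orlik--Solomon, and the account in Orlik--Terao's book). The well-definedness step is right: when $H_I=\emptyset$ the differentials $d\alpha_{i_1},\dots,d\alpha_{i_p}$ are linearly dependent over the constants, so $\omega_{i_1}\wedge\cdots\wedge\omega_{i_p}=0$ already as a form; and for dependent $I$ the Arnold--Brieskorn identity kills $\partial e_I$ --- though note the relation there must be $\sum_k c_k\alpha_{i_k}=0$ exactly, not merely constant, which is guaranteed precisely because the paper's definition of dependent requires $H_I\neq\emptyset$ (for two parallel lines, with $\sum c_k\alpha_{i_k}$ a nonzero constant, $\partial(\omega_1\wedge\omega_2)=\omega_2-\omega_1$ is nonzero even in cohomology). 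Your inductive comparison of the Gysin sequence of $M(\A'')=H_n\cap M(\A')$ inside $M(\A')$ with the deletion--restriction sequence $0\to\OS^\bullet(\A')\to\OS^\bullet(\A)\to\OS^{\bullet-1}(\A'')\to 0$ is exactly the classical route, and the point you flag as the real work --- showing the long exact sequence splits into short exact sequences compatible with the algebraic one, proven simultaneously with surjectivity of $\OS^\bullet\to H^\bullet$ by induction --- is indeed the heart of that proof, so your proposal is a faithful but compressed reconstruction rather than a complete argument. Two small cautions: proving injectivity by matching ranks against the number of no-broken-circuit sets is only legitimate if you compute those Betti numbers independently of part (2) (otherwise it is circular), and part (2) as you derive it rests on the Hilbert-series identity $\Hilb(\OS^\bullet(\A),t)=(-t)^\ell\chi(\A,-1/t)$, which the paper also states without proof and which requires its own (combinatorial, broken-circuit or deletion--restriction) argument.
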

Combining Proposition \ref{prop:OSPoin} and 
Proposition \ref{prop:chicount}, we have the following. 
\begin{corollary}
\label{prop:propM}
Let $\A=\{H_1, \dots, H_n\}$ be an essential 
arrangement in $\R^\ell$. Then 
\[
\begin{split}
\#\ch(\A)&=\sum_{i=0}^\ell b_i(M(\A)), \\
\#\bch(\A)&=\sum_{i=0}^\ell (-1)^{\ell-i} b_i(M(\A)), 
\end{split}
\]
\end{corollary}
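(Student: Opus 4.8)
The plan is to combine the two main enumerative results just stated. From Proposition \ref{prop:OSPoin}(2), the Poincar\'e polynomial of $M(\A)$ satisfies $\Poin(M(\A), t) = \sum_{i=0}^\ell b_i(M(\A)) t^i = (-t)^\ell \chi(\A, -1/t)$. The strategy is simply to substitute the two values $t = -1$ and $t = 1$ into the relation $\chi(\A, t) = t^\ell - b_1 t^{\ell-1} + \cdots + (-1)^\ell b_\ell$ and compare with Zaslavsky's formulas in Proposition \ref{prop:chicount}(1).

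\begin{proof}
By Proposition \ref{prop:OSPoin}(2), writing $b_i = b_i(M(\A))$, we have
\[
\chi(\A, t) = \sum_{i=0}^\ell (-1)^i b_i \, t^{\ell - i}.
\]
Evaluating at $t = -1$ gives
\[
\chi(\A, -1) = \sum_{i=0}^\ell (-1)^i b_i (-1)^{\ell - i}
= (-1)^\ell \sum_{i=0}^\ell b_i.
\]
By Zaslavsky's formula (Proposition \ref{prop:chicount}(1)), $\#\ch(\A) = (-1)^\ell \chi(\A, -1) = \sum_{i=0}^\ell b_i$, which is the first identity. Next, evaluating at $t = 1$ gives
\[
\chi(\A, 1) = \sum_{i=0}^\ell (-1)^i b_i.
\]
Since $\A$ is essential, the second part of Proposition \ref{prop:chicount}(1) applies, so
\[
\#\bch(\A) = (-1)^\ell \chi(\A, 1) = (-1)^\ell \sum_{i=0}^\ell (-1)^i b_i
= \sum_{i=0}^\ell (-1)^{\ell - i} b_i,
\]
which is the second identity.
\end{proof}

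There is essentially no obstacle here: the corollary is a formal consequence of the two cited propositions, and the only care needed is bookkeeping of the signs coming from the substitution $t \mapsto -1/t$ in the relation between $\Poin(M(\A), t)$ and $\chi(\A, t)$, together with the reindexing $i \leftrightarrow \ell - i$. The one hypothesis that must be invoked explicitly is essentiality of $\A$, which is exactly what guarantees that the bounded-chamber count equals $(-1)^\ell \chi(\A, 1)$; without it the second identity can fail.
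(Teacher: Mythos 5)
Your proof is correct and is exactly the argument the paper intends: the corollary is stated as an immediate combination of Proposition \ref{prop:OSPoin}(2) and Zaslavsky's formulas in Proposition \ref{prop:chicount}(1), and your sign bookkeeping at $t=-1$ and $t=1$ checks out. Nothing further is needed.
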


The defining equations $\alpha_1, \dots, \alpha_n$ give an 
affine linear map 
\begin{equation}
\label{eq:afflin}
\bm{\alpha}=(\alpha_1, \dots, \alpha_n): \C^\ell\longrightarrow\C^n
\end{equation}
which induces the map of complements 
\begin{equation}
\label{eq:tuple}
\bm{\alpha}|_{M(\A)}: M(\A)\longrightarrow(\C^\times)^n. 
\end{equation}
Note that $\A$ is essential if and only if $\bm{\alpha}$ is injective. 
The description of the cohomology ring in 
Proposition \ref{prop:OSPoin} implies the surjectivity 
of the pull-back of this map. More generally, we can prove 
injectivity (resp. surjectivity) of the induced map on the homology 
(resp. cohomology) for inclusions of affine spaces as follows. 
\begin{proposition}
\label{prop:inj}
Let 
$\A=\{H_1, \dots, H_n\}$ be an arrangement in $V=\C^\ell$, 
and $M=M(\A)$ be its complement. 
Let $F\subset V$ be a $k$-dimensional affine subspace which 
is not contained in $H_i$ for any $1\leq i\leq n$. Then 
the inclusion $\iota: F\cap M\hookrightarrow M$ induces 
the surjection $\iota^*: H^j(M, \Z)\rightarrow H^j(M\cap F, \Z)$ 
of cohomology groups and the injection 
$\iota_*: H_j(F\cap M, \Z)\hookrightarrow H_j(M, \Z)$ of 
homology groups ($j\geq 0$). If $F$ is generic with respect to 
$\A$ (and the hyperplane at infinity), 
it is isomorphic for $j\leq k$. 
\end{proposition}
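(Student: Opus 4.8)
The plan is to reduce the statement to two ingredients: the Lefschetz-type hyperplane theorem for affine varieties (in the form due to Hamm–Lê and Goresky–MacPherson), and the concrete description of $H^\bullet(M(\A),\Z)$ via the Orlik–Solomon algebra given in Proposition \ref{prop:OSPoin}. First I would observe that $M(\A)$ is a smooth affine variety of complex dimension $\ell$, hence has the homotopy type of a CW complex of real dimension $\le\ell$; this is what makes the one-sided (rather than two-sided) statement the natural one. Then, slicing by one affine hyperplane at a time, I would set up an induction on $k=\dim F$, where the inductive step compares $M$ with $M\cap H$ for a generic affine hyperplane $H$ and then with $M\cap F$ for the subspace $F\subset H$.

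For the surjectivity of $\iota^*$ on cohomology (equivalently injectivity of $\iota_*$ on homology) \emph{without} any genericity hypothesis, I would argue directly using the OS-description. The classes $\omega_{i_1}\wedge\cdots\wedge\omega_{i_j}$ span $H^j(M,\Z)$, and their restrictions to $F\cap M$ are exactly the analogous logarithmic forms for the restricted arrangement $\A'=\{H_i\cap F\mid H_i\not\supset F\}$ on $F\cong\C^k$; since $F\not\subset H_i$ for all $i$, every $\alpha_i|_F$ is a nonzero affine form, so $\iota^*\omega_i$ is the generator $\omega_i'$ of the corresponding class in $\OS^1(\A')$ — and $\OS^\bullet(\A')$ is spanned by products of these. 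Hence $\iota^*$ is surjective onto $\OS^\bullet(\A')\cong H^\bullet(F\cap M,\Z)$. Dualizing over $\Z$ (all these groups are free $\Z$-modules by Proposition \ref{prop:OSPoin}) gives the injectivity of $\iota_*$.

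For the last sentence — that $\iota^*$ and $\iota_*$ are isomorphisms in degrees $j\le k$ when $F$ is generic — I would invoke the affine Lefschetz hyperplane theorem: if $H$ is a generic affine hyperplane, the pair $(M,M\cap H)$ is $\ell$-connected, hence $H_j(M\cap H)\to H_j(M)$ is an isomorphism for $j\le\ell-2$ and surjective for $j=\ell-1$; combined with the injectivity just proved, it is an isomorphism for $j\le\ell-1$ and, by the dimension bound $\dim_\C(M\cap H)=\ell-1$, in fact for all $j\le\ell-1$. Iterating $\ell-k$ times — each intersection with a further generic hyperplane decreasing the dimension by one and the range of guaranteed isomorphism by one — yields that $H_j(F\cap M)\to H_j(M)$ is an isomorphism for $j\le k$, and dually for cohomology. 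The ``genericity with respect to $\A$ and the hyperplane at infinity'' is precisely the condition needed so that at each stage the slicing hyperplane is transverse to the (Whitney) stratification of the relevant projective closure, which is what the Lefschetz theorem requires; spelling out that this transversality persists under iteration is the one point that needs care.

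The step I expect to be the main obstacle is the genericity bookkeeping in the iterated Lefschetz argument: one must ensure that after slicing by $H_1,\dots,H_{\ell-k}$ the remaining space is still the complement of an arrangement to which the affine Lefschetz theorem applies, i.e.\ that genericity of $F$ with respect to $\A\cup\{H_\infty\}$ implies genericity of the partial slices at every stage. The rest — the OS-algebra computation of restrictions and the $\Z$-duality — is routine given Proposition \ref{prop:OSPoin}.
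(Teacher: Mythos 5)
Your proposal is correct and follows exactly the route the paper intends: the paper states this proposition without a formal proof, but the preceding sentence makes clear that surjectivity of $\iota^*$ is to be deduced from the Orlik--Solomon description (the cohomology of $M\cap F$ is generated in degree one by the restricted logarithmic forms $\iota^*\omega_i$), with injectivity of $\iota_*$ following by $\Z$-duality of free modules, and the generic case handled by iterating the affine Lefschetz theorem of Hamm--L\^e just as in the paper's proof of Proposition \ref{prop:min}. The only blemish is terminological: the pair $(M, M\cap H)$ is $(\ell-1)$-connected rather than $\ell$-connected, but the homological consequences you draw from it are the correct ones, so the argument stands.
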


\section{Chambers and adjacency relation}

Let $\A=\{H_1, \dots, H_n\}$ be an arrangement of 
affine hyperplanes in $\R^\ell$. For simplicity, we assume 
$\A$ is essential. In this section, we 
summarize the basic notions related to real stratifications. 

First let 
\[
\scF: F^0\subset F^1\subset\cdots\subset F^{\ell-1}
\subset F^\ell=\R^\ell 
\]
be a generic flag. Associated with the flag, let us define 
\begin{equation}
\label{eq:flagch}
\ch_{\scF}^k(\A)=\{C\in\ch(\A)\mid C\cap F^k\neq\emptyset, 
C\cap F^{k-1}=\emptyset\} 
\end{equation}
for $k=0, \dots, \ell$, where $F^{-1}=\emptyset$. Then we have 
the following. 
\begin{proposition}
\label{prop:chk}
(\cite[Proposition 2.3.2]{yos-lef}) 
\begin{equation}
\label{eq:232}
\#\ch_\scF^k(\A)=b_k(M(\A)). 
\end{equation}
(Figure \ref{fig:flag}). 
\begin{figure}[htbp]
\centering
\begin{tikzpicture}[scale=0.8]


\draw[thick, black] (0,6)  -- +(11,-5.5) node[below] {\small $H_1$}; 
\draw[thick, black] (0,4) -- +(8,-4) node[below] {\small $H_2$} ; 
\draw[thick, black] (6,0) node[below] {\small $H_3$} -- +(0,6); 
\draw[thick, black] (2,0) node[below] {\small $H_4$}-- +(0,6); 
\draw[thick, black] (0,0) node[below] {\small $H_5$} -- +(8,4); 
\draw[thick, black] (-3,0.5) node[below] {\small $H_6$} -- +(11,5.5); 

\filldraw[fill=black, draw=black] (-3,0.8) node[above] {$F^0$} circle (0.1);
\draw[thick, dashed, ->] (-3,0.8) -- (11,0.8) node[above] {$F^1$} ;

\draw (-2,2) node {$C_0$}; 
\draw (-1,0) node {$C_1$}; 
\draw (1,0) node {$C_2$}; 
\draw (4,0) node {$C_3$}; 
\draw (7,0) node {$C_4$}; 
\draw (9,0) node {$C_5$}; 
\draw (10,2) node {$C_6$}; 

\draw (2.5,2) node {$D_1$}; 
\draw (5.5,2) node {$D_2$}; 
\draw (4,3) node {$D_3$}; 
\draw (2.5,4) node {$D_4$}; 
\draw (5.5,4) node {$D_5$}; 
\draw (1,4.5) node {$D_6$}; 
\draw (7,4.5) node {$D_7$}; 
\draw (1,6) node {$D_8$}; 
\draw (4,6) node {$D_{9}$}; 
\draw (7,6) node {$D_{10}$}; 

\end{tikzpicture}
\caption{A flag and associate sets of chambers: 
$\ch_\scF^0(\A)=\{C_0\}, 
\ch_\scF^1(\A)=\{C_1, \dots, C_6\}, 
\ch_\scF^2(\A)=\{D_1, \dots, D_{10}\}$} 
\label{fig:flag}
\end{figure}
\end{proposition}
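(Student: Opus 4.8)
The plan is to prove that $\#\ch_{\scF}^k(\A) = b_k(M(\A))$ by constructing an explicit geometric decomposition of $M(\A)$ adapted to the flag $\scF$, and then comparing it with the known combinatorial expressions for the Betti numbers. The starting point is Corollary~\ref{prop:propM}: since $\sum_i b_i(M(\A)) = \#\ch(\A)$ and the chambers are partitioned as $\ch(\A) = \bigsqcup_{k=0}^{\ell}\ch_{\scF}^k(\A)$, it suffices to show that $\#\ch_{\scF}^k(\A) \leq b_k(M(\A))$ for each $k$ (equality in the sum then forces equality termwise), or alternatively to produce a direct bijection-level identity. I would instead aim for the stronger structural statement that $M(\A)$ admits a minimal CW structure (equivalently a handle decomposition, or a perfect Morse function) in which the number of $k$-cells equals $\#\ch_{\scF}^k(\A)$; this simultaneously proves \eqref{eq:232} and explains why the invariants match.

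First I would set up the stratification of $\R^\ell$ induced by $\A$ together with the flag $\scF$, using genericity of $\scF$ to ensure that each $F^k$ meets the strata transversally and that $F^{k-1}$ lies in general position relative to every chamber. The key device is to fix a generic linear projection or, better, a generic real polynomial/linear function $\varphi_k$ on each $F^k$ whose level sets sweep out $F^k$ starting from $F^{k-1}$; a chamber $C$ contributes to $\ch_{\scF}^k(\A)$ precisely when $C\cap F^k\neq\emptyset$ but $C\cap F^{k-1}=\emptyset$, which means $C\cap F^k$ is a nonempty region of the induced arrangement $\A\cap F^k$ that is "new" at level $k$. Next, following the Lefschetz-hyperplane-section philosophy behind Proposition~\ref{prop:inj}, I would proceed by induction on $\ell$: the inclusion $M(\A)\cap F^{\ell-1}\hookrightarrow M(\A)$ is, up to homotopy, obtained by attaching cells of dimension $\ell$, one for each chamber in $\ch_{\scF}^{\ell}(\A)$, because the complement of an essential arrangement fibers (after removing a generic hyperplane section) with controlled vanishing cycles. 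Concretely, the pair $(M(\A), M(\A)\cap F^{\ell-1})$ is $(\ell-1)$-connected, and $H_\ell$ of the pair is free of rank $\#\ch_{\scF}^{\ell}(\A)$ — this is the real-to-complex dictionary, where each bounded chamber of the "slice arrangement at infinity direction" gives one vanishing cycle.

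The main obstacle, and the heart of the argument, is establishing that the number of $\ell$-dimensional vanishing cycles in the Lefschetz degeneration of $M(\A)$ onto $M(\A)\cap F^{\ell-1}$ equals exactly $\#\ch_{\scF}^{\ell}(\A)$, rather than merely bounding it. I would handle this by identifying the vanishing cycles with bounded chambers of a one-dimensional-smaller real arrangement: choosing a generic affine functional $\ell_\ell$ vanishing on $F^{\ell-1}$, the critical points of $|\ell_\ell|$ (or of an associated Morse function $\sum \log|\alpha_i|$ twisted appropriately) on $M(\A)$ are in bijection with chambers $C$ such that $C\cap F^{\ell} \neq \emptyset$ and $C\cap F^{\ell-1}=\emptyset$, by a direct analysis of the gradient on the real locus. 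Combined with the inductive hypothesis applied to $\A\cap F^{\ell-1}$ inside $F^{\ell-1}$ (which is again essential and generic with respect to the truncated flag $F^0\subset\cdots\subset F^{\ell-1}$), the counts assemble: $b_k(M(\A)) = b_k(M(\A)\cap F^{\ell-1}) = \#\ch_{\scF}^{k}(\A\cap F^{\ell-1}) = \#\ch_{\scF}^{k}(\A)$ for $k<\ell$, using that $F^{k-1}=\emptyset$-conditioned chambers are detected already inside $F^{\ell-1}$, while for $k=\ell$ the vanishing-cycle count supplies the remaining term. I would cite \cite{yos-lef} for the precise minimality statement and the identification of attaching data, and remark that an alternative proof runs through the Orlik--Solomon description of Proposition~\ref{prop:OSPoin} together with a deletion-restriction induction mirroring Proposition~\ref{prop:add-del}, checking that both $\#\ch_{\scF}^k$ and $b_k$ satisfy the same recursion $f_k(\A) = f_k(\A\setminus\{H\}) + f_{k-1}(\A^H)$ under adding a hyperplane in general position with respect to $\scF$.
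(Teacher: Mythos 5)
Your inductive skeleton for the degrees $k<\ell$ --- genericity of $F^{\ell-1}$ gives $b_k(M(\A))=b_k(M(\A)\cap F^{\ell-1})$ and $\ch^k_\scF(\A)\cong\ch^k_\scF(\A\cap F^{\ell-1})$, so induction handles everything below the top --- is exactly the paper's. Where you genuinely diverge is the top degree $k=\ell$. The paper disposes of it by pure counting: Corollary~\ref{prop:propM} gives $\#\ch(\A)=\sum_k b_k(M(\A))$, the chambers meeting $F^{\ell-1}$ account for $\sum_{k<\ell}b_k$ by the inductive step, and the leftover chambers are precisely $\ch^\ell_\scF(\A)$, so their number is $b_\ell$ by subtraction --- no Morse theory, no Lefschetz attachment, no minimality. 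You instead route through the minimal CW structure: identify the $\ell$-cells attached along $M(\A)\cap F^{\ell-1}$ with critical points of a Morse function, and those critical points with the chambers missing $F^{\ell-1}$. That route is valid and buys much more (it is how the paper later, in \S 6, describes the cells and attaching maps), but it is substantially heavier than the statement requires, and one step is more delicate than you let on: ``a direct analysis of the gradient on the real locus'' only produces \emph{at least} one critical point in each chamber of $\ch_F(\A)$; to rule out extra critical points (off the real locus, or several per chamber) one needs the resolution of the Varchenko conjecture, i.e.\ the upper bound $\#\Crit(\varphi)=(-1)^\ell\chi(M\setminus F)=b_\ell(M)$, which you should cite explicitly if you take this path. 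Note finally that your own opening observation (termwise inequality plus equality of the sums forces termwise equality) already shows the Morse-theoretic apparatus is unnecessary here: once your induction settles $k<\ell$, the top term is forced by the sum identity, which is exactly the paper's two-line conclusion.
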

\begin{proof}
The proof is by induction on $\ell$. If $\ell=1$, clearly we have 
$\#\ch_\scF^0(\A)=1, \#\ch_\scF^1(\A)=n-1$, which 
are equal to $b_0(M(\A))$ and $b_1(M(\A))$, respectively. 
Suppose $\ell>1$. Since the subspace $F^{\ell-1}$ is generic, 
$L(\A\cap F^{\ell-1})$ is isomorphic to 
\[
L(\A)_{\leq\ell-1}=\{X\in L(\A)\mid \codim X\leq\ell-1\}, 
\]
and we have $b_k(M(\A))=b_k(M(\A\cap F^{\ell-1}))$ for 
$0\leq k\leq\ell-1$. 
Then by Proposition \ref{prop:propM}, we have 
\[
\#\{C\in\ch(\A)\mid C\cap F^{\ell-1}\neq\emptyset\}=
\sum_{k=0}^{\ell-1}b_k(M(\A)). 
\]
Hence $\#\ch_\scF^\ell(\A)=b_\ell(M(\A))$. 
\end{proof}

\begin{remark}
\label{rem:flagnearinfty}
For simplicity, in this article, we pose an extra assumption 
on the flag. 
We consider only flags satisfying the following property: 
$F^{k-1}$ does not separates $0$-dimensional 
intersections of the arrangement $\A\cap F^k$. 
In other words, $F^{\ell-1}$ does not separates 
$0$-dimensional intersections of $\A$, $F^{\ell-2}$ 
does not separates $0$-dimensional intersections 
of the induced arrangement $\A\cap F^{\ell-1}$ on 
$F^{\ell-1}$, and so on. 
We call such a flag 
``near to the hyperplane at infinity.'' 
\end{remark}

Each hyperplane $H\in\A$ determines two regions. We choose 
a ``positive side'' for each $H\in\A$. This leads to 
a \emph{sign vector} $\sigma(x)\in\{\pm, 0\}^{\A}$ 
for every point $x\in\R^\ell$. The set of all points $x\in\R^\ell$ having 
the same sign vector forms a \emph{cell}. Every cell is relatively 
open and convex. The set $\R^\ell$ is decomposed into a disjoint 
union of cells. The cells form the \emph{face poset} 
$\scF(\A)$, ordered by the inclusion of their topological closures. 
Note that the maximal cells are the chambers of $\A$. 
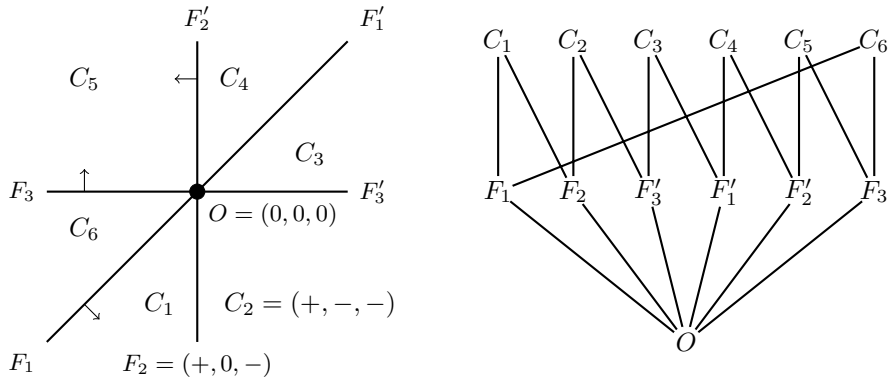
\begin{figure}[htbp]
\centering
\begin{tikzpicture}


\draw[thick] (0,0) node[anchor=north east] {\scriptsize $F_1$} -- ++(4,4)node[anchor=south west] {\scriptsize $F_1'$}; 
\draw[thick] (2,0) node[anchor=north] {\scriptsize $F_2=(+,0,-)$} -- ++(0,4) node[anchor=south] {\scriptsize $F_2'$}; 
\draw[thick] (0,2) node[anchor=east] {\scriptsize $F_3$}-- ++(4,0) node[anchor=west] {\scriptsize $F_3'$}; 

\draw[->] (0.5,0.5) -- ++(0.2, -0.2);
\draw[->] (0.5,2) -- ++(0, 0.3);
\draw[->] (2,3.5) -- ++(-0.3,0);

\filldraw[fill=black, draw=black] (2,2) node[anchor=north west] {\scriptsize $O=(0,0,0)$} circle (0.1); 

\draw (1.5,0.5) node {\footnotesize $C_1$}; 
\draw (3.5,0.5) node {\footnotesize $C_2=(+,-,-)$}; 
\draw (3.5,2.5) node {\footnotesize $C_3$}; 
\draw (2.5,3.5) node {\footnotesize $C_4$}; 
\draw (0.5,3.5) node {\footnotesize $C_5$}; 
\draw (0.5,1.5) node {\footnotesize $C_6$}; 

\coordinate (O) at (8.5,0);

\coordinate (F1) at (6,2);
\coordinate (F2) at (7,2);
\coordinate (F3p) at (8,2);
\coordinate (F1p) at (9,2);
\coordinate (F2p) at (10,2);
\coordinate (F3) at (11,2);

\coordinate (C1) at (6,4);
\coordinate (C2) at (7,4);
\coordinate (C3) at (8,4);
\coordinate (C4) at (9,4);
\coordinate (C5) at (10,4);
\coordinate (C6) at (11,4);

\draw[thick] (O)--(F1);
\draw[thick] (O)--(F2);
\draw[thick] (O)--(F3);
\draw[thick] (O)--(F1p);
\draw[thick] (O)--(F2p);
\draw[thick] (O)--(F3p);
\draw[thick] (F1)--(C1)--(F2)--(C2)--(F3p)--(C3)--(F1p)--(C4)--(F2p)--(C5)--(F3)--(C6)--cycle;

\filldraw[fill=white, draw=white]  (O) node {\footnotesize $O$} circle (0.2); 
\filldraw[fill=white, draw=white]  (F1) node {\footnotesize $F_1$} circle (0.2); 
\filldraw[fill=white, draw=white]  (F2) node {\footnotesize $F_2$} circle (0.2); 
\filldraw[fill=white, draw=white]  (F3p) node {\footnotesize $F_3'$} circle (0.2); 
\filldraw[fill=white, draw=white]  (F1p) node {\footnotesize $F_1'$} circle (0.2); 
\filldraw[fill=white, draw=white]  (F2p) node {\footnotesize $F_2'$} circle (0.2); 
\filldraw[fill=white, draw=white]  (F3) node {\footnotesize $F_3$} circle (0.2); 
\filldraw[fill=white, draw=white]  (C1) node {\footnotesize $C_1$} circle (0.2); 
\filldraw[fill=white, draw=white]  (C2) node {\footnotesize $C_2$} circle (0.2); 
\filldraw[fill=white, draw=white]  (C3) node {\footnotesize $C_3$} circle (0.2); 
\filldraw[fill=white, draw=white]  (C4) node {\footnotesize $C_4$} circle (0.2); 
\filldraw[fill=white, draw=white]  (C5) node {\footnotesize $C_5$} circle (0.2); 
\filldraw[fill=white, draw=white]  (C6) node {\footnotesize $C_6$} circle (0.2); 

\end{tikzpicture}
\caption{Sign vectors and the face poset} 
\label{fig:faceposet}
\end{figure}

Next, we define the adjacency graph of $\A$. 
\begin{definition}
For two chambers $C_1, C_2\in\ch(\A)$, 
let $\Sep(C_1, C_2)=\{H\in\A\mid
\mbox{$H$ separates $C_1$ and $C_2$}\}$ be 
the set of separating hyperplanes. $C_1$ and $C_2$ are 
said to be adjacent if $\#\Sep(C_1, C_2)=1$. 
The \emph{adjacency graph} of $\A$ is the graph $\Gamma(\A)$ 
with the vertex set $\ch(\A)$ and the edges corresponding to 
adjacent pairs of chambers (Figure \ref{fig:adjacencyg}). 
\end{definition}

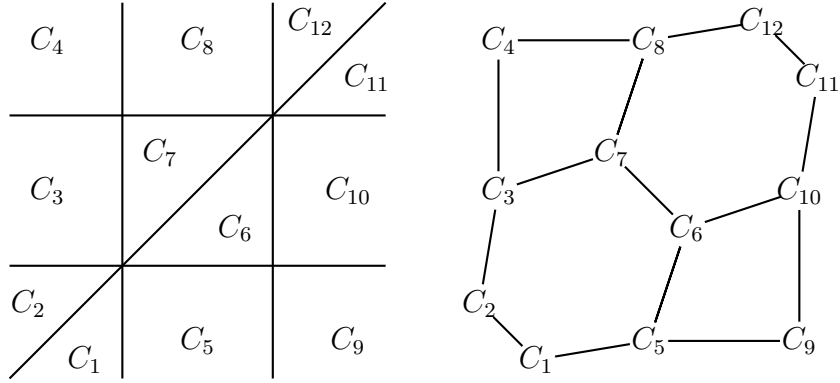
\begin{figure}[htbp]
\centering
\begin{tikzpicture}


\coordinate (C1) at (1.5,0.75);
\coordinate (C2) at (0.75,1.5);
\coordinate (C3) at (1,3);
\coordinate (C4) at (1,5);

\coordinate (C5) at (3,1);
\coordinate (C6) at (3.5,2.5);
\coordinate (C7) at (2.5,3.5);
\coordinate (C8) at (3,5);

\coordinate (C9) at (5,1);
\coordinate (C10) at (5,3);
\coordinate (C11) at (5.25,4.5);
\coordinate (C12) at (4.5,5.25);

\draw (C1) node {$C_1$}; 
\draw (C2) node {$C_2$}; 
\draw (C3) node {$C_3$}; 
\draw (C4) node {$C_4$}; 
\draw (C5) node {$C_5$}; 
\draw (C6) node {$C_6$}; 
\draw (C7) node {$C_7$}; 
\draw (C8) node {$C_8$}; 
\draw (C9) node {$C_9$}; 
\draw (C10) node {$C_{10}$}; 
\draw (C11) node {$C_{11}$}; 
\draw (C12) node {$C_{12}$}; 

\draw[thick] (0.5,0.5) -- ++(5,5); 
\draw[thick] (2,0.5) -- ++(0,5); 
\draw[thick] (4,0.5) -- ++(0,5); 
\draw[thick] (0.5,2) -- ++(5,0); 
\draw[thick] (0.5,4) -- ++(5,0); 


\coordinate (B1) at (7.5,0.75);
\coordinate (B2) at (6.75,1.5);
\coordinate (B3) at (7,3);
\coordinate (B4) at (7,5);

\coordinate (B5) at (9,1);
\coordinate (B6) at (9.5,2.5);
\coordinate (B7) at (8.5,3.5);
\coordinate (B8) at (9,5);

\coordinate (B9) at (11,1);
\coordinate (B10) at (11,3);
\coordinate (B11) at (11.25,4.5);
\coordinate (B12) at (10.5,5.25);

\draw[thick] (B1)--(B2)--(B3)--(B4)--(B8)--(B12)--(B11)--(B10)--(B9)--(B5)--(B1); 
\draw[thick] (B3)--(B7)--(B8)--(B7)--(B6)--(B5)--(B6)--(B10);

\filldraw[fill=white, draw=white]  (B1) node {$C_1$} circle (0.25); 
\filldraw[fill=white, draw=white]  (B2) node {$C_2$} circle (0.25); 
\filldraw[fill=white, draw=white]  (B3) node {$C_3$} circle (0.25); 
\filldraw[fill=white, draw=white]  (B4) node {$C_4$} circle (0.25); 
\filldraw[fill=white, draw=white]  (B5) node {$C_5$} circle (0.25); 
\filldraw[fill=white, draw=white]  (B6) node {$C_6$} circle (0.25); 
\filldraw[fill=white, draw=white]  (B7) node {$C_7$} circle (0.25); 
\filldraw[fill=white, draw=white]  (B8) node {$C_8$} circle (0.25); 
\filldraw[fill=white, draw=white]  (B9) node {$C_9$} circle (0.25); 
\filldraw[fill=white, draw=white]  (B10) node {$C_{10}$} circle (0.25); 
\filldraw[fill=white, draw=white]  (B11) node {$C_{11}$} circle (0.25); 
\filldraw[fill=white, draw=white]  (B12) node {$C_{12}$} circle (0.25); 

\end{tikzpicture}
\caption{Adjacency graph} 
\label{fig:adjacencyg}
\end{figure}

\section{Complexified complement} 
\label{sec:cpxf}
Let $x\in\R^\ell$ and $v\in T_x\R^\ell$ be a tangent vector 
at $x$. Since we can canonically identify 
$T_x\R^\ell\simeq\R^\ell$, we can regard $v$ as an element 
of $\R^\ell$. 
We identify the total space of the tangent bundle $T\R^\ell$ 
with $\C^\ell$ by the map: 
$v\in T_{x}\R^\ell\longmapsto x+\sqrt{-1}\cdot v\in\C^\ell$ 
(Figure \ref{fig:cpxf}). 

Let $H\subset V=\R^\ell$ be a hyperplane defined by a linear 
form $\alpha\in V^*$. Then the complex point 
$x+\sqrt{-1}\cdot v$ is contained in the 
complexified hyperplane $H_\C=H\otimes\C$ if and only if 
\[
\alpha(x+\sqrt{-1}\cdot v)=
\alpha(x)+\sqrt{-1}\cdot\alpha(v)=0, 
\]
which is equivalent to both the real part ($x$) and the imaginary part 
($v$) being contained in $H$. 

Let $\A=\{H_1, \dots, H_n\}$ be a central arrangement in 
$V=\R^\ell$. Using the above description, one can describe the 
complexified complement $M(\A)=\C^\ell\setminus
\bigcup_{i=1}^n H_\C$ as follows (see also Figure \ref{fig:cpxf}). 
\[
\{x+\sqrt{-1}\cdot v\mid \mbox{ if 
$x\in H_i$ for some $1\leq i\leq n$, then }
v\notin T_{x}H_i\}. 
\]
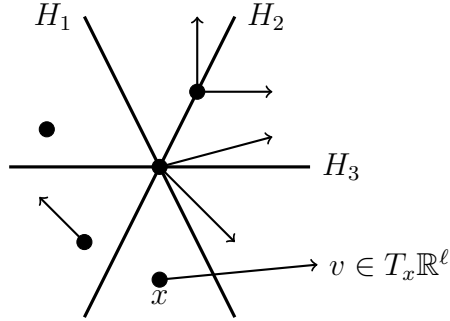
\begin{figure}[htbp]
\centering
\begin{tikzpicture}


\draw[very thick] (0,2)--(4,2) node[right] {$H_3$};
\draw[very thick] (1,0)--(3,4) node[right] {$H_2$};
\draw[very thick] (3,0)--(1,4) node[left] {$H_1$};

\filldraw[fill=black, draw=black] (0.5, 2.5) circle (0.1);

\filldraw[fill=black, draw=black] (1,1) circle (0.1);
\draw[thick, ->] (1,1)--++(-0.6,0.6);

\filldraw[fill=black, draw=black] (2,0.5) node[below] {${x}$} circle (0.1);
\draw[thick, ->] (2,0.5)--++(2.1,0.2) node[right] {${v}\in T_{x}\R^\ell$};

\filldraw[fill=black, draw=black] (2,2) circle (0.1);
\draw[thick, ->] (2,2)--++(1.5,0.4);
\draw[thick, ->] (2,2)--++(1,-1);

\filldraw[fill=black, draw=black] (2.5,3) circle (0.1);
\draw[thick, ->] (2.5,3)--++(1,0);
\draw[thick, ->] (2.5,3)--++(0,1);

\end{tikzpicture}
\caption{Vectors in the complexified complement.} 
\label{fig:cpxf}
\end{figure}

\begin{example}
Let $S_1$ be the set of tangent vectors of a circle in $\R^2$ 
(left of Figure \ref{fig:embcirc}), and 
$S_2$ be the set of radial vectors of a circle in $\R^2$ 
(right of Figure \ref{fig:embcirc}). Then $S_1$ gives a nontrivial 
cycle in the complement of the complexified line $H$, while 
$S_2$ gives a trivial cycle. 
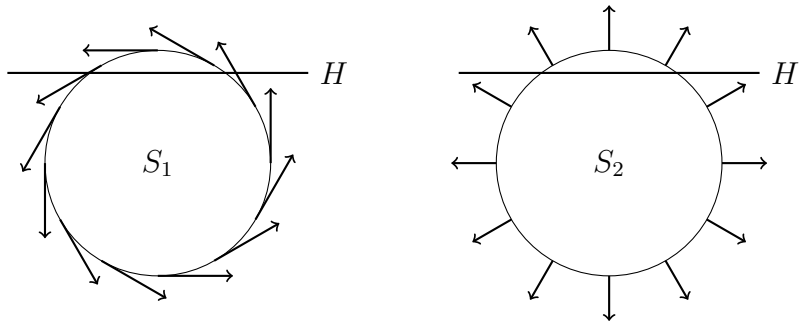
\begin{figure}[htbp]
\centering
\begin{tikzpicture}


\draw[thin] (1.5,1.5) node {$S_1$} circle (1.5);
\draw[thick, ->] (1.5,1.5) ++(0:1.5) -- ++(90:1);
\draw[thick, ->] (1.5,1.5) ++(30:1.5) -- ++(120:1);
\draw[thick, ->] (1.5,1.5) ++(60:1.5) -- ++(150:1);
\draw[thick, ->] (1.5,1.5) ++(90:1.5) -- ++(180:1);
\draw[thick, ->] (1.5,1.5) ++(120:1.5) -- ++(210:1);
\draw[thick, ->] (1.5,1.5) ++(150:1.5) -- ++(240:1);
\draw[thick, ->] (1.5,1.5) ++(180:1.5) -- ++(270:1);
\draw[thick, ->] (1.5,1.5) ++(210:1.5) -- ++(300:1);
\draw[thick, ->] (1.5,1.5) ++(240:1.5) -- ++(330:1);
\draw[thick, ->] (1.5,1.5) ++(270:1.5) -- ++(0:1);
\draw[thick, ->] (1.5,1.5) ++(300:1.5) -- ++(30:1);
\draw[thick, ->] (1.5,1.5) ++(330:1.5) -- ++(60:1);
\draw[thick] (-0.5,2.7) -- ++(4,0) node[right] {$H$};

\draw[thin] (7.5,1.5) node {$S_2$}  circle (1.5);
\draw[thick, ->] (7.5,1.5) ++(0:1.5) -- ++(0:0.6);
\draw[thick, ->] (7.5,1.5) ++(30:1.5) -- ++(30:0.6);
\draw[thick, ->] (7.5,1.5) ++(60:1.5) -- ++(60:0.6);
\draw[thick, ->] (7.5,1.5) ++(90:1.5) -- ++(90:0.6);
\draw[thick, ->] (7.5,1.5) ++(120:1.5) -- ++(120:0.6);
\draw[thick, ->] (7.5,1.5) ++(150:1.5) -- ++(150:0.6);
\draw[thick, ->] (7.5,1.5) ++(180:1.5) -- ++(180:0.6);
\draw[thick, ->] (7.5,1.5) ++(210:1.5) -- ++(210:0.6);
\draw[thick, ->] (7.5,1.5) ++(240:1.5) -- ++(240:0.6);
\draw[thick, ->] (7.5,1.5) ++(270:1.5) -- ++(270:0.6);
\draw[thick, ->] (7.5,1.5) ++(300:1.5) -- ++(300:0.6);
\draw[thick, ->] (7.5,1.5) ++(330:1.5) -- ++(330:0.6);
\draw[thick] (5.5,2.7) -- ++(4,0) node[right] {$H$};

\end{tikzpicture}
\caption{Linked and unlinked embedded circles} 
\label{fig:embcirc}
\end{figure}
\end{example}

\section{Fundamental groups} 

\subsection{Commutator relation}

We start with two lines $\A=\{H_1, H_2\}$ in $\R^2$ 
(left of Figure \ref{fig:2lines}). 

\begin{figure}[htbp]
\centering
\begin{tikzpicture}


\draw[very thick] (0,0)node[below] {$H_2$}--(3,3) ;
\draw[very thick] (3,0)node[below] {$H_1$}--(0,3) ;
\draw[thick, dashed] (-0.5,0.5) node[left] {$F$} -- ++(4,0); 

\draw[thick] (5,0)--++(5,0)--++(0,3)--++(-5,0) node[anchor=north west] {\footnotesize $F\otimes\C$}--cycle;

\draw[thick] (6.5, 1.5) circle (0.07); 
\draw[thick] (8.5, 1.5) circle (0.07); 
\draw (6.5,2.1) node {\footnotesize $H_2\cap F$};
\draw (8.5,2.1) node {\footnotesize $H_1\cap F$};

\draw[thick] (6.5, 1.5) circle (0.35); 
\draw[thick, ->] (6.5,1.85)--(6.49,1.85);
\draw (7.1,1.5) node {\footnotesize $\gamma_2$};
\draw[thick] (8.5, 1.5) circle (0.35); 
\draw[thick, ->] (8.5,1.85)--(8.49,1.85);
\draw (9.1,1.5) node {\footnotesize $\gamma_1$};

\draw[thick] (6.5,1.15)--(7.5,0)--(8.5,1.15);
\filldraw[fill=black, draw=black] (7.5,0) circle (0.08);

\end{tikzpicture}
\caption{Two lines $\A=\{H_1, H_2\}$.} 
\label{fig:2lines}
\end{figure}
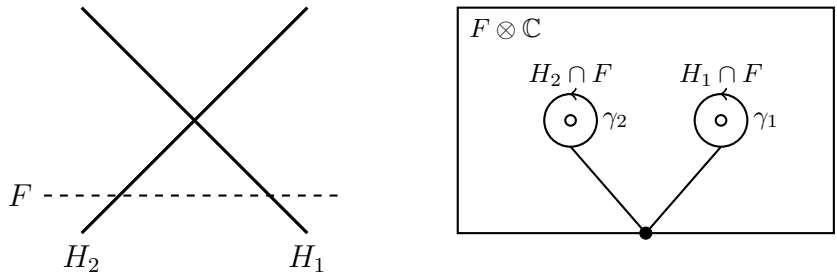
Then the complement $M(\A)$ is homeomorphic to 
\[
\{(z_1, z_2)\in\C^2\mid z_1\neq 0, z_2\neq 0\}=
\C^\times\times\C^\times. 
\]
Hence, $M(\A)$ is homotopy equivalent to the $2$-torus $(S^1)^2$. 
The fundamental group is abelian and isomorphic to $\Z^2$. 
To describe the generators, we first fix a generic line $F$ 
and take meridians as in the right of Figure \ref{fig:2lines}. 
In particular, $\gamma_1$ and $\gamma_2$ are commutative. 

Next, we consider the line arrangement as in Figure \ref{fig:mlines}. 
This arrangement can be considered as a small perturbation 
of the above $\A=\{H_1, H_2\}$. Hence, the commutativity 
of $\gamma_1$ and $\gamma_2$ is preserved. 
\begin{figure}[htbp]
\centering
\begin{tikzpicture}


\draw[thick] (-0.2,0)--(3.4,3) ;
\draw[thick] (0,0)--(3,3) ;
\draw[thick] (0.15,0)--(3.15,3) ;

\draw[thick] (3.2,0)--(-0.3,3) ;
\draw[thick] (2.7,0)--(0.2,3) ;
\draw[thick] (3,0)--(0,3) ;
\draw[thick] (3,0)--(0,3) ;

\draw[thick, dashed] (-0.5,0.5) node[left] {$F$} -- ++(4,0); 

\draw[thick] (5,0)--++(5,0)--++(0,3)--++(-5,0) node[anchor=north west] {\footnotesize $F\otimes\C$}--cycle;

\draw[thick] (6.2, 1.5) circle (0.06); 
\draw[thick] (6.5, 1.5) circle (0.06); 
\draw[thick] (6.8, 1.5) circle (0.06); 

\draw[thick] (8.2, 1.5) circle (0.06); 
\draw[thick] (8.5, 1.5) circle (0.06); 
\draw[thick] (8.8, 1.5) circle (0.06); 


\draw[thick] (6.5, 1.5) circle (0.5); 
\draw[thick, ->] (6.5,2)--(6.49,2) node[above] {\footnotesize $\beta$};
\draw[thick] (8.5, 1.5) circle (0.5); 
\draw[thick, ->] (8.5,2)--(8.49,2) node [above] {\footnotesize $\alpha$};

\draw[thick] (6.5,1)--(7.5,0)--(8.5,1);
\filldraw[fill=black, draw=black] (7.5,0) circle (0.08);

\end{tikzpicture}
\caption{Commutative loops 
$\alpha\beta=\beta\alpha$} 
\label{fig:mlines}
\end{figure}
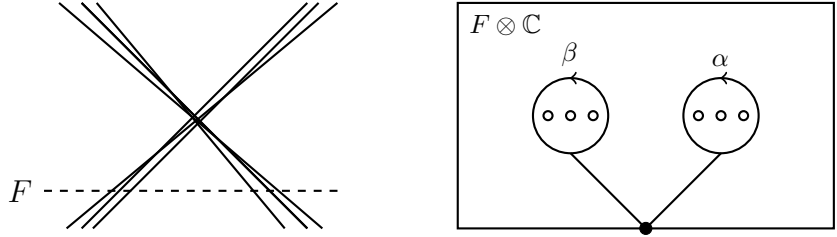
More systematically, we have the following. 

\begin{proposition}
Let $\A=\{H_1, \dots, H_n\}$ be an arrangement of 
$n$ lines intersecting at a point. Let $\gamma_1, \dots, \gamma_n$ 
be meridians as in Figure \ref{fig:cyclicrel}. Then for each $1\leq k< n$, 
$\gamma_1\gamma_2\dots\gamma_k$ and 
$\gamma_{k+1}\dots\gamma_n$ are commutative. 
In other words, we have the following cyclic relation. 
\begin{equation}
\label{eq:cyc}
\gamma_1\gamma_2\cdots\gamma_n=
\gamma_2\cdots\gamma_n\gamma_1=
\gamma_3\cdots\gamma_n\gamma_1\gamma_2=\cdots=
\gamma_n\gamma_1\cdots\gamma_{n-1}
\end{equation}
\begin{figure}[htbp]
\centering
\begin{tikzpicture}


\draw[thick] (3.7,0) node[below] {$H_1$} --(-0.7,3) ;
\draw[thick] (3,0) node[below] {$H_2$} --(0,3) ;
\draw[thick] (1.4,0) node[below] {$H_k$} --(1.6,3) ;
\draw[thick] (0,0) node[below] {$H_n$} --(3,3) ;

\draw (2.2,-0.35) node {$\dots$};
\draw (0.7,-0.35) node {$\dots$};

\draw[thick, dashed] (-0.2,0.5) node[left] {$F$} -- ++(4,0); 

\draw[thick] (5,0)--++(5,0)--++(0,3)--++(-5,0) node[anchor=north west] {\footnotesize $F\otimes\C$}--cycle;

\coordinate (P1) at (9,1.5); 
\coordinate (P2) at (8,1.5); 
\coordinate (Pn) at (6,1.5); 

\coordinate (Q1) at (9,1.85); 
\coordinate (Q2) at (8,1.85); 
\coordinate (Qn) at (6,1.85); 

\coordinate (R1) at (9,1.15); 
\coordinate (R2) at (8,1.15); 
\coordinate (Rn) at (6,1.15); 

\coordinate (B) at (7.5,0); 

\draw[thick] (P1) circle (0.06); 
\draw[thick] (P2) circle (0.06); 
\draw[thick] (Pn) circle (0.06); 

\draw[thick] (P1) circle (0.35); 
\draw[thick] (P2) circle (0.35); 
\draw[thick] (Pn) circle (0.35); 

\draw[thick, ->] (Q1) -- ++(-0.01,0) node[above] {$\gamma_1$}; 
\draw[thick, ->] (Q2) -- ++(-0.01,0) node[above] {$\gamma_2$}; 
\draw[thick, ->] (Qn) -- ++(-0.01,0) node[above] {$\gamma_n$}; 

\filldraw[fill=black, draw=black] (B) circle (0.08);

\draw[thick] (Rn)--(B)--(R2)--(B)--(R1);

\draw (7, 1.5) node {$\cdots$}; 

\end{tikzpicture}
\caption{Commutative relation $[\gamma_1\cdots\gamma_k, 
\gamma_{k+1}\cdots\gamma_n]=1$} 
\label{fig:cyclicrel}
\end{figure}
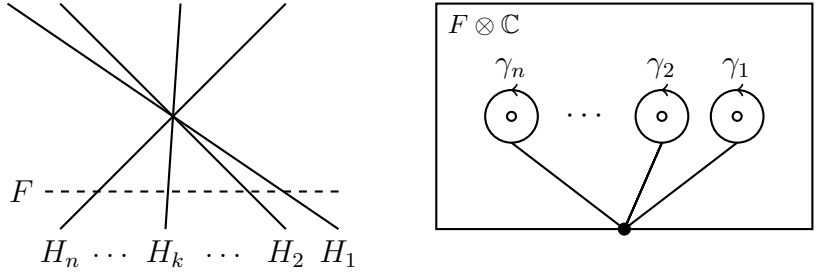
\end{proposition}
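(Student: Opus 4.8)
The plan is to reduce everything to the case of two lines, which was already handled geometrically in Figure~\ref{fig:2lines} and the discussion preceding it. The key observation is that the cyclic relation \eqref{eq:cyc} is equivalent to the family of commutator relations $[\gamma_1\cdots\gamma_k,\gamma_{k+1}\cdots\gamma_n]=1$ for $1\le k<n$: indeed, if $\gamma_1\cdots\gamma_k$ and $\gamma_{k+1}\cdots\gamma_n$ commute, then $(\gamma_1\cdots\gamma_k)(\gamma_{k+1}\cdots\gamma_n)=(\gamma_{k+1}\cdots\gamma_n)(\gamma_1\cdots\gamma_k)$, which says precisely that the $k$-th and a cyclically shifted word in \eqref{eq:cyc} agree; conjugating by the total product $\gamma_1\cdots\gamma_n$ then propagates the equality through the whole chain. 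So it suffices to prove, for each fixed $k$, that $\alpha:=\gamma_1\cdots\gamma_k$ and $\beta:=\gamma_{k+1}\cdots\gamma_n$ commute in $\pi_1(M(\A))$.

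To prove this single commutator relation, I would group the first $k$ lines and the last $n-k$ lines. Since all $n$ lines pass through a common point $p$, a small disc $D$ around $p$ in a generic complex line $F\otimes\C$ meets the arrangement in $n$ points clustered into two groups: the meridians $\gamma_1,\dots,\gamma_k$ encircle $k$ nearby points and their product $\alpha$ is freely homotopic to a single large loop $\gamma_{[1,k]}$ encircling all of them at once; similarly $\beta$ is homotopic to a loop $\gamma_{[k+1,n]}$ encircling the other cluster. The arrangement $\A$ is a small perturbation of the two-line arrangement $\A_0$ obtained by merging each cluster into one line $L_1$ (through the first $k$ directions) and $L_2$ (through the last $n-k$ directions); more precisely, by choosing the base point and the tail paths to the chosen base point of $\pi_1$ so that $\gamma_{[1,k]}$ and $\gamma_{[k+1,n]}$ are exactly the pulled-back meridians $\gamma_1,\gamma_2$ of $\A_0$, the inclusion-induced map $\pi_1(M(\A))\to\pi_1(M(\A_0))$ sends $\alpha\mapsto\gamma_1$, $\beta\mapsto\gamma_2$. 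Since $\pi_1(M(\A_0))\cong\Z^2$ is abelian (as computed above, $M(\A_0)\simeq\C^\times\times\C^\times$), the images commute. But this alone does not give commutativity in $\pi_1(M(\A))$ — one needs a genuine homotopy in the unperturbed total space.

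The correct mechanism is the following: restrict to a generic two-dimensional plane $P$ through $p$, or equivalently take the generic flag and the line $F^1=F$; then $\pi_1(M(\A))\cong\pi_1(M(\A)\cap(F\otimes\C))$ because $\A$ is central and a generic plane section computes $\pi_1$ (by Proposition~\ref{prop:inj} with $k=2$, or the classical Hamm--L\^e / Zariski theorem of Lefschetz type). So the whole question is about the complement of $n$ points in a disc, with its $n$ standard meridians, and the claim is purely about the braid/pure-braid structure. Concretely: enclose the first $k$ puncture points in a subdisc $D_1$ and the last $n-k$ in a disjoint subdisc $D_2$. Then $\gamma_1\cdots\gamma_k$ is homotopic in $D\setminus\{n\text{ points}\}$ to $\partial D_1$ and $\gamma_{k+1}\cdots\gamma_n$ to $\partial D_2$, up to conjugation by the tails. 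Two disjoint simple closed curves $\partial D_1$ and $\partial D_2$ bounding disjoint discs that contain all the punctures between them — actually $\partial D_1$ and $\partial D_2$ cobound an annulus containing no punctures in $D\setminus D_1\setminus D_2$ only if $D_1\sqcup D_2$ exhausts the punctures, which it does — hence $\partial D_1$ is homotopic to $\partial D_2^{-1}$ times $\partial D$, and in particular $\partial D_1$ and $\partial D_2$ commute (they are isotopic to boundary curves of an embedded pair of pants, all of whose boundary curves pairwise commute). Pulling back along the tails, $\alpha$ and $\beta$ commute.

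The main obstacle is bookkeeping with base points and tails: meridians $\gamma_i$ are only well-defined up to the choice of path from the base point, and the "homotopy of $\gamma_1\cdots\gamma_k$ to $\partial D_1$" must be carried out rel base point, which forces a compatible choice of tails so that the product $\gamma_1\cdots\gamma_k$ is genuinely conjugate (not just freely homotopic) to the based loop $\partial D_1$. The clean way to organize this is to choose the base point on $F$ near infinity with all tails running monotonically, so that $\gamma_1\cdots\gamma_n$ is the based loop around all punctures and each partial product $\gamma_1\cdots\gamma_k$ is literally a based loop encircling exactly the first $k$ punctures; then the commutator relation $[\gamma_1\cdots\gamma_k,\gamma_{k+1}\cdots\gamma_n]=1$ is the standard "two nested/side-by-side loops around disjoint puncture sets commute" fact in the free group $\pi_1(D\setminus\{n\text{ pts}\})=F_n$, proved by an explicit disc-filling (the loop around $D_1$ and the loop around the complement of $D_1$ in $D$ are each boundaries of embedded discs minus punctures, and their commutator bounds). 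Once the two-line model case and the base-point conventions are fixed, the general case is immediate by the perturbation remark already made in the text for Figure~\ref{fig:mlines}.
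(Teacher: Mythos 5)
Your reduction in the first paragraph (the chain \eqref{eq:cyc} is equivalent to the family of commutators $[\gamma_1\cdots\gamma_k,\gamma_{k+1}\cdots\gamma_n]=1$) is fine, and so is the identification of $\gamma_1\cdots\gamma_k$ with a based loop around a subdisc $D_1$ containing the first $k$ punctures. But the step where you actually prove the commutator relation is wrong. You restrict to the generic line section $F\otimes\C$ and claim the relation is "the standard fact in the free group $\pi_1(D\setminus\{n\ \text{pts}\})=F_n$." It is not: a free group has no nontrivial relations, and two elements of $F_n$ commute only if they are powers of a common element, which $\gamma_1\cdots\gamma_k$ and $\gamma_{k+1}\cdots\gamma_n$ are not. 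Likewise, the boundary curves of an embedded pair of pants do \emph{not} pairwise commute in $\pi_1$ of the punctured disc --- they generate a free group of rank $2$; you may be conflating the curves themselves with the Dehn twists about them (which do commute). The identity $\partial D_1=\partial D\cdot\partial D_2^{-1}$ that you write down only yields commutativity if $\partial D\simeq\gamma_1\cdots\gamma_n$ is central, and it is not central in the free group. Finally, the isomorphism $\pi_1(M(\A))\cong\pi_1(M(\A)\cap(F\otimes\C))$ you invoke is false for a generic \emph{line} $F$: the Lefschetz-type theorem gives an isomorphism on $\pi_1$ only for generic sections of complex dimension $\geq 2$; for a line section the map is merely surjective. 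If your argument were valid it would prove the cyclic relations inside $F_n$ itself, i.e.\ for \emph{every} line arrangement with the same line section, which is absurd.

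The relation is genuinely a two-complex-dimensional phenomenon tied to the lines being concurrent. The missing ingredient is precisely the centrality of the full product $\delta=\gamma_1\cdots\gamma_n$ in $\pi_1(M(\A))$: for a central arrangement of $n$ lines through the origin, $M(\A)$ is a trivial $\C^\times$-bundle over $\CP^1$ minus $n$ points, so $\pi_1(M(\A))\cong\Z\times F_{n-1}$ with the central $\Z$ generated by the fiber class, which is $\delta$. Once $\delta$ is central, $[\gamma_1\cdots\gamma_k,\gamma_{k+1}\cdots\gamma_n]=[\alpha,\alpha^{-1}\delta]=1$ for every $k$. (Equivalently, and closer to the text's informal discussion around Figures \ref{fig:2lines} and \ref{fig:mlines}: the loop around the subdisc $D_1$ can be swept around the central point by the $\C^\times$-action, exhibiting the commuting homotopy inside $\C^2$; this sweep is exactly what is unavailable in the line section alone.) You need to supply this fibration/centrality argument, or an explicit disc in $M(\A)\subset\C^2$ bounding the commutator; the punctured-disc picture by itself cannot produce it.
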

Roughly speaking, the cyclic relations (\ref{eq:cyc}) generate all 
relations in the fundamental group of $M(\A)$ for any 
arrangement. 

\subsection{Presentations}
\label{sec:pres}

Let $\A=\{H_1, \dots, H_n\}$ be an affine arrangement in 
$\R^\ell$. By a version of the Lefschetz hyperplane section theorem 
\cite{le-ham}, if $F$ is a generic affine subspace of complex dimension 
$2$, the inclusion $i: F\cap M(\A)\hookrightarrow M(\A)$ induces 
an isomorphism 
\[
i_*: 
\pi_1(F\cap M(\A))\stackrel{\simeq}{\longrightarrow}\pi_1(M(\A)). 
\]
Thus, when we consider the fundamental groups, 
it is enough to consider line arrangements in $\R^2$. 
There are many ways to give presentations of the fundamental group. 
Among others (e.g., \cite{cs-br, ikn, suc-fundam}), we give two of them. 

Now, let $\A=\{H_1, \dots, H_n\}$ be a line arrangement in $V=\R^2$. 
Let $F^0\subset F^1$ be a generic flag in $V$. Let $f(x)$ be a 
polynomial of degree one such that $F^1=\{f=0\}$, and $\alpha_i$ 
($i=1, \dots, n$) be the defining equation of $H_i$. Choose an 
orientation of $F^1$. 
For simplicity, we assume the following (Figure \ref{fig:flag}). 
\begin{itemize}
\item All intersections of $\A$ are contained in the half-space 
$\{f>0\}$. 
\item 
The point $F^0$ is contained in the half-space 
$H_i^{-}:=\{\alpha_i<0\}$ for $i=1, \dots, n$. 
\item 
On $F^1$, the order is 
$F^0<H_n\cap F^1<\cdots<H_2\cap F^1<H_1\cap F^1$, 
with respect to the orientation of $F^1$. 
\end{itemize}
The set of generators is 
\begin{equation}
\label{eq:basicgen}
\{\gamma_X\mid X \mbox{ is a $1$-dimensional face}\}. 
\end{equation}
At each point of multiplicity $m$, we impose the following 
relations (Figure \ref{fig:basicrel}). 
\begin{figure}[htbp]
\centering
\begin{tikzpicture}


\draw[thick] (0,0) node[below] {\footnotesize $X_m$} -- (6,4) node[above] {\footnotesize $Y_m$}; 
\draw[thick] (2.5,0) node[below] {\footnotesize $X_i$} -- (3.5,4) node[above] {\footnotesize $Y_i$}; 
\draw[thick] (5,0) node[below] {\footnotesize $X_2$} -- (1,4) node[above] {\footnotesize $Y_2$}; 
\draw[thick] (6,0) node[below] {\footnotesize $X_1$} -- (0,4) node[above] {\footnotesize $Y_1$} ; 
\draw (1.25,0) node[below] {$\cdots$}; 
\draw (3.75,0) node[below] {$\cdots$}; 
\draw (2.25,4) node[above] {$\cdots$}; 
\draw (4.75,4) node[above] {$\cdots$}; 

\end{tikzpicture}
\caption{$1$-dimensional faces around a point of multiplicity $m$}
\label{fig:basicrel}
\end{figure}
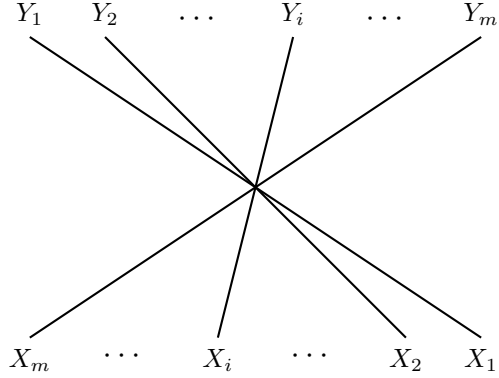
\begin{equation}
\label{eq:basicrel1}
\gamma_{Y_i}=(\gamma_{X_1}\cdots\gamma_{X_{i-1}})\gamma_{X_i}
(\gamma_{X_1}\cdots\gamma_{X_{i-1}})^{-1}, \mbox{ for } i=1, \dots, m, 
\end{equation}
\begin{equation}
\label{eq:basicrel2}
[\gamma_{X_1}\cdots\gamma_{X_i}, \gamma_{X_{i+1}}\cdots\gamma_{X_m}]=1, \mbox{ for } i=1, \dots, m-1. 
\end{equation}

\begin{theorem}
\label{thm:frpres}
(\cite{fal-hom, ran-fun}) For a line arrangement $\A=\{H_1, \dots, 
H_n\}$ in $\R^2$, the fundamental group $\pi_1(M(\A))$ is 
presented by the generators (\ref{eq:basicgen}) and relations 
(\ref{eq:basicrel1}) and (\ref{eq:basicrel2}). 
\end{theorem}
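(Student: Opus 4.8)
The plan is to realize $\pi_1(M(\A))$ via a sequence of geometric reductions, each of which is justified by results already in the excerpt, and then to identify the generators and relations with the braid-monodromy data read off around each multiple point. First I would use the Lefschetz-type isomorphism $\pi_1(F\cap M(\A))\cong\pi_1(M(\A))$ (already invoked in \S\ref{sec:pres}, via \cite{le-ham}) to reduce to the case $\ell=2$, so from now on $\A$ is a genuine line arrangement in $\R^2$ equipped with the normalized generic flag $F^0\subset F^1$ satisfying the three bulleted hypotheses. Next I would realize $F^1\otimes\C=\C$ as the base of the restriction of the projection $\C^2\to\C$ whose fibers are lines parallel to $F^1$; the line $L_0$ through $F^0$ meets the complement in $\C\setminus\{n\text{ points}\}$, whose fundamental group is free on the meridians $\gamma_{X}$ attached to the $1$-dimensional faces $X$ that $F^1$ crosses, ordered as prescribed. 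This is the free group on the generators (\ref{eq:basicgen}) before any relations are imposed.

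The core of the argument is then a van Kampen / Zariski-type analysis: as the fiber line sweeps across $F^1\otimes\C$, its $n$ puncture points move, and the braiding they undergo is concentrated at the finitely many parameter values over which a multiple point of $\A$ lies. I would fix small disjoint disks in the parameter plane around each such critical value, take the fiber over the center of such a disk near a multiplicity-$m$ point $p$, and compute the local monodromy of the $m$ strands that collide at $p$ (the other $n-m$ strands are unaffected). The key local model is exactly the central subarrangement of the $m$ lines through $p$, for which the cyclic/commutator relations were established in the previous Proposition (equations (\ref{eq:cyc}) and the relation $[\gamma_1\cdots\gamma_k,\gamma_{k+1}\cdots\gamma_m]=1$); transporting the standard fiber generators into the local fiber produces, after the substitution induced by the conjugating path, precisely the relations (\ref{eq:basicrel1}) expressing the ``far side'' generators $\gamma_{Y_i}$ as conjugates of the ``near side'' generators $\gamma_{X_i}$, together with the commutator relations (\ref{eq:basicrel2}). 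Glueing these local pictures along the base via the Seifert--van Kampen theorem — using that the generic flag is ``near to the hyperplane at infinity'' (Remark \ref{rem:flagnearinfty}), so that no monodromy is hidden at infinity and the complement of all the critical disks contributes nothing new — yields a presentation with generators (\ref{eq:basicgen}) and relations (\ref{eq:basicrel1})--(\ref{eq:basicrel2}), and a count of generators matching the total number of $1$-dimensional faces.

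The main obstacle is the bookkeeping in the local monodromy computation: one must verify that the conjugating path chosen to connect the base point to a loop around the critical value $p$ produces exactly the word $\gamma_{X_1}\cdots\gamma_{X_{i-1}}$ as the conjugator in (\ref{eq:basicrel1}), and not some other ordering — this is where the normalization of the flag (all intersections in $\{f>0\}$, $F^0$ on the negative side of every $H_i$, and the prescribed order of the $H_i\cap F^1$ along $F^1$) is doing all the work, and getting the orientation conventions consistent between the global sweep and the local model of $m$ concurrent lines is the delicate point. A secondary subtlety is checking that the relations obtained at distinct multiple points, when pushed to the common base fiber, are compatible and jointly suffice — i.e.\ that there are no ``global'' relations beyond the local ones; this again follows from the van Kampen setup once one knows the base is, up to homotopy, a wedge of the critical disks, which is where ``near to the hyperplane at infinity'' is essential. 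I would then remark that this presentation is equivalent to the classical Arvola/Randell--Falk braid-monodromy presentation, the references \cite{fal-hom, ran-fun} in the statement.
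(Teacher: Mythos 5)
The paper itself gives no proof of Theorem \ref{thm:frpres}; it is stated as a classical result with pointers to \cite{fal-hom, ran-fun}. Your Zariski--van Kampen sweep is exactly the route taken in those references, and the outline is sound: a pencil of parallel complex lines, the free group on the $n$ meridians in the base-point fiber, local full-twist monodromy at each multiple point giving the cyclic/commutator relations, and van Kampen to assemble them with no extra global relations because the flag is near the hyperplane at infinity. Three points deserve tightening. First, your setup conflates base and fiber: if the fibers of the projection are parallel to $F^1$, then $F^1\otimes\C$ is the base-point \emph{fiber}, not the base; the base is the transverse coordinate $f$. Second, the generating set (\ref{eq:basicgen}) consists of \emph{all} $1$-dimensional faces, whereas the base-point fiber only sees the $n$ unbounded faces in $\{f<0\}$; the relations (\ref{eq:basicrel1}) are therefore not consequences of the monodromy so much as \emph{definitions} of the remaining generators $\gamma_{Y_i}$, and one must check that these definitions, iterated along a line $H_i$ through successive multiple points, agree with the meridians obtained by transporting the fiber generator -- this compatibility of conjugating paths is the actual content of the theorem and is asserted rather than verified in your sketch. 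Third, note that the paper proves its \emph{other} presentation (Theorem \ref{thm:minpres}) by an entirely different mechanism, the minimal stratification of Proposition \ref{prop:minstr} from \cite{yos-str}; one could likewise derive Theorem \ref{thm:frpres} from the Salvetti complex, which trades the transcendental monodromy bookkeeping for combinatorics of the face poset. As a blueprint your argument is correct; as a proof it defers the one step that is genuinely delicate.
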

Note that one can reduce the number of generators to $n$ 
by using the relations (\ref{eq:basicrel1}). This gives a 
minimal presentation of the fundamental group. 

Next, we give a minimal presentation directly. The relations 
are indexed by 
\[
\ch_\scF^2(\A)=
\{C\in\ch(\A)\mid C\cap F^1=\emptyset\}. 
\]
To each $D\in\ch_\scF^2(\A)$ we will associate a relation. 
\begin{itemize}
\item
Let $\{i\in [n]\mid \alpha_i(D)<0\}=\{i_1, i_2, \dots, i_p\}$ with 
$i_1<\cdots<i_p$. 
\item
Let $\{i\in [n]\mid \alpha_i(D)>0\}=\{j_1, j_2, \dots, j_q\}$ with 
$j_1<\cdots<j_q$. 
\item 
Define the relation $R(D)$ as follows. 
\[
\gamma_1 \cdots \gamma_n=\gamma_{i_1} \cdots \gamma_{i_p}
\gamma_{j_1}\cdots\gamma_{j_q}
\]
\end{itemize}

\begin{theorem}
\label{thm:minpres}
(\cite{yos-str}) 
The fundamental group $\pi_1(M(\A))$ is 
presented as 
\begin{equation}
\label{eq:minpres}
\langle\gamma_1, \dots, \gamma_n\mid R(D), D\in\ch_{\scF}^2(\A)
\rangle
\end{equation}
\end{theorem}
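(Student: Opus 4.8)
The plan is to derive the minimal presentation \eqref{eq:minpres} from the Falk--Randell/Arvola-type presentation of Theorem~\ref{thm:frpres} by a careful bookkeeping of generators and relations, organized along the generic flag $\scF$. First I would recall that by Theorem~\ref{thm:frpres} the group $\pi_1(M(\A))$ is generated by the symbols $\gamma_X$, one for each $1$-dimensional face $X$ of $\A$, subject to the conjugation relations \eqref{eq:basicrel1} and the commutator relations \eqref{eq:basicrel2} at each multiple point. Among the $1$-dimensional faces, there are exactly $n$ that meet $F^1$ (one on each line $H_i$, namely the face through $H_i\cap F^1$); call the corresponding generators $\gamma_1,\dots,\gamma_n$, ordered so that $\gamma_i$ is the meridian around $H_i\cap F^1$ and the ordering along $F^1$ is as specified in the bullet list of \S\ref{sec:pres}. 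The relations \eqref{eq:basicrel1} at every multiple point express each remaining $\gamma_X$ as an explicit word in the $\gamma_i$'s: proceeding from the flag outward, one shows by induction on the distance from $F^1$ that every $1$-dimensional face $X$ has $\gamma_X$ conjugate to some $\gamma_i$ by a word $w_X$ in $\gamma_1,\dots,\gamma_n$, and that this word is determined by which half-spaces $H_j^\pm$ the face $X$ lies in. This is the standard reduction to $n$ generators mentioned right after Theorem~\ref{thm:frpres}.

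Once every generator is rewritten in terms of $\gamma_1,\dots,\gamma_n$, the task is to show that the resulting relations among $\gamma_1,\dots,\gamma_n$ are generated by the relations $R(D)$, $D\in\ch_\scF^2(\A)$. The key geometric input is Proposition~\ref{prop:chk}: the chambers in $\ch_\scF^2(\A)$ are exactly $b_2(M(\A))$ in number, which (for a line arrangement) is the number of independent relations in a minimal presentation, since $\pi_1(M(\A))/[\pi_1,\pi_1]\cong\Z^n$ forces exactly $b_2$ relations in any presentation with $n$ generators realizing a minimal CW-structure. So it suffices to check that each $R(D)$ is a consequence of \eqref{eq:basicrel1}--\eqref{eq:basicrel2}, and that these $R(D)$ are independent — independence then follows from the count. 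To produce $R(D)$ from the Falk--Randell relations, I would move the chamber $D$ down toward $F^1$ across a sequence of multiple points; at each crossing, the commutator relation \eqref{eq:basicrel2} at that point, together with the conjugation rewriting of the generators $\gamma_X$ attached to the faces bordering the crossing, transforms the trivial relation $\gamma_1\cdots\gamma_n=\gamma_1\cdots\gamma_n$ (valid when $D$ is adjacent to the region containing $F^0$) into the relation $R(D)$ for the original position. The sign data $\{i:\alpha_i(D)<0\}$ and $\{i:\alpha_i(D)>0\}$ records precisely the permutation of the word $\gamma_1\cdots\gamma_n$ accumulated along this path; that the word depends only on $D$ and not on the path is a consequence of the cyclic/commutator relations being compatible (this is essentially the associativity of the monodromy action, as in Salvetti's construction).

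The main obstacle I anticipate is the last point: verifying that the rewriting word $w_X$ attached to each face, and the word $\gamma_{i_1}\cdots\gamma_{i_p}\gamma_{j_1}\cdots\gamma_{j_q}$ attached to each chamber $D$, are genuinely well-defined — i.e., independent of the order in which one crosses multiple points when deforming $D$ toward the flag. This requires showing that two different deformation paths differ by a sequence of elementary moves each justified by a single commutator relation \eqref{eq:basicrel2}, which amounts to a small "confluence" or diamond-lemma argument on the poset of chambers ordered by separation from $F^0$. I would handle this by inducting on $\#\Sep(D,C_0)$, where $C_0$ is the chamber containing $F^0$, and using that any two hyperplanes separating $D$ from $C_0$ that are "adjacent in the crossing order" can be swapped using the commutator relation at their common intersection point (or commute freely if they do not meet inside $\{f>0\}$). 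Granting this well-definedness, the derivation of each $R(D)$ is routine, the independence follows from Proposition~\ref{prop:chk}, and Tietze transformations then identify \eqref{eq:minpres} with the presentation of Theorem~\ref{thm:frpres}.
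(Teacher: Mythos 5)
Your plan establishes only one of the two implications needed. Deriving each $R(D)$ from the Falk--Randell relations \eqref{eq:basicrel1}--\eqref{eq:basicrel2} shows that the normal closure of $\{R(D)\}$ is \emph{contained in} the full relation subgroup, i.e.\ that the group $\widetilde{G}$ presented by \eqref{eq:minpres} surjects onto $\pi_1(M(\A))$. To prove the theorem you must show the reverse containment: that the (reduced) Falk--Randell relations are consequences of the $R(D)$. The counting argument you invoke for this --- ``$\pi_1$ admits a presentation with $n$ generators and $b_2$ relations, the $R(D)$ number exactly $b_2$ and are independent, hence they suffice'' --- is not valid. Exhibiting $b_2$ relations that hold in a group of deficiency $n-b_2$ does not force them to present it: already $\langle a,b\mid [a,b]^2\rangle$ surjects onto $\Z^2=\langle a,b\mid[a,b]\rangle$ with the same number of generators and relations and an isomorphism on $H_1$ (and the single relation is ``independent'' in any homological sense), yet it is not $\Z^2$, since $[a,b]$ has order two there. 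The kernel of $\widetilde{G}\to\pi_1(M(\A))$ could be a perfect normal subgroup invisible to any Betti-number count, so Proposition~\ref{prop:chk} cannot close this gap. What is missing is the substantive half of the Tietze argument: an explicit derivation of the commutator relations \eqref{eq:basicrel2} at each multiple point (and the conjugation formulas \eqref{eq:basicrel1}) from the chamber relations $R(D)$.

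Note also that the paper takes a different route that sidesteps this issue: it reads the presentation off the minimal stratification of Proposition~\ref{prop:minstr}. There the codimension-one strata $S_1,\dots,S_n$ give the $n$ generators (a loop contributes $\gamma_i^{\pm1}$ each time it crosses $S_i$), the codimension-two strata are exactly the chambers $D\in\ch_{\scF}^2(\A)$, and the relation attached to $D$ is obtained by recording the order in which a small $2$-disc transverse to $D$ meets the $S_i$, which is precisely the word $R(D)$; since the stratification is dual to a minimal CW structure, its $2$-skeleton already carries $\pi_1$ and no further relations occur. If you prefer to keep the purely combinatorial Tietze-transformation route, the confluence argument you sketch in your last paragraph is a genuine but manageable point; the fatal omission is the reverse derivation described above, which cannot be replaced by counting.
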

This can be proved by using Proposition \ref{prop:minstr} in 
\S \ref{sec:min2dim}, see \cite{yos-str} for details. 

\begin{example}
\label{ex:X3pres}
Let $\A=\{H_1, \dots, H_6\}$ 
be the line arrangement as in 
Figure \ref{fig:flag} (see also Figure \ref{fig:chamberrel}). 
The presentation in Theorem \ref{thm:frpres} is reduced to 
the presentation 
\[
\left\langle
\gamma_1, \dots, \gamma_6
\left|
\begin{split}
&[\gamma_2, \gamma_3]=[\gamma_4, \gamma_5]=
[\gamma_2, \gamma_5]=\\
&[\gamma_1 \gamma_3, \gamma_5]=
[\gamma_1, \gamma_3 \gamma_5]=
[\gamma_2 \gamma_4, \gamma_6]=
[\gamma_2, \gamma_4 \gamma_6]=\\
&[\gamma_1, \gamma_6]=[\gamma_1, \gamma_2\gamma_4\gamma_2^{-1}]
=[\gamma_1\gamma_3\gamma_1^{-1}, \gamma_6]=1
\end{split}
\right.
\right\rangle. 
\]
The presentation in Theorem \ref{thm:minpres} is as follows. 
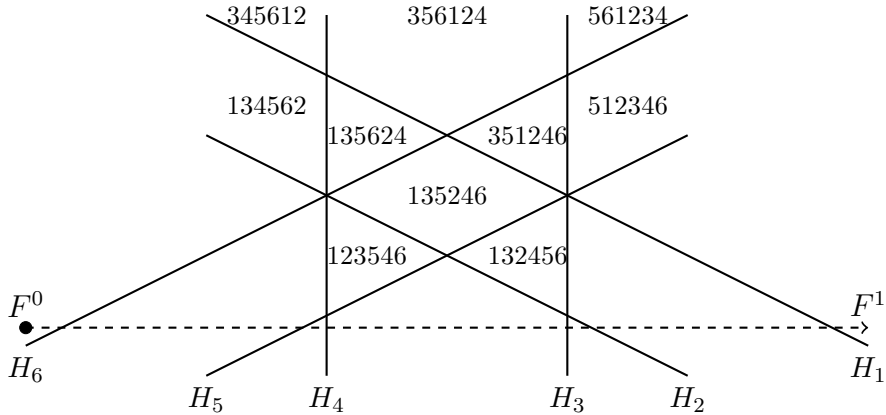
\begin{figure}[htbp]
\centering
\begin{tikzpicture}[scale=0.8]


\draw[thick, black] (0,6)  -- +(11,-5.5) node[below] {\small $H_1$}; 
\draw[thick, black] (0,4) -- +(8,-4) node[below] {\small $H_2$} ; 
\draw[thick, black] (6,0) node[below] {\small $H_3$} -- +(0,6); 
\draw[thick, black] (2,0) node[below] {\small $H_4$}-- +(0,6); 
\draw[thick, black] (0,0) node[below] {\small $H_5$} -- +(8,4); 
\draw[thick, black] (-3,0.5) node[below] {\small $H_6$} -- +(11,5.5); 

\filldraw[fill=black, draw=black] (-3,0.8) node[above] {$F^0$} circle (0.1);
\draw[thick, dashed, ->] (-3,0.8) -- (11,0.8) node[above] {$F^1$} ;

%

\draw (2.5,2) node {\footnotesize \ \ \ $123546$}; 
\draw (5.5,2) node {\footnotesize $132456$\ \ \ \ }; 
\draw (4,3) node {\footnotesize $135246$}; 
\draw (2.5,4) node {\footnotesize $\ \ \ 135624$}; 
\draw (5.5,4) node {\footnotesize $351246$\ \ \ \ }; 
\draw (1,4.5) node {\footnotesize $134562$}; 
\draw (7,4.5) node {\footnotesize $512346$}; 
\draw (1,6) node {\footnotesize $345612$}; 
\draw (4,6) node {\footnotesize $356124$}; 
\draw (7,6) node {\footnotesize $561234$}; 

\end{tikzpicture}
\caption{Chamber relations} 
\label{fig:chamberrel}
\end{figure}
\[
\left\langle
\gamma_1, \dots, \gamma_6
\left|
\begin{array}{l}
123456=\\
132456=123546=135246=135624=351246=\\
134562=345612=356124=561234=512346
\end{array}
\right.
\right\rangle
\]
\end{example}

\begin{remark}
Let us compare the fundamental groups of 
an arrangement $\A$ and its generic 
perturbation $\A'$. 
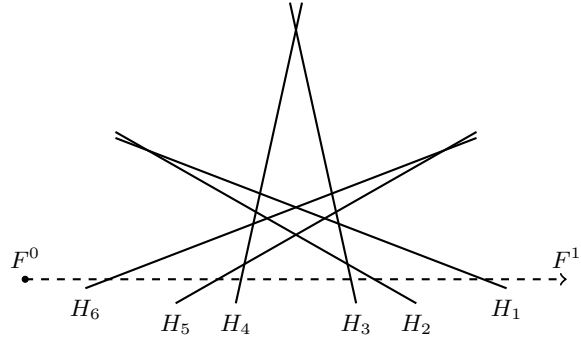
\begin{figure}[htbp]
\centering
\begin{tikzpicture}[scale=0.4]


\draw[thick, black] (11,0.5) node[below] {\scriptsize $H_1$} -- ++(-13,5) ; 
\draw[thick, black] (8,0 ) node[below] {\scriptsize $H_2$} -- ++(-10,5.7)  ; 
\draw[thick, black] (6,0) node[below] {\scriptsize $H_3$} -- ++(-2.2,10); 
\draw[thick, black] (2,0) node[below] {\scriptsize $H_4$}-- ++(2.2,10); 
\draw[thick, black] (0,0) node[below] {\scriptsize $H_5$} -- ++(10,5.7); 
\draw[thick, black] (-3,0.5) node[below] {\scriptsize $H_6$} -- ++(13,5); 

\filldraw[fill=black, draw=black] (-5,0.8) node[above] {\scriptsize$F^0$} circle (0.1);
\draw[thick, dashed, ->] (-5,0.8) -- (13,0.8) node[above] {\scriptsize$F^1$} ;

\end{tikzpicture}
\caption{A generic perturbation of Figure \ref{fig:chamberrel}} 
\label{fig:genper}
\end{figure}
The minimal homogeneous presentation 
of $\pi_1(M(\A'))$ is obtained from that of $\pi_1(M(\A))$ by 
adding several relations determined by the new chambers. 
Since $\pi_1(M(\A'))$ is isomorphic to the free abelian group 
$\Z^n$, we have that $\pi_1(M(\A))$ is a group generated by 
$n$ elements, and adding $\frac{n(n-1)}{2}-b_1$ relations 
makes it the free abelian group of rank $n$. 
\end{remark}

\begin{remark}
CW complexes associated with the minimal presentations in 
Theorem \ref{thm:frpres} and Theorem \ref{thm:minpres} 
are known to be homotopy equivalent to the complement 
$M(\A)$. A natural question is whether or not 
the CW complex associated with any minimal presentation 
of $\pi_1(M(\A))$ is homotopy equivalent to $M(\A)$. 
An affirmative answer to this question yields the following: 
let $\A_1$ and $\A_2$ be line arrangements in $V=\C^2$. 
If $\pi_1(M(\A_1))\simeq \pi_1(M(\A_2))$, then 
$M(\A_1)$ and $M(\A_2)$ are homotopy equivalent. 
\end{remark}

\subsection{Deligne groupoid}

In this section, we construct a category 
$\Gal^+(\A)$ of paths (galleries) of chambers and 
the Deligne groupoid $\Gal(\A)$. 
These were first introduced by 
Deligne \cite{del-kpi1} for simplicial arrangements and 
later Paris formulated for any real arrangements \cite{par-fun}. 

A path of length $k$ is a sequence $(C_0, C_1, \dots, C_k)$ 
of chambers such that $C_{i-1}$ and $C_i$ are adjacent 
($i=1, \dots, k$). 
A path $(C_0, \dots, C_k)$ is said to be 
\emph{geodesic} if $\#\Sep(C_0, C_k)=k$. 
We construct the category of galleries by identifying 
geodesics. 
\begin{definition}
Two paths $(C_0, \dots, C_k)$ and $(D_0, \dots, D_k)$ are 
related by a \emph{geodesic flip} if 
there exist indices $0\leq i<j\leq k$ such that 
\begin{itemize}
\item 
$C_0=D_0, \dots, C_i=D_i$ and $C_j=D_j$, \dots, $C_k=D_k$. 
\item 
both $(C_i, C_{i+1}, \dots, C_j)$ and $(D_i, D_{i+1}, \dots, D_j)$ 
are geodesic. We denote $\sim$ as the equivalence relation 
generated by finitely many geodesic flips. 
\end{itemize}
\end{definition}

\begin{figure}[htbp]
\centering
\begin{tikzpicture}

\draw[thick] (0,0)--++(6.5,6.5);
\draw[thick] (2,0)--++(0,6.5);
\draw[thick] (4,0)--++(0,6.5);
\draw[thick] (0,2)--++(7,0);
\draw[thick] (0,4)--++(7,0);
\draw[thick] (1.5,6.5)--++(5.5,-5.5);

\filldraw[thick, fill=red, draw=red] 
(0,1) circle (0.08) -- 
(1,3) circle (0.05) --
(2.3,3.7) circle (0.05) --
(2.3,4.3) circle (0.05) --
(3.7,5.7) circle (0.05) --
(4.3,5.7) circle (0.05) --
(5.5,5) circle (0.05) --
(5,6) circle (0.08);

\filldraw[thick, fill=olive, draw=olive] 
(0,0.6) circle (0.08) -- 
(1,2.6) circle (0.05) --
(2.6,3.3) circle (0.05) --
(3.4,2.7) circle (0.05) --
(4.6,2.7) circle (0.05) --
(5.4,3.3) circle (0.05) --
(5.8,5) circle (0.05) --
(5.5,6) circle (0.08);

\filldraw[thick, fill=blue, draw=blue] 
(0.3,0.6) circle (0.08) -- 
(1,0.3) circle (0.05) --
(3,0.3) circle (0.05) --
(6,1) circle (0.05) --
(6.5,1.75) circle (0.05) --
(6,3) circle (0.05) --
(6,5) circle (0.05) --
(5.8,6.2) circle (0.08);

\end{tikzpicture}
\caption{Geodesic flips and equivalent paths} 
\label{fig:geod}
\end{figure}
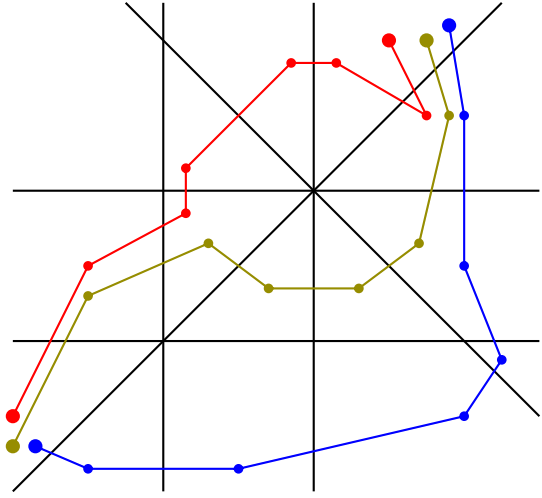

From the adjacency graph, one can define the following 
categories of galleries. 

\begin{definition}
$\Gal^+(\A)$ is the category such that 
\begin{itemize}
\item 
The set of objects is  
$\obj(\Gal^+(\A))=\ch(\A)$. 
\item 
The set of morphisms from $C$ to $C'$ is 
\[
\Hom^+(C, C')=
\left.
\left\{
(C_0, C_1, \dots, C_n)
\left|
\begin{array}{l}
n\geq 0, C_i\in\ch(\A), \\
C_0=C, C_n=C'
\end{array}
\right.\right\}
\right/
\sim,
\] 
where $\sim$ is the equivalence relation as above. 
\end{itemize}
\end{definition}
By inverting all morphisms (taking the free groupoid \cite{mac}), 
we obtain the category 
$\Gal(\A)$. To formulate the inverse of the morphisms, 
we consider signed paths 
\[
(C_0*_{\eps_1}C_1*_{\eps_2}\cdots *_{\eps_k}C_k), 
\]
where $\eps_i\in\{\pm\}$. If all the signs are $+$, we identify 
$(C_0*_{+}C_1*_{+}\cdots *_{+}C_k)$, with the 
path $(C_0, C_1, \dots, C_k)$. If all the signs are $-$, 
we identify 
$(C_0*_{-}C_1*_{-}\cdots *_{-}C_k)$, with the 
path $(C_k, C_{k-1}, \dots, C_1)^{-1}$. 

\begin{definition}
The category $\Gal(\A)$ is the category such that 
\begin{itemize}
\item 
The set of objects is  
$\obj(\Gal(\A))=\ch(\A)$. 
\item 
The set of morphisms from $C$ to $C'$ is 
\[
\Hom(C, C')=
\left.
\left\{
(C_0*_{\eps_1}C_1*_{\eps_2} \dots *_{\eps_n} C_n)
\left|
\begin{array}{l}
n\geq 0, C_i\in\ch(\A), \\
C_0=C, C_n=C'
\end{array}
\right.\right\}
\right/
\sim,
\] 
where $\sim$ is the equivalence relation generated by 
geodesic flips as well as the following: 
\[
\begin{split}
(\cdots *_{\eps_k}C_k*_{\eps_{k+1}}\cdots)
&\sim
(\cdots *_{\eps_k}C_k*_+ D*_- C_k*_{\eps_{k+1}}\cdots)\\
&\sim
(\cdots *_{\eps_k}C_k*_- D*_+ C_k*_{\eps_{k+1}}\cdots), 
\end{split}
\]
where $D$ is a chamber adjacent to $C_k$. 
\end{itemize}
\end{definition}
By the construction, all morphisms of $\Gal(\A)$ are 
isomorphisms, hence $\Gal(\A)$ is a groupoid. 
The Deligne groupoid $\Gal(\A)$ recovers the fundamental 
group of $M(\A)$. 
\begin{theorem}
\label{thm:aut}
Let $C\in\ch(\A)$. Then 
\begin{equation}
\label{eq:autopi1}
\Hom(C, C)\simeq\pi_1(M(\A), p_C), 
\end{equation}
where, $p_C$ is an arbitrary point in $C$. 
\end{theorem}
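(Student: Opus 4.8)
The plan is to construct an explicit functor between the Deligne groupoid $\Gal(\A)$ and the fundamental groupoid of $M(\A)$ based at a finite set of points (one $p_C$ per chamber), and then show it restricts to the claimed isomorphism on automorphism groups. First I would choose, for each chamber $C$, a base point $p_C\in C\subset\R^\ell\subset\C^\ell$ lying in the real part of the complexified complement $M(\A)$ (which is legitimate since a point of $\R^\ell$ not lying on any real $H$ gives a point of $M(\A)$). For each adjacent pair $(C,C')$ with $\Sep(C,C')=\{H\}$, I would construct a canonical homotopy class of path in $M(\A)$ from $p_C$ to $p_{C'}$: go straight toward the wall $H$, then take a small half-loop in the complex direction around $H_\C$ (using the identification of $T\R^\ell$ with $\C^\ell$ from \S\ref{sec:cpxf}), then continue straight into $C'$. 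This assigns to each morphism $(C_0,C_1,\dots,C_k)$ of $\Gal^+(\A)$ a homotopy class of path, and to the signed morphisms of $\Gal(\A)$ the corresponding composites with inverses.

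The key steps, in order, are: (i) show the assignment is well-defined on morphisms, i.e.\ that a geodesic flip does not change the homotopy class of the resulting path — this is the heart of the matter and reduces, by a standard localization/genericity argument, to the case of a pencil of lines through a point in $\R^2$, where the two geodesic routes around a point of multiplicity $m$ differ by exactly the relations \eqref{eq:cyc}, which hold in $\pi_1$; (ii) check that the extra relations defining $\Gal(\A)$ (insertion of $C_k*_+D*_-C_k$ and the sign conventions) map to trivial loops, which is immediate since $*_+$ and $*_-$ between adjacent chambers are assigned mutually inverse paths; (iii) conclude we have a well-defined functor $\Phi\colon\Gal(\A)\to\Pi_1(M(\A),\{p_C\})$; (iv) prove $\Phi$ is an equivalence of groupoids by showing it is essentially surjective (trivial: every chamber is hit, and $M(\A)$ is connected) and fully faithful. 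Fully faithful is where the real work lies: surjectivity of $\Phi$ on $\Hom$-sets follows because any loop in $M(\A)$ can be homotoped, using a generic projection to $\C$ or a generic flag, into a product of the elementary wall-crossing half-loops (this is essentially the same input used for the presentations in Theorem~\ref{thm:frpres} and Theorem~\ref{thm:minpres}); injectivity follows because every relation among these generators in $\pi_1(M(\A))$ is, by those presentation theorems, a consequence of the cyclic relations \eqref{eq:cyc}, which are precisely the geodesic-flip relations in $\Gal(\A)$. Restricting the equivalence $\Phi$ to $\Hom(C,C)$ then gives the isomorphism \eqref{eq:autopi1}.

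The main obstacle is step (i): verifying that $\Phi$ is insensitive to geodesic flips. One must show that if $(C_i,\dots,C_j)$ and $(D_i,\dots,D_j)$ are two geodesic galleries with the same endpoints, the associated paths are homotopic in $M(\A)$. The right tool is a localization argument: a minimal gallery between $C_i$ and $C_j$ crosses exactly the hyperplanes in $\Sep(C_i,C_j)$, each once; choosing $F$ a generic $2$-plane meeting the relevant faces, one reduces to a near-pencil configuration, and then to the pencil case handled by the cyclic relation. Making the ``straight segment plus complex half-loop'' paths precise enough that concatenations along a gallery are manifestly composable (and that the reduction to $2$ dimensions via the Lefschetz isomorphism $i_*\colon\pi_1(F\cap M(\A))\xrightarrow{\sim}\pi_1(M(\A))$ is compatible with the combinatorics) is the technical crux; everything else is bookkeeping or a citation to Theorem~\ref{thm:frpres}. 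I would also remark that this is essentially the theorem of Deligne \cite{del-kpi1} (simplicial case) and Paris \cite{par-fun} (general case), and in a survey one may legitimately present the above as a proof sketch and refer to \cite{par-fun} for the full details of step (i).
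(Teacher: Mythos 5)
Your proposal follows the same route as the paper's (deliberately brief) argument: assign to each signed gallery $(C_0*_{\varepsilon_1}\cdots*_{\varepsilon_k}C_k)$ a concatenation of elementary wall-crossing paths between the base points $p_{C_i}$, exactly as in Figure~\ref{fig:autopi1}, and check this induces the isomorphism \eqref{eq:autopi1}. You supply considerably more detail than the survey does — in particular the verification of invariance under geodesic flips via localization to a pencil and the use of the presentation theorems for full faithfulness — and your deferral of the technical crux to Paris \cite{par-fun} is the appropriate level of rigor here.
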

The strategy of the proof of Theorem \ref{thm:aut} is as follows. 
Choose a point $p_{C'}\in C'$ for each chamber $C'$. We give 
a map $\Hom(C, C)\longmapsto\pi_1(M(\A), p_C)$ by associating 
a curve in $M(\A)$ to a signed paths 
$(C=C_0*_{\varepsilon_1} C_1*_{\varepsilon_2}\cdots 
*_{\varepsilon_k}C_k=C)$ as in Figure \ref{fig:autopi1}. 
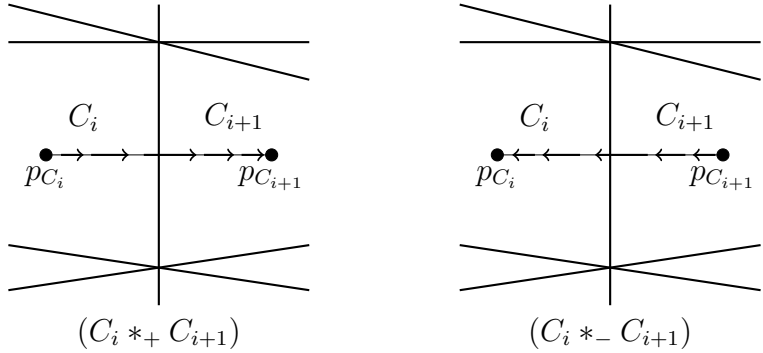
\begin{figure}[htbp]
\centering
\begin{tikzpicture}

\draw[thick] (2,0) node[below] {$(C_{i}*_+ C_{i+1})$} -- ++(0,4);
\draw[thick] (0,3.5) -- ++(4,0);
\draw[thick] (0,4) -- ++(4,-1);
\draw[thick] (0,0.2) -- ++(4,0.6);
\draw[thick] (0,0.8) -- ++(4,-0.6);
\draw (1,2.5) node {$C_{i}$}; 
\draw (3,2.5) node {$C_{i+1}$}; 
\filldraw[fill=black, draw=black] (0.5,2) node[below] {$p_{C_{i}}$} circle (0.08); 
\filldraw[fill=black, draw=black] (3.5,2) node[below] {$p_{C_{i+1}}$} circle (0.08); 
\draw[thick, ->] (0.7,2) -- ++(0.3,0);
\draw[thick, ->] (1.1,2) -- ++(0.5,0);
\draw[thick, ->] (1.8,2) -- ++(0.7,0);
\draw[thick, ->] (2.6,2) -- ++(0.4,0);
\draw[thick, ->] (3.1,2) -- ++(0.3,0);
\draw[very thin] (0.5,2)--++(3,0);

\draw[thick] (8,0) node[below] {$(C_{i}*_- C_{i+1})$} -- ++(0,4);
\draw[thick] (6,3.5) -- ++(4,0);
\draw[thick] (6,4) -- ++(4,-1);
\draw[thick] (6,0.2) -- ++(4,0.6);
\draw[thick] (6,0.8) -- ++(4,-0.6);
\draw (7,2.5) node {$C_{i}$}; 
\draw (9,2.5) node {$C_{i+1}$}; 
\filldraw[fill=black, draw=black] (6.5,2) node[below] {$p_{C_{i}}$} circle (0.08); 
\filldraw[fill=black, draw=black] (9.5,2) node[below] {$p_{C_{i+1}}$} circle (0.08); 
\draw[thick, <-] (6.7,2) -- ++(0.3,0);
\draw[thick, <-] (7.1,2) -- ++(0.5,0);
\draw[thick, <-] (7.8,2) -- ++(0.7,0);
\draw[thick, <-] (8.6,2) -- ++(0.4,0);
\draw[thick, <-] (9.1,2) -- ++(0.3,0);
\draw[very thin] (6.5,2)--++(3,0);

\end{tikzpicture}
\caption{Paths corresponding to $(C_{i}*_+ C_{i+1})$ and 
$C_{i}*_- C_{i+1}$.} 
\label{fig:autopi1}
\end{figure}
This correspondence gives the isomorphism (\ref{eq:autopi1}). 

\begin{remark}
Let $\A$ be a central and simplicial arrangement in $\R^\ell$. 
Deligne \cite{del-kpi1} proved that 
the localization functor $\rho:\Gal^+(\A)\longrightarrow\Gal(\A)$ 
is faithful. This property was one of the crucial steps in his 
proof of the $K(\pi, 1)$ property. 

However, in non simplicial cases, $\rho$ is not faithful 
\cite{yos-lef}. Non faithfulness seems to have a relationship 
with non vanishing of the second homotopy group \cite{sai-can}. 

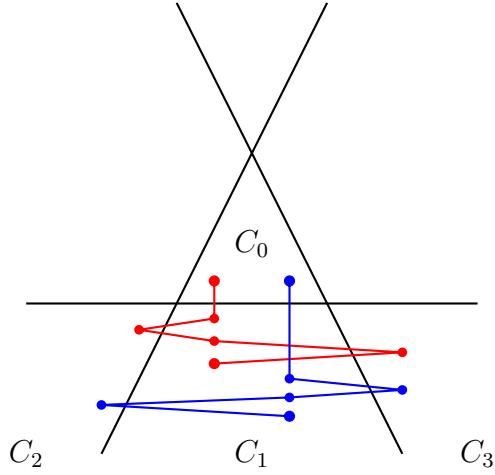
\begin{figure}[htbp]
\centering
\begin{tikzpicture}

\draw[thick] (1,0)--++(3,6);
\draw[thick] (5,0)--++(-3,6);
\draw[thick] (0,2)--++(6,0);

\draw (3,2.8) node {$C_0$}; 
\draw (3,0) node {$C_1$}; 
\draw (0,0) node {$C_2$}; 
\draw (6,0) node {$C_3$}; 

\filldraw[thick, fill=red, draw=red] 
(2.5,2.3) circle (0.06) -- 
(2.5,1.8) circle (0.05) --
(1.5,1.65) circle (0.05) --
(2.5,1.5) circle (0.05) --
(5,1.35) circle (0.05) --
(2.5,1.2)circle (0.06);

\filldraw[thick, fill=blue, draw=blue] 
(3.5,2.3) circle (0.06) -- 
(3.5,1) circle (0.05) --
(5,0.85) circle (0.05) --
(3.5,0.75) circle (0.05) --
(1,0.65) circle (0.05) --
(3.5,0.5)circle (0.06);

\end{tikzpicture}
\caption{$C_0C_1C_2C_1C_3C_1$ (red) and 
$C_0C_1C_3C_1C_2C_1$ (blue) are equivalent.} 
\label{fig:nonfaithful}
\end{figure}
\end{remark}

\section{Homotopy type and Minimality}

\subsection{Salvetti complex}
\label{subsec:sal}

Let $\A=\{H_1, \dots, H_n\}$ be an arrangement in $\R^\ell$. 
Choose a defining equation $\alpha_i$ for each hyperplane $H_i$. 
The hyperplane $H_i$ decomposes $\R^\ell$ into two half-spaces, 
$H_i^+=\{x\in\R^\ell\mid \alpha_i(x)>0\}$ and 
$H_i^-=\{x\in\R^\ell\mid \alpha_i(x)<0\}$. 
Each point $x\in\R^\ell$ determines a sign vector 
\[
(\sign(\alpha_i(x))\mid i=1, \dots, n)\in\{\pm, 0\}^n. 
\]
Let $X, Y:\A\longrightarrow \{\pm, 0\}$ be sign vectors. 
The \emph{composition} $X\circ Y:\A\longrightarrow \{\pm, 0\}$ 
of $X$ and $Y$ is defined by 
\[
(X\circ Y)(H)=
\begin{cases}
X(H), &\ \mbox{ if $X(H)\neq 0$}\\
Y(H), &\ \mbox{ if $X(H)= 0$}.
\end{cases}
\]
Let 
\[
\Sal(\A)=\{(X, C)\in\scF(\A)\times\ch(\A)\mid X\leq C\}. 
\]
Define the partial order on $\Sal(\A)$ as follows 
(Figure \ref{fig:Sal}): 
\[
(X', C')\leq (X, C) \mbox{ if and only if } X\leq X' 
\mbox{ and } X'\circ C=C'. 
\]
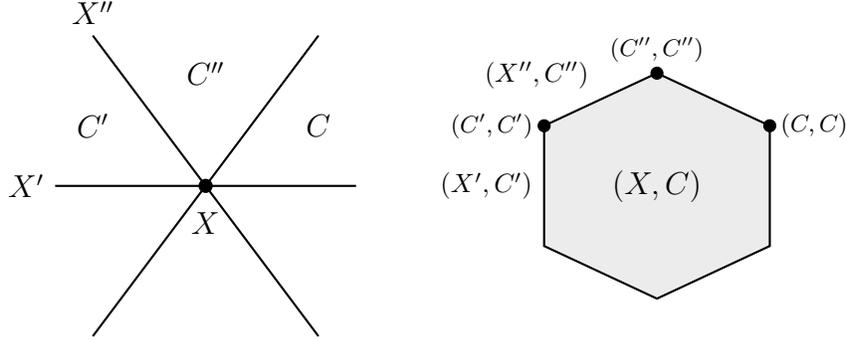
\begin{figure}[htbp]
\centering
\begin{tikzpicture}

\draw[thick] (1,2)--++(4,0);
\draw[thick] (1.5,0)--++(3,4);
\draw[thick] (4.5,0)--++(-3,4);

\filldraw[thick, fill=black, draw=black] (3,2) circle (0.08); 

\draw (3,1.5) node {$X$}; 
\draw (4.5,2.8) node {$C$}; 

\draw (1,2) node[left] {$X'$}; 
\draw (1.5,2.8) node {$C'$}; 

\draw (1.5,4) node[anchor=south] {$X''$}; 
\draw (3,3.5) node {$C''$}; 

\coordinate (Center) at (9,2); 
\coordinate (C1) at (10.5,2.8); 
\coordinate (C2) at (9,3.5); 
\coordinate (C3) at (7.5,2.8); 
\coordinate (C4) at (7.5,1.2); 
\coordinate (C5) at (9,0.5); 
\coordinate (C6) at (10.5,1.2); 

\filldraw[thick, fill=lightgray!30, draw=black] (C1)--(C2)-- node[anchor=south east] {\footnotesize $(X'', C'')$} (C3)-- node[anchor=east] {\footnotesize $(X', C')$} (C4)--(C5)--(C6)--cycle; 

\draw (Center) node {$(X, C)$}; 

\filldraw (C1) circle (0.08) node[anchor=west] {\scriptsize $(C, C)$}; 
\filldraw (C2) circle (0.08) node[anchor=south] {\scriptsize $(C'', C'')$}; 
\filldraw (C3) circle (0.08) node[anchor=east] {\scriptsize $(C', C')$}; 

\end{tikzpicture}
\caption{Salvetti complex} 
\label{fig:Sal}
\end{figure}
The Salvetti complex $\Delta_{\Sal}(\A)$ is 
the regular CW complex that has this $\Sal(\A)$  
as its face poset. 
\begin{theorem}
\label{thm:salcpx}
(\cite{sal-top, par-fun, del-com}) $\Delta_{\Sal}(\A)$ is 
homotopy equivalent to $M(\A)$. 
\end{theorem}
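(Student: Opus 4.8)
The plan is to construct an explicit deformation retraction from $M(\A)$ onto (a space carrying the combinatorics of) the Salvetti complex $\Delta_{\Sal}(\A)$, following the classical approach. First I would reduce to the central case: the key observation is that a point of $\C^\ell$ can be written as $x + \sqrt{-1}\, v$ with $x, v \in \R^\ell$, as in \S\ref{sec:cpxf}, and $x + \sqrt{-1}\, v \in M(\A)$ precisely when for every hyperplane $H_i$ containing $x$, the vector $v$ is not parallel to $H_i$. This description organizes $M(\A)$ according to the pair of faces $(X, Y) \in \scF(\A) \times \scF(\A)$, where $X$ is the face containing the real part $x$ and $Y$ the face containing the imaginary part $v$; the condition to be in the complement is that no hyperplane vanishes on both $X$ and $Y$, equivalently $X \circ Y$ and $X \circ (-Y)$ both determine the same chamber — in other words the cells of $M(\A)$ are indexed by pairs $(X, C)$ with $X \in \scF(\A)$, $C \in \ch(\A)$, and $X \leq C$, which is exactly $\Sal(\A)$.

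Next I would build the CW structure. For each pair $(X, C) \in \Sal(\A)$ I would produce an open cell in $M(\A)$ of dimension $\ell - \dim X$, by taking the real part $x$ to range over the (relatively open) face $X$ and the imaginary part $v$ to range over a suitable open neighborhood within the normal directions of $X$ biased toward the chamber $C$; concretely one lets $v$ vary over the intersection of the linear span of the normals to $X$ with a small open cone pointing into $C$. One checks that these cells are disjoint, cover $M(\A)$ up to deformation, and that the closure of the cell indexed by $(X, C)$ meets the cell indexed by $(X', C')$ exactly when $X \leq X'$ and $X' \circ C = C'$, reproducing the partial order defining $\Sal(\A)$. This is the content of the combinatorial verification and is where Salvetti's original argument, and Paris's and Delucchi's reformulations, do the real work.

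The main obstacle is the last step: showing that the inclusion of this CW complex into $M(\A)$ is a homotopy equivalence, i.e. that the complement deformation retracts onto it. The delicate point is controlling the imaginary part $v$ as the real part $x$ moves toward a lower-dimensional face: one must simultaneously push $x$ into the open chamber-like configuration and stretch $v$ so that the flow never leaves $M(\A)$, and the retraction must be continuous across the cell boundaries where the indexing face $X$ jumps. The cleanest way to handle this is via a Morse-theoretic or piecewise-linear vector field argument, or alternatively to invoke the nerve lemma after exhibiting a good cover of $M(\A)$ indexed by $\Sal(\A)$. I would also need to treat the affine (non-central) case, which reduces to the central one by coning — adding the hyperplane at infinity and passing to the projectivization — and to verify that the Salvetti poset is unchanged under this reduction. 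Since all three cited references \cite{sal-top, par-fun, del-com} carry out versions of exactly this program, I would present the construction in detail and cite them for the verification that the retraction is well-defined and continuous.
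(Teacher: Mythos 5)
Your program is sound, but your primary route is Salvetti's original one (embed an explicit CW complex in $M(\A)$ and deformation-retract onto it), whereas the paper takes the nerve-lemma route that you mention only in passing as an alternative. Concretely, the paper never constructs a retraction or an embedded complex. For each face $X\in\scF(\A)$ it takes the metric neighborhood $U_X=\{x\in V_X\mid d(x,X)<d(x,H_i)\}$, which is open and convex by Lemma \ref{lem:conv}, and for each pair $(X,C)\in\Sal(\A)$ the open convex set $Z_{(X,C)}=U_X+\sqrt{-1}\cdot\widetilde{C}$, where $\widetilde{C}$ is the full-dimensional cone of the localization $\A_X$ containing $C$. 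Lemma \ref{lem:Salchain} shows that these sets cover $M(\A)$ and that a subfamily has nonempty (automatically convex, hence contractible) intersection precisely when its indices form a chain in $\Sal(\A)$; the nerve lemma then identifies $M(\A)$ with the order complex of $\Sal(\A)$, i.e.\ the barycentric subdivision of $\Delta_{\Sal}(\A)$. This completely sidesteps what you correctly single out as the main obstacle --- controlling a flow as the real part crosses into lower-dimensional faces --- at the price of only identifying the homotopy type rather than exhibiting $\Delta_{\Sal}(\A)$ as a deformation retract.

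Two cautions about your construction as written. First, the pieces you describe (real part ranging over the relatively open face $X$, imaginary part over the normal directions of $X$ biased toward $C$) all have real dimension $\dim X+\codim X=\ell$, so they cannot serve as the cells of a regular CW structure with face poset $\Sal(\A)$: the cell indexed by $(X,C)$ must have dimension $\codim X$. Your sets are closer in spirit to the paper's open sets $Z_{(X,C)}$ (which are genuinely open in $M(\A)$, hence $2\ell$-dimensional) than to Salvetti's dual cells, and if you pursue the retraction route you need the actual dual-cell construction from \cite{sal-top}. Second, for the nerve-lemma argument no reduction to the central case by coning is needed; the cover above works verbatim for affine arrangements, with $\widetilde{C}$ taken in the central arrangement $\A_X$.
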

We illustrate the proof of Theorem \ref{thm:salcpx} via nerve lemma. 

\begin{lemma}
\label{lem:conv}
Let $H^+\subset\R^\ell$ be a half-space determined by a 
hyperplane $H$. Let $F\subset\R^\ell$ be an affine space. 
Let $d(x, F)$ be the Euclidean distance from a point $x$ to $F$. 
Then 
\[
U=\{x\in H^+\mid d(x, F)<d(x, H)\}
\]
is an open convex subset. 
\end{lemma}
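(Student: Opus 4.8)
The plan is to reduce the statement to a standard fact about the convexity of a region defined by a quadratic inequality, after cleanly parametrizing the competing distances. First I would set up coordinates. Choose the affine hyperplane $H=\{\alpha(x)=b\}$ with $\|\alpha\|=1$, so that $d(x,H)=|\alpha(x)-b|$, and on the half-space $H^+=\{\alpha(x)>b\}$ this is simply the affine function $d(x,H)=\alpha(x)-b\geq 0$. For the affine subspace $F$, write $F=p+W$ with $W$ a linear subspace and let $\pi_W$ denote orthogonal projection onto $W$; then $d(x,F)=\|(x-p)-\pi_W(x-p)\|=\|\pi_{W^\perp}(x-p)\|$, which is the norm of an affine (vector-valued) function of $x$. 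Thus on $H^+$ the set $U$ is
\[
U=\{x\in H^+\mid \|\pi_{W^\perp}(x-p)\|<\alpha(x)-b\}.
\]

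Next I would exploit that both sides are nonnegative on $H^+$, so the inequality is equivalent to the inequality of squares,
\[
\|\pi_{W^\perp}(x-p)\|^2<(\alpha(x)-b)^2,
\]
i.e. $q(x):=(\alpha(x)-b)^2-\|\pi_{W^\perp}(x-p)\|^2>0$. The key observation is that $q$ is a quadratic polynomial whose quadratic part is $Q(v):=\langle\alpha,v\rangle^2-\|\pi_{W^\perp}v\|^2$, a quadratic form of Lorentzian (signature $(1,k)$ or degenerate with a one-dimensional positive part) type: it is positive on the line $\R\alpha$ complement directions and nonpositive on $W^\perp\cap\alpha^\perp$. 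The region $\{q>0\}$ in $\R^\ell$ is then (an affine image of) the interior of a cone of the form $\{\,t^2>\|y\|^2\,\}$, i.e. a two-sheeted cone, and intersecting with the open half-space $H^+=\{\alpha(x)-b>0\}$ selects exactly one convex sheet. Concretely, on $H^+$ one may set $t=\alpha(x)-b>0$ and $y=\pi_{W^\perp}(x-p)$; the map $x\mapsto(t,y)$ is affine, and $U$ is the preimage of the open convex set $\{(t,y)\mid t>0,\ \|y\|<t\}$ (a round cone, clearly convex since it is the epigraph strict version $\|y\|<t$ of the convex function $y\mapsto\|y\|$). Preimages of convex sets under affine maps are convex, so $U$ is convex; and $U$ is open because it is defined by a strict inequality between continuous functions. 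That $U$ is nonempty (if one wants openness to be nonvacuous) is immediate since points far into $H^+$ in the $\alpha$-direction satisfy it, but for the statement only openness and convexity are asserted.

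The main obstacle — really the only subtle point — is justifying that squaring the inequality is legitimate and does not introduce the "wrong sheet": this works precisely because we have already restricted to $H^+$, where $\alpha(x)-b$ is strictly positive on $U$, so $\|\pi_{W^\perp}(x-p)\|<\alpha(x)-b$ with both sides $\geq 0$ is genuinely equivalent to the squared inequality together with $\alpha(x)-b>0$. Without the restriction to $H^+$ the set $\{q>0\}$ would be the union of two opposite convex cones and fail to be convex, so the role of the half-space is essential and should be emphasized. The remaining verifications — that $x\mapsto(\alpha(x)-b,\pi_{W^\perp}(x-p))$ is affine, that $\{(t,y):t>0,\|y\|<t\}$ is open and convex, and that affine preimages preserve both properties — are routine and I would state them without detailed computation.
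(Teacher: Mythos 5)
Your argument is correct, but it takes a different route from the paper. The paper's proof is coordinate-free and very short: it observes that $x\mapsto d(x,F)$ is a convex function (immediate from the triangle inequality) while $x\mapsto d(x,H)$ is affine on the half-space $H^+$, so for $x_1,x_2\in U$ and $0\le t\le 1$ one has $d(tx_1+(1-t)x_2,F)\le t\,d(x_1,F)+(1-t)\,d(x_2,F)<t\,d(x_1,H)+(1-t)\,d(x_2,H)=d(tx_1+(1-t)x_2,H)$, which is exactly convexity of $U$. You instead introduce coordinates, write $d(x,H)=\alpha(x)-b$ on $H^+$ and $d(x,F)=\lVert\pi_{W^\perp}(x-p)\rVert$, and realize $U$ as the preimage of the open convex cone $\{(t,y):\lVert y\rVert<t\}$ under the affine map $x\mapsto(\alpha(x)-b,\ \pi_{W^\perp}(x-p))$; this is valid (note that $\lVert y\rVert<t$ already forces $t>0$, so the preimage automatically lies in $H^+$), and it also delivers openness explicitly, which the paper's proof leaves implicit. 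Two remarks: the excursion through squaring and the Lorentzian form $q$ is dispensable, since your concrete affine-preimage argument never uses it, and retaining it only obliges you to rule out the wrong sheet, which you do correctly by restricting to $H^+$; and the two proofs rest on the same underlying fact, because convexity of the cone $\{\lVert y\rVert<t\}$ is again just convexity of the norm, so your route buys explicit formulas and openness at the cost of coordinates and the projection formula, while the paper's buys brevity.
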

\begin{proof}
Let $x_1, x_2\in \R^\ell$. 
From the triangle inequality, we can easily prove 
\[
d(tx_1 + (1-t)x_2, F)\leq td(x_1, F)+(1-t)d(x_2, F)
\]
for $0\leq t\leq 1$. On the other hand, if $x_1, x_2\in H^+$, then 
\[
d(tx_1 + (1-t)x_2, H)= td(x_1, H)+(1-t)d(x_2, H). 
\]
Thus $x_1, x_2\in U$ implies $tx_1+(1-t)x_2\in U$. 
\end{proof}
Let $F\in\scF(\A)$ be a face with the sign vector $\sigma(F)=
(\varepsilon_1, \dots, \varepsilon_n)\in\{+, -, 0\}^n$. 
Define the neighborhood 
$V_F$ as the intersection of half-spaces containing $F$, 
\[
V_F=\bigcap_{\varepsilon_i\neq 0}H^{\varepsilon_i}. 
\]
Next, define the subset $V_F$ as 
\[
U_F=\{x\in V_F\mid d(x, F)<d(x, H_i) \mbox{ for $1\leq i\leq n$ with } 
\varepsilon_i\neq 0\}
\]
(Figure \ref{fig:VFUF}). 
Note that for a chamber $C$, $U_C=C$. 
Since $U_F$ is an intersection of 
finitely many open convex subsets as in Lemma \ref{lem:conv}, 
$U_F$ is open and convex. 
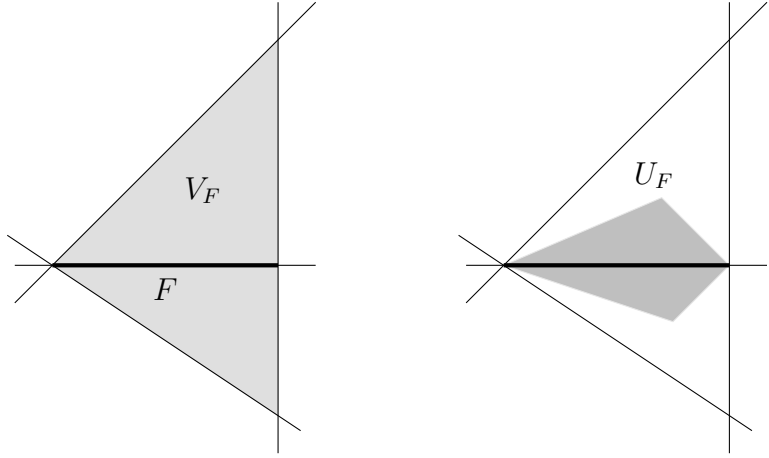
\begin{figure}[htbp]
\centering
\begin{tikzpicture}

\filldraw[fill=lightgray!50, draw=lightgray!50] (1,2)--(4,0)--(4,5)--cycle;
\draw (0.5, 2) -- ++(4,0);
\draw[ultra thick] (1, 2) -- node[below] {$F$} ++(3,0);
\draw (4, -0.5) -- ++(0,6);
\draw (0.5, 1.5) -- ++(4,4);
\draw (0.4, 2.4) -- ++(3.9,-2.6);
\draw (3,3) node {$V_F$}; 

\coordinate (I1) at (9.1,2.9); 
\coordinate (I2) at (9.25,1.25); 

\filldraw[fill=lightgray!100, draw=lightgray!40] (7,2)--(I1)--(10,2)--(I2)--cycle;
\draw (6.5, 2) -- ++(4,0);
\draw[ultra thick] (7, 2) -- ++(3,0);
\draw (10, -0.5) -- ++(0,6);
\draw (6.5, 1.5) -- ++(4,4);
\draw (6.4, 2.4) -- ++(3.9,-2.6);
\draw (9,3.2) node {$U_F$}; 

\end{tikzpicture}
\caption{$V_F$ and $U_F$} 
\label{fig:VFUF}
\end{figure}
The collection 
$\{U_F\mid F\in\scF(\A)\}$ provides an open covering of $\R^\ell$. 
We can also prove the following. 
\begin{lemma}
Let $F_1, F_2\in\scF(\A)$. Then 
$U_{F_1}\cap U_{F_2}\neq\emptyset$ if and only if 
$F_1$ and $F_2$ are comparable. 
\end{lemma}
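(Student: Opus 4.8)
The plan is to reduce everything to the combinatorics of sign vectors. Writing $\sigma(F)=(\varepsilon_1,\dots,\varepsilon_n)\in\{+,-,0\}^n$, recall that the order relation $F_1\leq F_2$ in $\scF(\A)$ (that is, $\overline{F_1}\subseteq\overline{F_2}$) is equivalent to the condition that the $k$-th entries satisfy $\sigma(F_1)_k\in\{0,\sigma(F_2)_k\}$ for every $k$; equivalently $\operatorname{supp}\sigma(F_1)\subseteq\operatorname{supp}\sigma(F_2)$ and the two sign vectors agree on $\operatorname{supp}\sigma(F_1)$. (This follows from the standard description $\overline{F}=\{x\mid \alpha_k(x)=0\text{ if }\sigma(F)_k=0,\ \sigma(F)_k\alpha_k(x)\geq0\text{ if }\sigma(F)_k\neq0\}$, valid because $F$ is relatively open and nonempty in its supporting subspace.) Hence $F_1$ and $F_2$ are comparable if and only if $\sigma(F_1)$ and $\sigma(F_2)$ agree wherever both are nonzero \emph{and} one of their supports contains the other. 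I will also use two elementary observations: first, $F\subseteq U_F$ (for $x\in F$ one has $d(x,F)=0$, while $d(x,H_k)>0$ for each $k$ with $\varepsilon_k\neq0$, and $x\in V_F$ automatically); second, $\sigma(F)_k=0$ is the same as $F\subseteq H_k$, and in that case $d(x,F)\geq d(x,H_k)$ for every $x$.

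For the implication ``comparable $\Rightarrow$ $U_{F_1}\cap U_{F_2}\neq\emptyset$'', assume without loss of generality $F_1\leq F_2$ and pick a point $p\in F_1$. Since $F_1\subseteq U_{F_1}$ and $U_{F_1}$ is open, some ball $B(p,\delta)$ is contained in $U_{F_1}$. As $p\in\overline{F_1}\subseteq\overline{F_2}$, the ball $B(p,\delta)$ meets $F_2$; choosing $q\in F_2\cap B(p,\delta)$ we get $q\in U_{F_1}$ (it lies in the ball) and $q\in U_{F_2}$ (because $F_2\subseteq U_{F_2}$), so the intersection is nonempty.

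For the converse, let $x\in U_{F_1}\cap U_{F_2}$. From $x\in V_{F_1}\cap V_{F_2}$ one reads off that $\sign(\alpha_k(x))=\sigma(F_1)_k$ whenever $\sigma(F_1)_k\neq0$, and likewise for $F_2$; in particular $\sigma(F_1)$ and $\sigma(F_2)$ agree at every index where both are nonzero. It remains to show their supports are nested. Suppose not: then there are indices $i,j$ with $\sigma(F_1)_i\neq0=\sigma(F_2)_i$ and $\sigma(F_2)_j\neq0=\sigma(F_1)_j$. From $\sigma(F_2)_i=0$ we get $F_2\subseteq H_i$, hence $d(x,F_2)\geq d(x,H_i)$, while $x\in U_{F_1}$ together with $\sigma(F_1)_i\neq0$ gives $d(x,F_1)<d(x,H_i)$; combining these, $d(x,F_1)<d(x,F_2)$. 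The symmetric argument applied to $j$ yields $d(x,F_2)<d(x,F_1)$, a contradiction. Therefore the supports are nested, and together with the agreement just noted this shows $F_1$ and $F_2$ are comparable.

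The argument has essentially no obstacle: the only point requiring a little care is the preliminary bookkeeping translating the closure order on $\scF(\A)$ into the sign-vector condition and recording the two elementary facts about $U_F$ and $V_F$ that drive both directions. Once those are in place, the ``if'' direction is a one-line perturbation argument and the ``only if'' direction is the short distance inequality above.
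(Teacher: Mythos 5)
Your proof is correct. Note that the paper states this lemma without proof (it appears after the remark ``We can also prove the following''), so there is no argument in the text to compare against; your reduction to sign vectors, the observation $F\subseteq U_F$ together with openness of $U_{F_1}$ for the ``if'' direction, and the two distance inequalities $d(x,F_1)<d(x,H_i)\leq d(x,F_2)$ and its symmetric counterpart for the ``only if'' direction form a complete and natural way to fill this gap.
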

Now we shall construct an open covering of $M(\A)$. 
Let $(X, C)\in\Sal(\A)$. Denote by $\widetilde{C}\in\ch(\A_X)$ 
the open convex cone corresponding to the unique chamber of 
$\A_X=\{H\in\A\mid H\supset X\}$ containing $C$. Note 
that $\widetilde{C}$ is an open convex cone. 
Associated with $(X, C)\in\Sal(\A)$, let us define 
\begin{equation}
Z_{(X, C)}:=U_X+\sqrt{-1}\cdot\widetilde{C}=\{x+\sqrt{-1}y\mid x\in U_X, y\in\widetilde{C}\}. 
\end{equation}
Then clearly, $Z_{(X, C)}\simeq U_X\times\widetilde{C}$ 
is an open convex subset in $M(\A)$. We can also prove 
the following. 
\begin{lemma}
\label{lem:Salchain}
\begin{itemize}
\item[(1)] 
If $(X_1, C_1)$ and $(X_2, C_2)$ are not comparable in $\Sal(\A)$, 
then $Z_{(X_1, C_1)}\cap Z_{(X_2, C_2)}=\emptyset$. 
\item[(2)] 
If $(X_1, C_1)\leq\cdots\leq (X_k, C_k)$ in $\Sal(\A)$. Then 
\[
Z_{(X_1, C_1)}\cap\cdots\cap Z_{(X_k, C_k)}=
\left(\bigcap_{i=1}^k U_{X_i}\right)\times \widetilde{C_k}. 
\]
In particular, 
$Z_{(X_1, C_1)}\cap\cdots\cap Z_{(X_k, C_k)}\neq\emptyset$ 
if and only if $(X_1, C_1), \dots, (X_k, C_k)$ form a chain in 
$\Sal(\A)$. 
\end{itemize}
\end{lemma}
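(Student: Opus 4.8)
\textbf{Proof proposal for Lemma \ref{lem:Salchain}.}

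The plan is to reduce both statements to the combinatorics of the sign-vector poset $\scF(\A)$ together with the description of $Z_{(X,C)}$ as the product $U_X\times\widetilde C$. First I would record the key observation that separates the real and imaginary directions: a point $x+\sqrt{-1}y$ lies in $Z_{(X,C)}$ if and only if $x\in U_X$ and $y$ lies in the open cone $\widetilde C$ of $\A_X$. Consequently, for any finite collection $(X_i,C_i)$, the intersection $\bigcap_i Z_{(X_i,C_i)}$ is exactly $\bigl(\bigcap_i U_{X_i}\bigr)\times\bigl(\bigcap_i \widetilde{C_i}\bigr)$, where in the second factor one must be careful that the cones $\widetilde{C_i}$ live a priori in different localized arrangements $\A_{X_i}$ but can all be viewed inside $\R^\ell$. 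So the whole lemma comes down to: (a) understanding when $\bigcap_i U_{X_i}\neq\emptyset$, which by the lemma preceding this one is precisely when the $X_i$ are pairwise comparable, hence (since $\scF(\A)$ is a poset in which a pairwise-comparable finite set is totally ordered) when they form a chain $X_{i_1}\le\cdots\le X_{i_k}$; and (b) identifying $\bigcap_i\widetilde{C_i}$ in that situation.

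For part (1), suppose $(X_1,C_1)$ and $(X_2,C_2)$ are incomparable in $\Sal(\A)$. I would split into cases according to whether $X_1,X_2$ are comparable as faces. If $X_1$ and $X_2$ are incomparable, then $U_{X_1}\cap U_{X_2}=\emptyset$ by the preceding lemma, so the real parts already force $Z_{(X_1,C_1)}\cap Z_{(X_2,C_2)}=\emptyset$. If instead $X_1,X_2$ are comparable, say $X_1\le X_2$ (the faces nested), then non-comparability in $\Sal(\A)$ must come from the chamber condition failing, i.e. $X_1\circ C_2\neq C_1$ (or the analogous relation with indices swapped). I would then show that $X_1\circ C_2\neq C_1$ forces $\widetilde{C_1}\cap\widetilde{C_2}=\emptyset$ as cones: the cone $\widetilde{C_2}\subset\R^\ell$ lies on a definite (strict) side of each hyperplane through $X_2$, hence through $X_1$, and $X_1\circ C_2$ records exactly which side; if this disagrees with $C_1$ on some hyperplane $H\supset X_1$, the two cones are separated by $H$. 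Combining the two cases gives the empty intersection.

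For part (2), given a chain $(X_1,C_1)\le\cdots\le(X_k,C_k)$, the defining relations give $X_k\le\cdots\le X_1$ as faces and $X_i\circ C_k=C_i$ for all $i$. I would compute the intersection factorwise. The real factor is $\bigcap_i U_{X_i}$, which is nonempty and convex (indeed it is a single nonempty piece) precisely because the $X_i$ form a chain in $\scF(\A)$. For the imaginary factor I claim $\bigcap_i\widetilde{C_i}=\widetilde{C_k}$: since $X_k$ is the smallest face, $\A_{X_k}\supseteq\A_{X_i}$ for all $i$, and the relation $X_i\circ C_k=C_i$ says that the chamber $C_i$ of $\A$ lies in the chamber of $\A_{X_i}$ obtained by coarsening $\widetilde{C_k}$; thus $\widetilde{C_k}\subseteq\widetilde{C_i}$ for every $i$, whence the intersection of all the $\widetilde{C_i}$ equals $\widetilde{C_k}$. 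Putting the two factors together yields $Z_{(X_1,C_1)}\cap\cdots\cap Z_{(X_k,C_k)}=\bigl(\bigcap_{i=1}^k U_{X_i}\bigr)\times\widetilde{C_k}$, which is nonempty. The "in particular" clause follows by combining this with part (1): if the $(X_i,C_i)$ do not form a chain, some pair is incomparable and part (1) already gives an empty intersection.

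The main obstacle I anticipate is the bookkeeping in part (1) and in the cone identification $\widetilde{C_k}\subseteq\widetilde{C_i}$ in part (2): one has to translate the order relation $(X',C')\le(X,C)$, which is phrased via the composition $X'\circ C=C'$ on sign vectors, into honest geometric statements about which open half-spaces the cones $\widetilde{C_i}\subset\R^\ell$ occupy, and check that the localized chambers $\widetilde{C_i}$ nest correctly. Once that dictionary between the sign-vector composition and the geometry of the cones is set up cleanly, both parts are short; the only other input needed is the already-established facts that $U_{F_1}\cap U_{F_2}\neq\emptyset$ iff $F_1,F_2$ are comparable and that each $U_F$ (hence each $Z_{(X,C)}$) is convex.
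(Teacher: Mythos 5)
The paper states this lemma without proof (it says only ``We can also prove the following''), so there is no argument of the paper's to compare yours against; what you propose is the standard argument and it is essentially correct. Your dictionary between the Salvetti order and the geometry checks out: the factorization $\bigcap_i Z_{(X_i,C_i)}=\bigl(\bigcap_i U_{X_i}\bigr)\times\bigl(\bigcap_i\widetilde{C_i}\bigr)$ is immediate from the definition; in part (1), when the faces are comparable the failure of the relation $X'\circ C=C'$ must occur on a hyperplane on which the larger face has sign $0$ (it cannot occur where that sign is nonzero, because $(X,C)\in\Sal(\A)$ forces $X\leq C$), hence on a hyperplane containing both faces, and that hyperplane strictly separates the two cones; and in part (2), $X_i\circ C_k=C_i$ says exactly that $C_i$ and $C_k$ lie in the same chamber of $\A_{X_i}$, which together with $\A_{X_i}\subseteq\A_{X_k}$ gives $\widetilde{C_k}\subseteq\widetilde{C_i}$ and hence $\bigcap_i\widetilde{C_i}=\widetilde{C_k}$. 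Your index bookkeeping in part (1) is slightly garbled (with $X_1\le X_2$ as faces the relevant condition is $X_2\circ C_1=C_2$, not $X_1\circ C_2=C_1$), but you flag the symmetric case and the geometry is right.

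The one genuine gap is in the ``in particular'' clause of (2): you assert that $\bigcap_{i=1}^k U_{X_i}\neq\emptyset$ ``precisely because the $X_i$ form a chain,'' but the only available input is the preceding lemma, which is stated for \emph{pairs}. Pairwise-intersecting convex sets in $\R^\ell$ need not have a common point (Helly requires $(\ell+1)$-wise intersections), so the $k$-fold nonemptiness does not formally follow and needs its own short argument. One way: for the chain of faces $X_k\le\cdots\le X_1$, start from a point of $X_k$ and perturb successively into $X_{k-1},X_{k-2},\dots,X_1$ with rapidly decreasing step sizes; the resulting point lies in $X_1$, and for each $i$ its distance to $X_i$ is controlled by the (small) later steps while its distance to any $H\not\supseteq X_i$ is bounded below by the (larger) earlier step that moved off $H$, so it lies in every $U_{X_i}$. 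With that supplied, the proof is complete.
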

By Lemma \ref{lem:Salchain} 
and the nerve lemma (\cite[Cor. 4G.3]{hatcher}, 
\cite[Theorem 15.21]{kozlov}), 
the complement $M(\A)$ is homotopy equivalent to 
$\Delta_{\Sal}(\A)$. Thus we obtain Theorem \ref{thm:salcpx}.

\subsection{Delucchi-Falk complex} 
\label{subsec:df}

Delucchi and Falk \cite{del-fal} introduced a complex which 
can be described by using only the metric structure of 
the adjacency 
graph $\Gamma(\A)$. The distance of two chambers 
$C_1, C_2\in\ch(\A)$ is 
\[
d(C_1, C_2)=\#\{H\in\A\mid H \mbox{ separates $C_1$ and $C_2$}\}. 
\]
\begin{definition}
Define the order on $\DF(\A):=\ch(\A)\times\ch(\A)$ by 
\[
(C, D)\leq (C', D')\Longleftrightarrow 
d(C', D')=d(C', C)+d(C, D)+d(D, D'). 
\]
\end{definition}
\begin{theorem}
(\cite{del-fal}) The order complex of $\DF(\A)$ is 
homotopy equivalent to $\Delta_{\Sal}(\A)$. 
\end{theorem}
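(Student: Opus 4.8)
The plan is to build an explicit poset isomorphism (or at least a homotopy equivalence of order complexes) between $\DF(\A)$ and $\Sal(\A)$, and then invoke Theorem \ref{thm:salcpx}. First I would look for a natural map $\Phi:\Sal(\A)\to\DF(\A)$. Given $(X,C)\in\Sal(\A)$, recall that $X$ is a face with $X\le C$; the face $X$ determines an ``opposite'' chamber across $X$, namely the chamber $-X\circ C$ obtained by flipping all the nonzero signs of $X$ (equivalently, the chamber on the far side of each wall through $X$, keeping $C$'s signs on the remaining walls). The candidate is $\Phi(X,C)=(C,\, -X\circ C)$, i.e.\ the pair whose first entry is $C$ and whose second entry is the reflection of $C$ through the flat spanned by $X$. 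The key combinatorial fact to check is that $\Sep(C,-X\circ C)$ is exactly the zero set of $X$ (the walls through $X$), so $d(C,-X\circ C)=\#\{H: X(H)=0\text{ among the walls meeting }C\}$ records the codimension of $X$; thus $\Phi$ is a bijection onto its image, and one identifies the image as those pairs $(C,D)$ that are ``gated'' in the appropriate sense.

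The main work is to show $\Phi$ is order-reversing-or-preserving in a way that matches the two poset structures. On the $\Sal$ side, $(X',C')\le (X,C)$ means $X\le X'$ and $X'\circ C=C'$; on the $\DF$ side, $(C,D)\le(C',D')$ means $d(C',D')=d(C',C)+d(C,D)+d(D,D')$, i.e.\ $C,D$ lie on a geodesic from $C'$ to $D'$, equivalently $\Sep(C',C)\sqcup\Sep(C,D)\sqcup\Sep(D,D')=\Sep(C',D')$ as a disjoint union. So I would translate the $\Sal$-order relation into separation-set language: $X\le X'$ says the walls through $X'$ are among those through $X$ (since higher-codimensional faces lie on more walls — careful with the direction of the face order as used in \S\ref{subsec:sal}, where the order on $\scF(\A)$ is by inclusion of closures, so $X\le X'$ means $X'$ is a face of $\overline{X}$, hence more zeros), and $X'\circ C=C'$ pins down $C'$ relative to $C$ on the remaining walls. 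Feeding $\Phi(X,C)=(C,-X\circ C)$ and $\Phi(X',C')=(C',-X'\circ C')$ into the $\DF$ inequality, the disjoint-union-of-separation-sets condition should reduce exactly to the two conditions $X\le X'$ and $X'\circ C=C'$. This gives an isomorphism of posets $\Sal(\A)\cong\DF(\A)^{\mathrm{op}}$ or $\Sal(\A)\cong\DF(\A)$ (the op is immaterial: a poset and its opposite have the same order complex), and hence $\Delta_{\Sal}(\A)$ is homeomorphic, a fortiori homotopy equivalent, to the order complex of $\DF(\A)$.

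The hard part will be bookkeeping the sign conventions and the direction of the face poset order so that the equivalences ``$X\le X'$'' $\Leftrightarrow$ ``zero-set inclusion'' and ``$X'\circ C=C'$'' $\Leftrightarrow$ ``the non-separating walls agree'' come out with the correct variances, and confirming that $\Phi$ is genuinely surjective onto $\DF(\A)$ — one must check every pair $(C,D)$ arises, i.e.\ that for any two chambers there is a face $X$ with $-X\circ C=D$ and $X\le C$; this holds because $X$ can be taken as the face of $\overline C$ on which exactly the hyperplanes in $\Sep(C,D)$ vanish while the others keep $C$'s signs, which is a genuine face of $\A$ (it is $C\circ D$ restricted appropriately, a standard fact about the composition operation on sign vectors / covectors of an oriented matroid). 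If one prefers to avoid constructing the isomorphism outright, an alternative (and this may be closer to \cite{del-fal}) is to exhibit an order-preserving map in one direction whose fibers are contractible and apply Quillen's fiber lemma; but since both posets have the same cardinality of each ``rank'' and a clean candidate bijection exists, I expect the direct isomorphism to be the cleanest route, with the sign/order conventions being the only real obstacle.
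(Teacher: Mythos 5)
Your candidate map is, up to swapping the two coordinates, exactly the one the paper (and Delucchi--Falk) uses: send $(X,C)$ to the pair consisting of $C$ and its opposite chamber across $X$. But the core of your argument --- that this map is a poset isomorphism $\Sal(\A)\cong\DF(\A)$ --- is false, and the lemma you invoke to get surjectivity is not a fact. A pair $(C,D)$ lies in the image only if there is a face $X\leq C$ whose zero set is exactly $\Sep(C,D)$; the sign vector that vanishes precisely on $\Sep(C,D)$ and agrees with $C$ elsewhere is in general \emph{not} a covector of the arrangement (note also that the composition $C\circ D$ equals $C$ whenever $C$ is a chamber, since chambers have no zero coordinates, so it cannot produce the required face). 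Concretely, take $n\geq 3$ concurrent lines through the origin in $\R^2$ and two chambers $C,D$ with $1<d(C,D)<n$: the only point where two of the defining forms vanish is the origin, where all $n$ vanish, so no face has zero set $\Sep(C,D)$ and $(C,D)$ is not in the image. The cardinalities already rule out a bijection: $\#\DF(\A)=\#\ch(\A)^2=36$ in this example, while $\#\Sal(\A)=6+6\cdot 2+6=24$.

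So the map is injective but not surjective, and the theorem cannot be reduced to a poset isomorphism. The actual content --- which you relegated to an ``alternative'' and set aside --- is to show that this poset map nevertheless induces a homotopy equivalence of order complexes, e.g.\ by Quillen's fiber lemma (checking contractibility of the fibers over the pairs not in the image) or by a closure-operator/deformation argument as in Delucchi--Falk. The paper itself only exhibits the map and defers this verification to the reference; your write-up would need to supply that step rather than the claimed bijection.
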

\begin{proof}
The strategy is to construct a poset map $\Sal(\A)\longrightarrow
\DF(\A)$ that induces the homotopy equivalence of order 
complexes. Let $(X, C)\in\Sal(\A)$. Let $C'\in\ch(\A)$ be the 
opposite chamber of $C$ with respect to $X$. Then 
$(X, C)\longmapsto (C', C)$ gives a map 
$\Sal(\A)\longrightarrow\DF(\A)$, which induces a 
homotopy equivalence between order complexes. 
(See \cite{del-fal} for details.) 
\end{proof}

\begin{remark}
Notions in \S \ref{subsec:sal} and \S \ref{subsec:df} are generalized 
to oriented matroids \cite{ori-mat}. 
\end{remark}

\subsection{Minimality}

Let $X$ be a finite CW complex. In general, we have 
the following inequality 
\begin{equation}
\label{eq:lowerbd}
\#\{\mbox{$k$-dimensional cell}\}\geq b_k(X). 
\end{equation}
This means that the number of $k$-dimensional cells of 
a CW complex is bounded below by the $k$-th Betti number. 
A finite CW complex $X$ is called \emph{minimal} if the 
equality 
$\#\{\mbox{$k$-dimensional cell}\}\geq b_k(X)$ 
holds for every $k\geq 0$. This is equivalent to the condition that 
the boundary maps of the chain complex are vanishing. 
Hence, the $k$-dimensional cells of $X$ form 
a basis of the homology of $H_k(X, \Z)$. 
In general, a manifold does not have the homotopy type 
of a minimal CW complex. However, for the complement of 
a complex hyperplane arrangement, it is known to be minimal. 
\begin{proposition}
\label{prop:min}
(Dimca-Papadima \cite{dim-pap}, Randell \cite{ran-mor}) 
Let $\A$ be an essential hyperplane arrangement 
$\A$ in $\C^\ell$. 
Then the complement $M=M(\A)$ is homotopic to an 
$\ell$-dimensional minimcal CW complex. 
\end{proposition}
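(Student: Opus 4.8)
The plan is to prove minimality by a Morse-theoretic argument applied to a generic linear projection, which is the approach of Dimca--Papadima and Randell. First I would pass to the projectivized setting or, equivalently, choose a generic linear form $\ell_0:\C^\ell\to\C$ and consider the real part $\varphi=\mathrm{Re}(\ell_0)$ restricted to $M=M(\A)$. By essentiality, $M$ is homotopy equivalent to a finite CW complex (for instance the Salvetti complex of Theorem \ref{thm:salcpx}), so $M$ has the homotopy type of a compact manifold-with-boundary obtained by intersecting with a large ball; I would work with such a model so that $\varphi$ is a proper function bounded below, or alternatively deform $\varphi$ near infinity so that its sublevel sets are all homotopy equivalent to $M$ for large values.

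Next I would establish that, for a sufficiently generic choice of $\ell_0$, the function $\varphi$ is a Morse function on $M$ all of whose critical points have Morse index exactly $\ell$. This is the crucial step and the main obstacle. The key input is the structure of $M$ as an iterated fiber bundle (as in Example \ref{ex:cbraid}) together with the following local principle: near a point of $M$, after choosing coordinates adapted to the arrangement, $M$ looks locally like $(\C^\times)^k\times\C^{\ell-k}$, and the critical points of the real part of a generic linear form on $\C^\times$ have index $1$ (the function $\mathrm{Re}(az)$ on $\{z\neq 0\}$ has, after compactifying appropriately, a single critical point of index $1$ coming from the "$S^1$ direction''). Summing indices over the $\C^\times$-factors and using that the $\C$-factors contribute no critical points, one gets that every critical point of $\varphi$ on $M$ has index equal to the number of hyperplanes through the relevant intersection, but genericity of $\ell_0$ forces this number to be $\ell$ at every critical point. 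I would make this rigorous either via the fibration structure and an inductive argument on $\ell$ — handling the fiber $\C\setminus\{\text{points}\}$, whose real Morse theory gives only index-$0$ and index-$1$ cells, and combining with the index of the base — or via Lefschetz-theory estimates (Proposition \ref{prop:inj}) which already show $M$ has the homology of a space with cells only in the expected dimensions.

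Once $\varphi$ is known to be a Morse function with all critical points of index $\ell$, classical Morse theory (attaching a cell of dimension equal to the index as one passes each critical value) shows that $M$ is homotopy equivalent to a CW complex with cells only in dimensions $0,1,\dots,\ell$, and the number of $k$-cells equals the number of index-$k$ critical points. Since all critical indices are concentrated and the total count of critical points in each dimension is forced, I would then invoke the Morse inequalities together with the known values $b_k(M)$ from Proposition \ref{prop:OSPoin}: the number of index-$k$ critical points is at least $b_k(M)$, and because the Poincaré polynomial of $M$ equals $(-t)^\ell\chi(\A,-1/t)$ which is exactly the generating function for the cell counts produced by this Morse function, equality must hold in every degree. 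Therefore the resulting CW complex is minimal, and it is $\ell$-dimensional by construction. I would remark that an alternative, more combinatorial route avoids Morse theory entirely and instead collapses the Salvetti complex using discrete Morse theory governed by a generic flag (as in Proposition \ref{prop:chk}), in which case the cells correspond to the chambers in $\ch_\scF^k(\A)$ and minimality is immediate from $\#\ch_\scF^k(\A)=b_k(M(\A))$; I expect the hard part in either approach to be the genericity/transversality verification that pins down the critical indices (respectively, that the discrete vector field has exactly the claimed critical cells).
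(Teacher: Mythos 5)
Your plan is in the right circle of ideas (generic sections, Morse/Lefschetz theory, and the discrete-Morse alternative on the Salvetti complex), but as written it has genuine gaps that would sink the Morse-theoretic route. First, the function you propose, $\varphi=\mathrm{Re}(\ell_0)$ restricted to $M$, has \emph{no} critical points at all: $M$ is an open subset of $\C^\ell$ and the real part of a nonzero linear form is a submersion there. The functions that actually work are of the type $\varphi=|f(x)^2/Q(x)^{1/n}|$ (as in \S\ref{subsec:attach} of the paper), whose critical-point count requires the solved Varchenko conjecture, or Euclidean distance functions as in Randell; in either case the behavior near the hyperplanes and near infinity is the substantive issue, not something to be "deformed away." Second, your conclusion is internally inconsistent: if every critical point had index exactly $\ell$ and $\varphi$ were proper and bounded below, Morse theory would produce cells only in dimension $\ell$, which cannot yield a connected space with $b_k(M)\neq 0$ for $k<\ell$. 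The correct use of such a function is that its zero locus is $M\cap F$, it exhibits $M$ as $M\cap F$ with $\ell$-cells attached, and one must then \emph{induct on $\ell$} to build the lower-dimensional cells from the generic section — an induction your proposal never sets up. Third, your argument that the number of cells equals $b_k(M)$ ("the Poincar\'e polynomial \dots is exactly the generating function for the cell counts") is circular: that identity of generating functions is precisely the minimality statement to be proved.

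The paper's actual proof is shorter and cleaner: induct on $\ell$, with the $\ell=1$ case a bouquet of circles; for $\ell>1$ take a generic hyperplane $F$, invoke the affine Lefschetz theorem of Hamm--L\^e to write $M\simeq (M\cap F)\cup(\ell\text{-cells})$ with $\rank H_\ell(M,M\cap F)$ cells, and then compute this rank from the long exact sequence of the pair: $H_\ell(M\cap F)=0$ for dimension reasons and $H_{\ell-1}(M\cap F)\to H_{\ell-1}(M)$ is an isomorphism by Proposition \ref{prop:inj}, forcing $H_\ell(M)\cong H_\ell(M,M\cap F)$, i.e.\ exactly $b_\ell(M)$ new cells. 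That exact-sequence step is the piece your proposal is missing and is what replaces your circular Poincar\'e-polynomial argument. Your closing remark about collapsing the Salvetti complex via discrete Morse theory is a legitimate alternative (Salvetti--Settepanella, Delucchi), but it is a different proof and would require constructing the acyclic matching, which you correctly identify as the hard part.
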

\begin{proof}[Proof of Proposition \ref{prop:min}]
The proof is by induction on $\ell$. If $\ell=1$, then 
$M(\A)$ is clearly homotopic to a bouquet of $n$ circles. 

Next, let $\ell>1$. Let $F\subset\C^\ell$ be a  generic hyperplane. 
By the affine Lefschetz hyperplane section theorem \cite{le-ham} 
(we will see later the idea of the proof of the affine Lefschetz 
hyperplane section theorem. See \S \ref{subsec:attach}), 
$M(\A)$ is homotopy equivalent to a space obtained by 
attaching $\ell$-dimensional cells to $M(\A)\cap F$. 
The number of $\ell$-cells is clearly 
$\rank H_\ell(M, M\cap F)$. 
Consider the homology long exact sequence of the pair of spaces 
$(M, M\cap F)$. We have 
\[
H_\ell(M\cap F)
\stackrel{(\alpha)}{\longrightarrow} 
H_\ell(M)
\stackrel{(\beta)}{\longrightarrow} 
H_\ell(M, M\cap F)
\stackrel{(\gamma)}{\longrightarrow} 
H_{\ell-1}(M\cap F)
\stackrel{(\delta)}{\longrightarrow} 
H_{\ell-1}(M)
\]
Since $M\cap F$ is homotopy equivalent to an 
$(\ell-1)$-dimensional CW complex, we have 
$H_\ell (M\cap F)=0$. By 
Proposition \ref{prop:inj}, $(\delta)$ is isomorphic. 
Therefore, both $(\alpha)$ and $(\gamma)$ are zero map, 
we have $(\beta)$ is isomorphic, and $b_\ell(M)=\rank H_\ell(M)$ 
is equal to the number of $\ell$-cells. 
\end{proof}

\subsection{Attaching maps}
\label{subsec:attach}

The homotopy type of the minimal CW complex depends on the 
attaching maps of cells. The affine Lefschetz hyperplane section 
theorem \cite{le-ham} tells us that there exist continuous maps 
\[
\sigma_i: (D^\ell, \partial D^\ell)\longrightarrow (M, M\cap F), 
\]
$(i=1, \dots, b_\ell(M))$, such that 
$M$ is homotopy equivalent to 
\[
(M\cap F)\cup\bigcup_{i=1}^{b_\ell(M)}D^\ell. 
\]
This fact can be proved using  Morse theory way as follows. 
Let $Q=\prod_{H\in\A}\alpha_H$ be the defining equation of 
$\A$. Let $F=\{f=0\}$, where $f:\C^\ell\longrightarrow\C$ is a 
defining equation of the generic hyperplane $F$. 
Consider 
\begin{equation}
\label{eq:morsefcn}
\varphi=\left|
\frac{f(x)^2}{Q(x)^{1/n}}
\right|
: M(\A)\longrightarrow\R_{\geq 0}
\end{equation}
By Varchenko conjecture (solved in \cite{sil, ot-var}), 
$\varphi$ has non-degenerate critical points with Morse index $\ell$, 
and the number of critical points is equal to 
\[
(-1)^\ell\chi(M\setminus F)=b_\ell(M(\A)). 
\]
(Note that, to apply solutions to Varchenko conjecture, we might 
have to perturb the exponents generically.) 
Recall that the gradient flow is a map 
$\phi: M\times \R\longrightarrow M$ 
satisfying 
\begin{equation}
\label{eq:gradflow}
\frac{\partial}{\partial t}\phi(x, t)=-\grad\varphi(x). 
\end{equation}
For each critical point $p\in\Crit(\varphi)$, define the 
stable cell $\scS_p$ and unstable cell $\scU_p$ as 
\[
\begin{split}
\scS_p&=\{x\in M\mid\lim_{t\to\infty}\phi(x, t)=p\}\\
\scU_p&=\{x\in M\mid\lim_{t\to-\infty}\phi(x, t)=p\}. 
\end{split}
\]
Then the boundary of $\scU_p$ is attached to $F$. These 
cells form the minimal CW complex. 
For the explicit description of the cells, there are two main 
problems: 
\begin{itemize}
\item[(a)] 
Describe the set of critical points. 
\item[(b)] 
Describe the unstable cells. 
\end{itemize}
The (a) is an algebraic problem. Therefore, it might be possible 
for individual arrangements. However, (b) is a more transcendental 
problem. It would be difficult to solve (b) even for a fixed 
arrangement. 

In the real arrangement case, let 
\[
\ch_F(\A)=\{C\in\ch(\A)\mid C\cap F=\emptyset\}. 
\]
Using the structure of chambers, 
one can solve the following weaker versions of (a) and (b) 
which are enough to describe the homotopy type. 
\begin{itemize}
\item[(a')] 
Describe the set of stable cells on which critical points exist. 
\item[(b')] 
Describe the homotopy type of unstable cells. 
\end{itemize}

\begin{proposition}
\cite[Theorem 4.3.1]{yos-lef} 
The Morse function $\varphi:M\to\R_{\geq 0}$ 
in (\ref{eq:morsefcn}) has a critical point $p_C$ in every 
chamber 
$C\in \ch_{\scF}^\ell(\A)$. 
Furthermore, the stable cell $\scS_{p_C}$ of $p_C$ 
is the chamber $C$ itself. 
\end{proposition}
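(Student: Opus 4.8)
The plan is to analyze the Morse function $\varphi$ of \eqref{eq:morsefcn} directly on each chamber $C \in \ch_{\scF}^\ell(\A)$, exploiting the fact that on the real chamber (regarded as sitting inside $M(\A)$ via the zero section $v=0$) the function $\varphi$ restricts to a proper function with a unique minimum, and then showing that this minimum is a non-degenerate critical point of the full function $\varphi$ on $M(\A)$ with the chamber itself as stable manifold.

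\smallskip

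First I would record the behavior of $\varphi$ on the real points. Writing $\varphi = |f^2/Q^{1/n}|$, on $\R^\ell$ the quantity $|Q(x)|^{1/n}$ is (up to a constant) the geometric mean of the distances to the hyperplanes, so $\varphi$ restricted to a chamber $C$ tends to $+\infty$ as $x$ approaches any bounding hyperplane of $C$ (the numerator $|f(x)|^2$ is bounded away from $0$ there precisely because $C \cap F = \emptyset$, which is the defining property of $C \in \ch_F(\A) \supset \ch_\scF^\ell(\A)$), and also tends to $+\infty$ as $|x| \to \infty$ inside $C$ because then $|f(x)|^2$ grows quadratically while $|Q(x)|^{1/n}$ grows only linearly. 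Hence $\varphi|_C$ is proper and bounded below, so it attains a minimum at an interior point $p_C \in C$; one then checks (this is essentially the content of the Varchenko-type analysis, or can be seen by a log-convexity argument in the spirit of Lemma \ref{lem:conv}) that this minimum is unique and non-degenerate as a critical point of $\varphi|_C$.

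\smallskip

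Next I would promote $p_C$ to a critical point of $\varphi$ on all of $M(\A)$. The key point is a symmetry: complex conjugation $x + \sqrt{-1}v \mapsto x - \sqrt{-1}v$ is an anti-holomorphic involution of $M(\A)$ fixing exactly the real points, and $\varphi$ is invariant under it (since $f$ and $Q$ have real coefficients, $\varphi(\bar z) = \varphi(z)$). Therefore the gradient of $\varphi$ at a real point is tangent to $\R^\ell$, so a critical point of $\varphi|_{\R^\ell}$ is automatically a critical point of $\varphi$ on $M(\A)$. The Hessian of $\varphi$ at $p_C$ then splits into its restriction to the real directions $T_{p_C}\R^\ell$ — which is positive definite by the previous paragraph — and its restriction to the imaginary directions; by the anti-holomorphicity of conjugation together with the fact that $\varphi$ is (up to the known perturbation of exponents) a harmonic-type function on the complex manifold, the imaginary part of the Hessian is the negative of the real part, so the Morse index is exactly $\ell$, matching the count $b_\ell(M(\A))$ from Proposition \ref{prop:min}. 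This accounts for all critical points, so there is exactly one in each $C \in \ch_\scF^\ell(\A)$.

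\smallskip

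Finally, for the stable-cell claim: since the real directions at $p_C$ are exactly the positive directions of the Hessian, the stable manifold $\scS_{p_C}$ is tangent to $\R^\ell$ at $p_C$, and it is $\varphi$-invariant under conjugation, so $\scS_{p_C} \subseteq \R^\ell$; being the full stable set of an index-$\ell$ critical point inside the $\ell$-manifold $\R^\ell$, and since the gradient flow of $\varphi|_{\R^\ell}$ has no other critical points in $C$ and pushes every point of $C$ toward $p_C$ (again by properness and uniqueness of the minimum), we get $\scS_{p_C} = C$. The main obstacle I anticipate is the clean verification that the Hessian in the imaginary directions is negative definite — i.e.\ that $p_C$ really has index $\ell$ and not something smaller — which is where one genuinely needs the resolution of Varchenko's conjecture (or a careful direct estimate, possibly after the generic perturbation of exponents mentioned after \eqref{eq:morsefcn}); the real-variable part of the argument, by contrast, is elementary convexity and properness.
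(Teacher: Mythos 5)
Your proof follows essentially the same route as the paper's: properness of $\varphi|_C$ (it blows up at the boundary of $C$ and at infinity because $C\cap F=\emptyset$) yields an interior minimum $p_C$, the conjugation symmetry promotes it to a critical point of $\varphi$ on all of $M(\A)$ (the paper invokes the Cauchy--Riemann equations for this same step), the resolved Varchenko conjecture accounts for all $b_\ell(M)$ critical points, and real-invariance of the gradient flow plus a dimension count gives $\scS_{p_C}=C$. One small caveat: conjugation-invariance of $\scS_{p_C}$ by itself does not force $\scS_{p_C}\subseteq\R^\ell$ (an invariant set need not lie in the fixed locus of the involution), but the inclusion $C\subseteq\scS_{p_C}$ that you also establish, combined with $\dim\scS_{p_C}=\ell$, is exactly the paper's (and a sufficient) argument, so nothing is lost.
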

\begin{proof}
Let $C\in\ch_F(\A)$. 
By the definition, $|\varphi(x)|\to\infty$ as 
$x\in C$ approaches the boundary. Hence 
$\varphi|_C:C\to\R$ has at least one minimal point $p_C\in C$. 
By the Cauchy-Riemann equation, $p_C$ is a critical point 
of $\varphi: M\to\R$. Now we have $b_\ell(M)$ critical 
points. However, by the Varchenko conjecture, there are no 
other critical points. 

The gradient flow of (\ref{eq:gradflow}) preserves real points, 
hence we have 
\[
C\subseteq\scS_{p_C}. 
\]
However, both are $\ell$-dimensional, thus we have 
$C=\scS_{p_C}$ (Figure \ref{fig:morsefcn}). 
\begin{figure}[htbp]
\centering
\begin{tikzpicture}


\draw[thick, ->] (-0.5,0.5) -- ++(0,3) node[right] {$\varphi$}; 

\draw[thick] (1,4) .. controls (1.5,4) and (3,4) .. (3,3.5);
\draw[thick] (0,3) .. controls (0.5,3) and (3,3) .. (3,3.5);

\draw[thick] (3,3.5) .. controls (3,2.5) and (4,2) .. (5,2);
\draw[thick] (7,3.5) .. controls (7,2.5) and (6,2) .. (5,2);

\draw[thick] (10,4) .. controls (9.5,4) and (7,4) .. (7,3.5);
\draw[thick] (9,3) .. controls (8.5,3) and (7,3) .. (7,3.5);

\draw[thick] (0,3) -- (0,0) -- (9,0) -- (9,3); 
\draw[thick] (1,4) -- ++(0, -0.9);
\draw[dashed] (1,3) -- (1,1) -- (9,1);
\draw[thick] (9,1) -- ++(1,0) -- ++(0,3);

\draw[thick] (6,0) .. controls (6,0.5) and (5.5,2) .. (5,2);
\draw[dashed] (4.3,1) .. controls (4.3,1.5) and (4.5,2) .. (5,2);

\coordinate (A) at (2.7,3.25); 
\coordinate (P) at (5.4,1.7); 
\coordinate (B) at (7.3,3.15); 

\draw[thick] (A) to [out=270, in=180] (P);
\draw[thick] (B) to [out=270, in=0] node[anchor=north west] {$\scS_{p_C}=C$} (P);
\draw (6,0) node[anchor=south west] {$\scU_{p_C}$}; 

\filldraw[fill=black, draw=black] (P) node [anchor=north east] {$p_C$} circle (0.07); 

\draw (10,1.5) -- ++(1,0) -- node[right] {$F=\{\varphi =0\}$} ++(-2,-2) --
++(-10,0) -- ++(1,1); 

\draw[ultra thin, dashed] (0,0.5) --++(1,1)--++(9,0); 

\end{tikzpicture}
\caption{$\scS_{p_C}$ and $\scU_{p_C}$} 
\label{fig:morsefcn}
\end{figure}
\end{proof}

\begin{proposition}
\cite[Corollary 5.1.5]{yos-lef} 
$X=M(\A)\setminus\bigsqcup_{C\in\ch_F(\A)}$ is diffeomorphic to 
$(M(\A)\cap F)\times D^2$. In particular, $X$ is homotopy 
equivalent to $M\cap F$. 
\end{proposition}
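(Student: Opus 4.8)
The displayed set is to be read as $X=M(\A)\setminus\bigsqcup_{C\in\ch_F(\A)}\scS_{p_C}$; by the preceding proposition $\scS_{p_C}=C$, so $X$ is the complement in $M:=M(\A)$ of the chambers $C\in\ch_F(\A)$, each of which — since $\overline C\setminus C\subset\bigcup_{H\in\A}H$ — is a properly embedded $\ell$-submanifold-with-corners of $M$. The plan is to run the negative gradient flow $\phi$ of the Morse function $\varphi$ of \eqref{eq:morsefcn} on $X$ and to show that it exhibits $X$ as a trivial $2$-disk bundle over $M\cap F=\varphi^{-1}(0)$.

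First I would fix the picture near $M\cap F$. With $Q$ the defining polynomial of $\A$ and $f$ that of $F$, on $M$ one has $\varphi=|f|^{2}\,|Q|^{-1/n}$ with $|Q|^{-1/n}$ smooth and positive, so $M\cap F$ is a complex submanifold of $M$ of codimension one along which $\varphi$ attains its minimum $0$ nondegenerately in the normal directions (a Morse--Bott minimum), and its normal bundle is the restriction of the trivial rank-one normal bundle of the hyperplane $F\subset\C^{\ell}$. Choosing $0<\eps<\min_{C\in\ch_F(\A)}\varphi(p_C)$, the tubular neighbourhood theorem for this Morse--Bott minimum gives a diffeomorphism $N_\eps:=\varphi^{-1}([0,\eps])\cong (M\cap F)\times D^{2}$ (trivialise the normal disk bundle and radially rescale its fibres). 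Note $N_\eps\subset X$, since $\varphi|_{C}\ge\varphi(p_C)>\eps$ for every $C\in\ch_F(\A)$.

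Next I would analyse the flow. By the preceding proposition together with the solution of Varchenko's conjecture, $\Crit(\varphi)=\{p_C\mid C\in\ch_F(\A)\}$, all of Morse index $\ell$, with $\scS_{p_C}=C$. Since stable manifolds are $\phi$-invariant, $X=M\setminus\bigsqcup_C C$ is $\phi$-invariant and $\varphi$ has no critical point on $X$. For $x\in X$, $t\mapsto\varphi(\phi(x,t))$ is nonincreasing and $\ge 0$; were its limit positive it would be a critical value $\varphi(p_C)$ and $\phi(x,t)$ would converge to $p_C$, forcing $x\in\scS_{p_C}=C$, a contradiction. Hence $\varphi(\phi(x,t))\to 0$, so the forward orbit of every $x\in X$ enters $N_\eps$ and stays there. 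Passing to the normalised flow $\psi$ of $-\grad\varphi/|\grad\varphi|^{2}$ on $X$ (so $\varphi(\psi(x,t))=\varphi(x)-t$), the orbit map gives a diffeomorphism $\varphi^{-1}([\eps,\infty))\cap X\;\cong\;\varphi^{-1}(\eps)\times[0,\infty)\cong(M\cap F)\times S^{1}\times[0,\infty)$; gluing this to $N_\eps\cong(M\cap F)\times D^{2}$ along $\varphi^{-1}(\eps)=\partial N_\eps\cong(M\cap F)\times S^{1}$ — equivalently, attaching an exterior collar to $N_\eps$ — yields $X\cong(M\cap F)\times D^{2}$. The homotopy equivalence $X\simeq M\cap F$ is then immediate (or, directly, extend the forward flow to $t=\infty$ to deformation retract $X$ onto $M\cap F$).

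The hard part will be this last step: upgrading the homotopy statement to a genuine diffeomorphism forces one to verify that the normalised flow $\psi$ on $\varphi^{-1}([\eps,\infty))\cap X$ is defined for all forward time and that each orbit meets $\varphi^{-1}(\eps)$ exactly once — i.e. that no orbit runs off to a hyperplane of $\A$ in finite time. This is exactly where the special form of $\varphi$ enters: $\varphi\to+\infty$ as $x$ approaches any $H\in\A$, so along a $\psi$-orbit, on which $\varphi$ is an affine function of $t$, the point stays at positive distance from $\bigcup_{H\in\A}H$; this gives the completeness and the product structure even though $\varphi$ is not proper on $M$. One must also check continuity of the limiting retraction as $\varphi\to 0$, which is routine given the Morse--Bott nature of $M\cap F$.
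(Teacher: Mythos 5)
Your reading of the (garbled) displayed set as $X=M(\A)\setminus\bigsqcup_{C\in\ch_F(\A)}\scS_{p_C}=M(\A)\setminus\bigsqcup_{C}C$ is the intended one, and your strategy is exactly the mechanism behind the cited Corollary 5.1.5 of \cite{yos-lef} (the survey itself gives no proof): the critical points of $\varphi$ all lie in the removed chambers with $\scS_{p_C}=C$, so $X$ is flow-invariant and free of critical points above level $0$, every forward orbit descends to the Morse--Bott minimum $M\cap F$, and the triviality of the normal bundle of $F$ turns the resulting disk bundle into a product. The decomposition into $N_\eps=\varphi^{-1}([0,\eps])$ plus an exterior collar swept out by the normalized flow is the right way to organize this, and your identification of where the special form of $\varphi$ enters (it blows up along $\bigcup H_\C$, so orbits cannot hit the arrangement in finite time) is on target.

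There is, however, one concrete gap, and it is precisely where the cited reference does its real work: all the compactness you implicitly use is unavailable because $\varphi$ is not proper and $M\cap F$ is not compact. First, your dichotomy ``if $\lim_{t\to\infty}\varphi(\phi(x,t))=c>0$ then the orbit converges to some $p_C$'' is not valid on a non-compact manifold without a Palais--Smale-type condition: the orbit could escape to infinity with $\varphi$ decreasing to a non-critical positive value. This is a live possibility here, since $\varphi=|f|^2|Q|^{-1/n}\to 0$ along sequences going to infinity at bounded distance from $F$ (the numerator stays bounded while $|Q|\to\infty$), so the regions $\varphi^{-1}([0,a])$ reach out to infinity alongside $F$; your completeness discussion rules out collision with the hyperplanes but not this drift to infinity. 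Second, and for the same reason, $\varphi^{-1}([0,\eps])\cong(M\cap F)\times D^2$ for a \emph{fixed} $\eps$ is not a formal consequence of the Morse--Bott tubular neighbourhood theorem: over a non-compact critical manifold the fibres $\{|f|^2\le\eps|Q(p)|^{1/n}\}$ grow without bound as $p\to\infty$ in $M\cap F$, so one must either verify directly that these sublevel sets are still fibred by disks over $M\cap F$, or replace $\eps$ by a controlled function of the base point. Both points require genuine estimates on $\grad\varphi$ near infinity (this is also why the survey insists on flags ``near the hyperplane at infinity''), and your proof is incomplete until they are supplied.
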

By these results, we conclude as follows. 
\begin{proposition}
\label{prop:characterizcells}
For each $C\in\ch_F(\A)$, the unstable cell 
$\sigma_C:(D^\ell, \partial D^\ell)\to (M, M\cap F)$ 
has the following properties. 
\begin{itemize}
\item[(i)] 
$\sigma_C$ is transversal to $C$ and 
$\sigma_C(D^\ell)\cap C$ is a point. 
\item[(ii)] 
$\sigma_C(D^\ell)\cap C'=\emptyset$ for 
$C'\in\ch_F(\A)\setminus\{C\}$. 
\end{itemize}
Furthermore, (i) and (ii) characterize the homotopy type 
of the map 
$\sigma_C:(D^\ell, \partial D^\ell)\to (M, M\cap F)$. 
\end{proposition}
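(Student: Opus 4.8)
The plan is to deduce the proposition from the two preceding propositions. Recall the Morse-theoretic picture behind them: the function $\varphi$ has a nondegenerate critical point $p_C$ of index $\ell$ in each chamber $C\in\ch_F(\A)$, its stable cell is $\scS_{p_C}=C$, its unstable cell $\scU_{p_C}$ is an embedded open $\ell$-disk containing $p_C$, and $\sigma_C$ is the associated characteristic map with $\sigma_C(\partial D^\ell)\subset M\cap F$; moreover $M$ is homotopy equivalent to $(M\cap F)$ with these $b_\ell(M)=\#\ch_F(\A)$ top cells attached (the minimal CW structure). First I would verify (i) and (ii) directly from this picture, and then prove the characterization by an intersection-theoretic argument: the classes $[\sigma_C]$ form a basis of $H_\ell(M,M\cap F)$, and intersection with the middle-dimensional submanifolds $C'$ reads off the coordinates of an arbitrary class in this basis.

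\emph{Properties (i) and (ii).} Write $\sigma_C(D^\ell)$ as the disjoint union of the open cell $\sigma_C(D^\ell\setminus\partial D^\ell)$ and $\sigma_C(\partial D^\ell)$. Since $\sigma_C(\partial D^\ell)\subset M\cap F$ and $C\cap F=\emptyset$, the boundary part misses $C$, so $\sigma_C(D^\ell)\cap C=\sigma_C(D^\ell\setminus\partial D^\ell)\cap C\subseteq\scU_{p_C}\cap\scS_{p_C}$. A point lying in both $\scU_{p_C}$ and $\scS_{p_C}$ sits on a gradient trajectory of $\varphi$ converging to $p_C$ as $t\to+\infty$ and as $t\to-\infty$; since $\varphi$ strictly decreases along nonconstant trajectories, that trajectory is the constant one, so $\scU_{p_C}\cap\scS_{p_C}=\{p_C\}$, and as $p_C$ is the image of an interior point of $D^\ell$ we get $\sigma_C(D^\ell)\cap C=\{p_C\}$. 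At $p_C$ the tangent spaces $T_{p_C}\scU_{p_C}$ and $T_{p_C}\scS_{p_C}$ are the negative and positive eigenspaces of the Hessian of $\varphi$, which are complementary in $T_{p_C}M$ by nondegeneracy (index $\ell$ inside the $2\ell$-manifold $M$), so the intersection is transverse: this is (i). For (ii), if $C'\ne C$ then $\sigma_C(D^\ell)\cap C'=\sigma_C(D^\ell\setminus\partial D^\ell)\cap C'\subseteq\scU_{p_C}\cap\scS_{p_{C'}}$, a union of gradient trajectories running from $p_C$ to $p_{C'}$; since all critical points of $\varphi$ have index $\ell$ and (after the generic perturbation of the exponents already needed to invoke the Varchenko conjecture) the flow may be taken Morse--Smale, the moduli space of such trajectories has formal dimension $\ell-\ell-1=-1$ and is empty: this is (ii).

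\emph{The characterization.} Because $M$ is obtained from $M\cap F$ by attaching $\ell$-cells, the pair $(M,M\cap F)$ is $(\ell-1)$-connected and $H_\ell(M,M\cap F)$ is free with basis $\{[\sigma_C]\mid C\in\ch_F(\A)\}$ (all other relative homology vanishing). Each $C'\in\ch_F(\A)$ is a real $\ell$-dimensional submanifold of the oriented $2\ell$-manifold $M$, disjoint from $M\cap F$ (as $C'\cap F=\emptyset$) and closed in $M$ (every boundary face of a chamber lies in some $H_i$, hence outside $M$). Hence counting signed transverse intersection points with $C'$ defines a homomorphism $H_\ell(M,M\cap F)\to\Z$, $[\tau]\mapsto\tau\cdot C'$, invariant under homotopies of maps of pairs $\tau\colon(D^\ell,\partial D^\ell)\to(M,M\cap F)$, since such homotopies keep $\partial D^\ell$ inside $M\cap F$, away from $C'$. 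By (i)--(ii) above, $[\sigma_C]\cdot C'=\delta_{C,C'}$, so the functionals $\tau\mapsto\tau\cdot C'$ form the dual basis and $[\tau]=\sum_{C'}(\tau\cdot C')\,[\sigma_{C'}]$ for every such $\tau$. Conditions (i) and (ii) for $\tau$ state precisely that $\tau\cdot C=\pm1$ and $\tau\cdot C'=0$ for $C'\ne C$, so $[\tau]=\pm[\sigma_C]$ in $H_\ell(M,M\cap F)$. Finally, (i) forces $\tau^{-1}(C)$ to be a single transverse interior point, so the restriction of $\tau$ to the $S^{\ell-1}$ onto which $D^\ell$ minus that point deformation retracts misses every chamber $C'\in\ch_F(\A)$ and hence factors, up to homotopy, through $X=M\setminus\bigsqcup_{C'\in\ch_F(\A)}C'$; using that $X$ is homotopy equivalent to $M\cap F$ (the content of the preceding proposition) together with the relative Hurewicz theorem, one lifts $[\tau]=\pm[\sigma_C]$ from $H_\ell$ to $\pi_\ell(M,M\cap F)$, and, with orientations and basepoints chosen coherently, concludes $\tau\simeq\sigma_C$ as maps of pairs.

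The hard part will be exactly this last step, i.e.\ upgrading the homological identity $[\tau]=\pm[\sigma_C]$ to a genuine homotopy of maps of pairs. Since $M\cap F$ is not simply connected, the relative Hurewicz map $\pi_\ell(M,M\cap F)\to H_\ell(M,M\cap F)$ need not be injective: dragging the boundary of $\sigma_C$ around a loop in $M\cap F$ produces a map with the same intersection data that may represent a different relative homotopy class. So strictly speaking (i) and (ii) characterize $\sigma_C$ only up to homotopy \emph{and} the $\pi_1(M\cap F)$-action, which, however, does not change the homotopy type of the resulting cell attachment; making this precise is where one would use the explicit product structure $X\cong(M\cap F)\times D^2$ to control the attaching map $S^{\ell-1}\to M\cap F$ directly. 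The remaining points are routine: putting a merely continuous $\tau$ into general position with respect to each $C'$ before counting intersections, and the orientation bookkeeping that turns the ambiguous sign into $+1$.
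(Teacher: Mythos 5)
Your argument for (i)--(ii) and your homological bookkeeping are reasonable, but there are two real problems. First, a repairable one: to get (ii) you invoke a Morse--Smale condition, claiming it follows from the generic perturbation of exponents used for the Varchenko conjecture. That perturbation only guarantees nondegeneracy of the critical points; transversality of stable and unstable manifolds is a property of the gradient flow (function \emph{and} metric), and you cannot perturb the metric here without destroying exactly the features your proof of (i) relies on (invariance of the real locus, $\scS_{p_C}=C$). The argument that actually works is simpler and stays inside the given flow: since the real locus is flow-invariant and $T_{p_C}C$ is the positive eigenspace of the Hessian, the unstable manifold meets $\R^\ell$ only at $p_C$; hence $\scU_{p_C}\cap C'=\emptyset$ for $C'\neq C$ and $\scU_{p_C}\cap C=\{p_C\}$, with transversality as you say. (You also silently identify $\sigma_C(D^\ell\setminus\partial D^\ell)$ with $\scU_{p_C}$; making $\sigma_C$ a map of pairs requires flowing down into a low sublevel set and retracting onto $M\cap F$, and one must check this does not reintroduce intersections with chambers --- true because $\varphi$ is bounded below by a positive constant on each $C'\in\ch_F(\A)$, but it needs saying.)

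The essential gap is the ``furthermore'' clause, which is the actual content of the proposition: you only establish $[\tau]=\pm[\sigma_C]$ in $H_\ell(M,M\cap F)$, and you yourself concede that the relative Hurewicz step fails because $M\cap F$ is not simply connected, so your conclusion is at best ``characterized up to the $\pi_1(M\cap F)$-action,'' which is weaker than the statement. The missing idea is to use the structure of the complement of the chambers directly rather than homology: by the preceding result, $X=M\setminus\bigsqcup_{C'\in\ch_F(\A)}C'$ is diffeomorphic to $(M\cap F)\times D^2$, and each chamber $C$ is convex, hence contractible, with trivial normal bundle in $M$. Given any $\tau$ satisfying (i)--(ii), one first normalizes it near the unique transverse intersection point to a standard normal $\ell$-disk of $C$ (possible because the tubular neighborhood of $C$ is a product $C\times D^\ell$ over a contractible base), and then uses the product structure $X\cong(M\cap F)\times D^2$ to homotope the remaining annulus $S^{\ell-1}\times[0,1]$, rel its inner boundary, into $M\cap F$ in an essentially unique way; this is how the cited source (\cite{yos-lef}) pins down the homotopy class of the map of pairs, basepoint and $\pi_1$ issues included. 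Without an argument of this kind the characterization --- and hence the determination of the attaching maps of the minimal CW complex --- is not proved; intersection numbers against the chambers cannot see the difference between $\sigma_C$ and its translates under $\pi_1(M\cap F)$, so the homological route you chose cannot be completed as stated.
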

One can construct an explicit continuous map 
$\sigma_C:(D^\ell, \partial D^\ell)\to (M, M\cap F)$ satisfying 
(i) and (ii) in Proposition \ref{prop:characterizcells}, see 
\cite{yos-lef} for details. For $2$-dimensional case, we 
will describe the dual stratification in \S \ref{sec:min2dim}. 
We also note that in $2$-dimensional case, one can describe 
the handle decomposition of the $4$-manifold $M(\A)$ by refining 
minimal CW decomposition \cite{sug-yos}.

\subsection{Minimality via Salvetti complex}

To avoid problems (a) and (b) in the previous section, 
an alternative way to describe the minimal CW complex is to 
start from the Salvetti complex. As we saw, the Salvetti 
complex is a large regular CW complex. For instance, the number 
of $0$-cells is equal to $\#\ch(\A)$. However, using 
discrete Morse theory \cite{kozlov, koz-col}, one can prove 
that the Salvetti complex $\Delta_{\Sal}(\A)$ is homotopy 
equivalent to a minimal CW complex. The idea is to construct 
an acyclic matching on the face poset of the complex. 
The remaining cells are called critical cells. 
Note that the collapse of cells along an acyclic matching 
does not change the homotopy type. 
It is sufficient to construct an acyclic matching matching such 
that the number of critical cells is equal to the sum of Betti 
numbers. This can be done in several ways. 
\begin{itemize}
\item 
Salvetti and Settepanella \cite{sal-sett} constructed an acyclic 
matching using a generic flag. 
\item 
Delucchi \cite{del-min}, Lofano and Paolini \cite{lof-pao} 
constructed a matching using the Euclidean distance 
between a generically fixed base point and chambers. 
This also proves the minimality for locally finite affine arrangements. 
\end{itemize}
These techniques are applicable to many other situations. 
For example, the minimality was proven for 
Salvetti comlexes for oriented matroids \cite{del-sett}, 
toric arrangements \cite{dan-del}, 
$2$-arrangements \cite{adi} and so on. 

\begin{remark}
The minimal CW complex described in \cite{yos-lef} and 
the one based on the Salvetti complex are very different. 
More precisely, the homotopy types of individual cells are 
different. It would be interesting to ask whether starting 
from Delucchi-Falk complex, can one describe both of them 
or not. 
\end{remark}

\subsection{Minimal stratification}
\label{sec:min2dim}

Let $\A=\{H_1, \dots, H_n\}$ be a line arrangement in $V=\R^2$. 
Fix a flag $F^0\subset F^1$ and defining equation $\alpha_i$ of 
$H_i$ as in \S \ref{sec:pres}. For the sake of 
simplicity, we also set $\alpha_0=-1$. 
Define the subset $S_i\subset M(\A)$ ($i=1, \dots, n$) as 
\begin{equation}
S_i=
\left\{
x\in M(\A)\left| \frac{\alpha_i(x)}{\alpha_{i-1}(x)}\in\R_{<0}
\right.
\right\}. 
\end{equation}
Note that since $\alpha_0=-1$, 
$S_1=\{x\in M(\A)\mid \alpha_1(x)\in\R_{>0}\}=
\alpha_1^{-1}(\R_{>0})$. Note also that if $x\in\R^2$, then 
$x\in S_i$ if and only if $\alpha_i(x)$ and $\alpha_{i-1}(x)$ have 
different signs, which means, roughly speaking, $x$ is in between 
$H_i$ and $H_{i-1}$. Each $S_i$ is a $3$-dimensional submanifold 
of $M(\A)$. Furthermore, $S_i$ and $S_j$ ($i\neq j$) 
intersect transversally, and 
$S_i\cap S_j$ is a union of chambers 
(Figure \ref{fig:3dimsub}). 
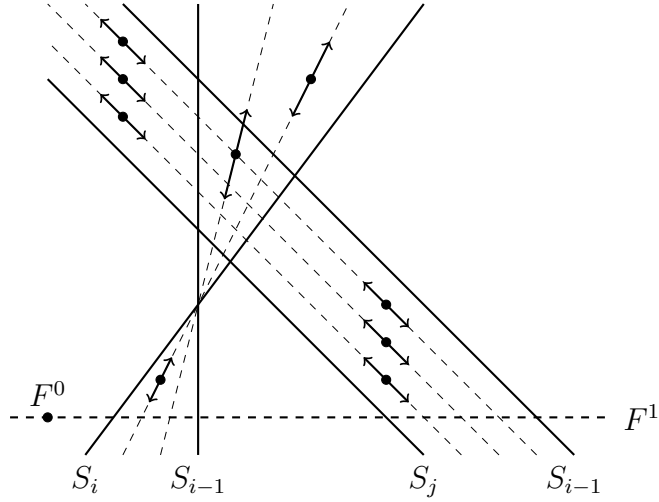
\begin{figure}[htbp]
\centering
\begin{tikzpicture}


\filldraw[fill=black, draw=black] (0,0.5) node [above] {$F^0$} circle (0.06); 
\draw[thick, dashed] (-0.5,0.5) -- ++(8,0) node [right] {$F^1$};

\draw[thick] (0.5,0) node[below] {$S_i$} -- ++(4.5,6);
\draw[thick] (2,0) node[below] {$S_{i-1}$} -- ++(0,6);
\draw[ultra thin, dashed] (1,0) -- ++(3,6);
\draw[ultra thin, dashed] (1.5,0) -- ++(1.5,6);

\coordinate (P1) at (1.5,1); 
\filldraw[fill=black, draw=black] (P1) circle (0.06); 
\draw[thick, ->] (P1) -- ++(0.15,0.3); 
\draw[thick, ->] (P1) -- ++(-0.15,-0.3); 

\coordinate (P2) at (3.5,5); 
\filldraw[fill=black, draw=black] (P2) circle (0.06); 
\draw[thick, ->] (P2) -- ++(0.25,0.5); 
\draw[thick, ->] (P2) -- ++(-0.25,-0.5); 

\coordinate (P3) at (2.5,4); 
\filldraw[fill=black, draw=black] (P3) circle (0.06); 
\draw[thick, ->] (P3) -- ++(0.15,0.6); 
\draw[thick, ->] (P3) -- ++(-0.15,-0.6); 

\draw[thick] (5,0) node[below] {$S_j$} -- ++(-5,5); 
\draw[thick] (7,0) node[below] {$S_{i-1}$} -- ++(-6,6); 
\draw[ultra thin, dashed] (5.5,0) -- ++(-5.5,5.5); 
\draw[ultra thin, dashed] (6,0) -- ++(-6,6); 
\draw[ultra thin, dashed] (6.5,0) -- ++(-6,6); 

\coordinate (P4) at (4.5,1); 
\filldraw[fill=black, draw=black] (P4) circle (0.06); 
\draw[thick, ->] (P4) -- ++(0.3,-0.3); 
\draw[thick, ->] (P4) -- ++(-0.3,0.3); 

\coordinate (P5) at (4.5,1.5); 
\filldraw[fill=black, draw=black] (P5) circle (0.06); 
\draw[thick, ->] (P5) -- ++(0.3,-0.3); 
\draw[thick, ->] (P5) -- ++(-0.3,0.3); 

\coordinate (P6) at (4.5,2); 
\filldraw[fill=black, draw=black] (P6) circle (0.06); 
\draw[thick, ->] (P6) -- ++(0.3,-0.3); 
\draw[thick, ->] (P6) -- ++(-0.3,0.3); 

\coordinate (P7) at (1,4.5); 
\filldraw[fill=black, draw=black] (P7) circle (0.06); 
\draw[thick, ->] (P7) -- ++(0.3,-0.3); 
\draw[thick, ->] (P7) -- ++(-0.3,0.3); 

\coordinate (P8) at (1,5); 
\filldraw[fill=black, draw=black] (P8) circle (0.06); 
\draw[thick, ->] (P8) -- ++(0.3,-0.3); 
\draw[thick, ->] (P8) -- ++(-0.3,0.3); 

\coordinate (P9) at (1,5.5); 
\filldraw[fill=black, draw=black] (P9) circle (0.06); 
\draw[thick, ->] (P9) -- ++(0.3,-0.3); 
\draw[thick, ->] (P9) -- ++(-0.3,0.3); 

\end{tikzpicture}
\caption{$S_i$ and $S_j$ ($i>j$)} 
\label{fig:3dimsub}
\end{figure}
The submanifolds $S_i$ ($i=1, \dots, n$) provide a stratification 
dual to minimal CW complex, in the sense that the number of 
codimension $k$ strata is equal to $b_k(M(\A))$ for $k=0, 1, 2$. 
\begin{proposition}
\label{prop:minstr}
\cite{yos-str}
Let $U=M(\A)\setminus\bigcup_{i=1}^n S_i$ and 
$S_i^\circ=S_i\setminus\bigcup_{C\in\ch_F(\A)}C$. Then, 
\begin{itemize}
\item[(1)] $S_1, \dots, S_n$ are $3$-dimensional 
contractible manifolds. 
\item[(2)] $U$ is a $4$-dimensional contractible manifold. 
\item[(3)] $M(\A)$ is decomposed as follows. 
\[
M(\A)=U\sqcup\bigsqcup_{i=1}^n S_i^\circ
\sqcup\bigsqcup_{C\in\ch_F(\A)}C. 
\]
\end{itemize}
\end{proposition}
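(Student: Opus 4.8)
The plan is to derive everything from the holomorphic map $\bm{\beta}=(\beta_1,\dots,\beta_n)\colon M(\A)\to(\C^\times)^n$ with $\beta_i=\alpha_i/\alpha_{i-1}$ and $\alpha_0=-1$, for which $S_i=\beta_i^{-1}(\R_{<0})$ and $U=\bigcap_{i=1}^{n}\beta_i^{-1}\bigl(\C^\times\setminus\R_{<0}\bigr)$: I would first prove the set-level decomposition~(3), then read off the manifold and contractibility statements. The heart of~(3) is the claim that, for $i\ne j$, every point of $S_i\cap S_j$ is real, and a real point of $S_i\cap S_j$ lies in a chamber $C$ with $C\cap F^1=\emptyset$, that is $C\in\ch_F(\A)$. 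Reality is an algebraic statement about the chain $\alpha_0,\alpha_1,\dots,\alpha_n$: the conditions $\beta_i(z),\beta_j(z)\in\R$ force the pairs $(\alpha_{i-1}(z),\alpha_i(z))$ and $(\alpha_{j-1}(z),\alpha_j(z))$ to be $\R$-collinear in $\C\cong\R^2$, and, since consecutive $\alpha$'s are exactly the forms crossed in succession by the oriented line $F^1$, the affine relations among the $\alpha$'s leave no genuinely non-real solution. Membership in $\ch_F(\A)$ is combinatorial: the normalization of \S\ref{sec:pres} forces the only chambers meeting $F^1$ to be the chamber $C_0\ni F^0$ (sign vector $-$ on every $H_k$) and, for $i=1,\dots,n$, the chamber with signs $+$ on $H_i,\dots,H_n$ and $-$ on $H_1,\dots,H_{i-1}$; reading off sign vectors, the latter lies in $S_i$ and in no other $S_k$, while $C_0$ lies in no $S_k$, so none of these chambers meets any $S_i\cap S_j$.

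Granting the claim, I would classify $z\in M(\A)$ by the index set $I(z)=\{i:\beta_i(z)\in\R_{<0}\}$. If $I(z)=\emptyset$ then $z\in U$. If $z$ is a real point of a chamber belonging to $\ch_F(\A)$, then $z$ lies in that unique chamber. Otherwise $I(z)\ne\emptyset$ and $z$ is in no $\ch_F(\A)$-chamber, so, since two distinct indices in $I(z)$ would put $z$ into such a chamber, $I(z)=\{i\}$ is a singleton and $z\in S_i\setminus\bigcup_{C\in\ch_F(\A)}C=S_i^\circ$. These three cases are mutually exclusive, giving~(3); they also identify $S_i\setminus S_i^\circ$ with the union of the chambers of $\ch_F(\A)$ contained in $S_i$.

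For the differential-topological part, $d\beta_i=\alpha_{i-1}^{-2}\bigl(\alpha_{i-1}\,d\alpha_i-\alpha_i\,d\alpha_{i-1}\bigr)$ vanishes nowhere on $M(\A)$: if $H_i\not\parallel H_{i-1}$ the constant $1$-forms $d\alpha_i,d\alpha_{i-1}$ are $\C$-independent, and if $H_i\parallel H_{i-1}$ then $\alpha_i-c\,\alpha_{i-1}$ is a nonzero constant for the relevant $c$. Hence each $\beta_i$ is a holomorphic submersion, so $S_i=\beta_i^{-1}(\R_{<0})$ is a $3$-dimensional submanifold of $M(\A)$, $U$ is open in $M(\A)$ hence a $4$-manifold, and $S_i^\circ$ (obtained from $S_i$ by removing finitely many relatively closed chambers) is again a $3$-manifold; transversality of $S_i$ and $S_j$ along the real set $S_i\cap S_j$ follows from the independence of $\beta_i$ and $\beta_j$ as local holomorphic functions. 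Contractibility of $U$: peel the coordinates of $\bm\beta$ off in turn, exhibiting $U$ via $\beta_n$ as fibered over the contractible slit plane $\C\setminus\R_{\le0}$, with fibre over $t$ the complex line $\{\alpha_n=t\,\alpha_{n-1}\}$ from which the remaining complexified hyperplanes are removed and on which the remaining ``away from $\R_{<0}$'' conditions are imposed; an induction on the number of these hyperplanes (each step removes one point from a slit plane, which stays contractible once the next slit is re-imposed) makes the fibre contractible, and a map with contractible fibres over a contractible base that admits a section has contractible total space. Contractibility of a stratum $S_i^\circ$: $\beta_i$ restricts to a fibration $S_i^\circ\to\R_{<0}$ over a contractible base, whose fibre over $t$ is the complex line $L_t=\{\alpha_i=t\,\alpha_{i-1}\}$ with the other complexified hyperplanes and the chambers of $\ch_F(\A)$ deleted; removing those chambers slits $L_t$ along exactly enough real segments and rays --- joining the punctures out to infinity --- that the complement, a priori homotopy equivalent to a nontrivial wedge of circles, becomes contractible, the basic step being that a complex line minus a point, slit along a ray issuing from that point, is contractible. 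This gives the contractibility in~(1)--(2), with the contractible pieces being the strata $U$ and $S_i^\circ$ of~(3).

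The main obstacle is precisely this last fibre computation: one must control, uniformly in $t\in\R_{<0}$, the combinatorial pattern in which the real chambers --- especially those in $\ch_F(\A)$ --- cut the complex line $L_t$, and verify that after their removal the complement is simply connected, and in fact contractible rather than merely connected. Making this uniform control feasible is exactly the purpose of the carefully chosen flag $F^0\subset F^1$ together with the induced ordering and sign normalization of $\alpha_1,\dots,\alpha_n$; the complete argument is carried out in \cite{yos-str}.
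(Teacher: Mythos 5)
The survey itself gives no proof of this proposition (it simply cites \cite{yos-str}), so your proposal has to stand on its own, and as it stands it has two genuine gaps. The routine parts are fine: $d\beta_i$ is nowhere zero on $M(\A)$, so each $S_i=\beta_i^{-1}(\R_{<0})$ is a closed $3$-dimensional submanifold and $U$ is open, and your sign-vector identification of the chambers meeting $F^1$ (and hence of $\ch_F(\A)$) is correct. But the first essential step, that $S_i\cap S_j$ ($i\neq j$) contains no non-real points, is asserted rather than proved: ``the affine relations among the $\alpha$'s leave no genuinely non-real solution'' is a restatement of the claim, and it is precisely here that the normalization of \S\ref{sec:pres} must enter. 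Concretely, write $z=x+\sqrt{-1}\,y$; $z\in S_k$ forces $d\alpha_{k-1}(y)$ and $d\alpha_k(y)$ either to have strictly opposite signs or to vanish simultaneously. Writing $\alpha_k=x_1-c_k-u_kx_2$ with $F^1=\{x_2=0\}$, the hypotheses that all intersection points lie in $\{f>0\}$ and that the $c_k$ are ordered force $u_1\le\cdots\le u_n$, so $k\mapsto d\alpha_k(y)$ is monotone for fixed $y\neq 0$ and admits at most one strict sign alternation; the remaining degenerate case (a block of parallel lines, all $d\alpha_k(y)=0$) is ruled out by the real parts, since the two parallel ``bands'' involved are disjoint. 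Without the hypothesis on the intersection points the statement is simply false (e.g.\ directions with $u$-values $0,1,0,1$ produce non-real points of $S_2\cap S_4$), so an argument of this kind cannot be skipped, and your proposal does not contain it.

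The second gap is the contractibility in (1)--(2). Your reduction to a fibre computation is only heuristic: $\beta_n|_U$ and $\beta_i|_{S_i^\circ}$ are not shown to be fibrations (the puncture/slit pattern of the fibres could a priori change with the base point), and the principle you invoke --- a map with contractible fibres over a contractible base admitting a section has contractible total space --- is false in that generality; it requires local triviality or some substitute. You yourself flag the uniform control of the fibres as ``the main obstacle'' and defer it to \cite{yos-str}, but that control \emph{is} the content of (1) and (2), so these items remain unproved in your write-up. There is also a mismatch of statements: item (1) asserts contractibility of $S_i$ itself, while your sketch addresses the strata $S_i^\circ$. In summary, the set-theoretic decomposition (3) is essentially correct modulo the reality lemma above, the manifold statements are fine, but the reality lemma needs the monotonicity argument and the contractibility claims need an actual proof (explicit deformation retractions or a verified fibration structure), as carried out in \cite{yos-str}.
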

From this stratification, one can obtain the 
presentation of the fundamental group in 
Proposition \ref{thm:minpres}.

\section{Local systems cohomology groups}

\subsection{Rank one local systems}

Let $G$ be an abelian group. 
Recall that a locally constant 
sheaf (or a local system), with local fiber $G$, on $M$ 
is a sheaf $\scL$ on $M$ such that for any point 
$p\in M$, there exists an open neighborhood $U$ of $p$ 
such that the restriction $\scL|_U$ is isomorphic to 
the constant sheaf associated to $G$. 
A local system is determined by a group homomorphism 
$\rho: \pi_1(M, p_0)\longrightarrow\Aut(G)$. 

In this section, we mainly consider the rank one local system 
over a commutative ring $R$. Namely, $\scL$ is a local system 
with local fiber isomorphic to $R$ as an $R$-module. Such a local 
system is determined by a homomorphism 
\[
\rho:\pi_1(M, p_0)\longrightarrow R^\times, 
\]
where $R^\times$ is the group of invertible elements. 
Since $R^\times$ is abelian, the map $\rho$ factors through 
the homology group, $\pi_1(M)\to H_1(M, \A)\to R^\times$, which 
is determined by assigning $\rho(\gamma_i)\in R^\times$ 
for each meridian $\gamma_i$ of the hyperplane $H_i$. 

For our purposes, it is convenient to realize a local system 
as a representation of the Deligne groupoid $\Gal(\A)$. 
Let $C_0$ and $C_1$ be  adjacent chambers separated by 
a hyperplane $H_i$. Then the sequence 
$\gamma_i=(C_0 C_1 C_0)\in\Hom(C_0, C_0)$ is corresponding to the 
meridian of $H_i$. We have $\rho(\gamma_i)\in R^\times$. 
To describe the representation of $\Gal(\A)$, we need 
$\rho((C_0, C_1))$ and $\rho((C_1, C_0))$. We set them as 
$x_i$ and $y_i$, respectively. More formally, define the extended 
ring $\widetilde{R}$ as follows. 
\begin{equation}
\widetilde{R}:=
R[x_a\mid a\in R^\times]/(x_a^2-a; a\in R^\times). 
\end{equation}
\begin{lemma}
(1) The natural map $i: R\longrightarrow \widetilde{R}$ is injective. 

(2) $x_a\in\widetilde{R}^\times$. 
\end{lemma}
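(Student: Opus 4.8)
The plan is to treat (2) by a one-line computation and to derive (1) from an explicit structural description of $\widetilde{R}$ as a free $R$-module.

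For (2): since $a\in R^\times$, the inverse $a^{-1}\in R$ maps into $\widetilde{R}$, and the defining relation $x_a^2=a$ gives $x_a\cdot(a^{-1}x_a)=a^{-1}(x_a^2)=a^{-1}a=1$. Hence $x_a\in\widetilde{R}^\times$ with $x_a^{-1}=a^{-1}x_a$. This step is entirely routine and does not even use (1).

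For (1), the key point I would establish is that $\widetilde{R}$ is a \emph{free} $R$-module with basis the ``squarefree monomials'' $x_S:=\prod_{a\in S}x_a$, where $S$ runs over the finite subsets of $R^\times$ (so $x_\emptyset=1$). Granting this, the $R$-linear map $\widetilde{R}\to R$ reading off the coefficient of $x_\emptyset$ is a retraction of the natural map $i\colon R\to\widetilde{R}$, so $i$ is split injective, which is (1). To prove the freeness claim I would build $\widetilde{R}$ by adjoining the square roots one at a time. The building block is: if $T$ is a commutative $R$-algebra that is free as an $R$-module with basis $(t_\lambda)$ and $a\in R$, then $T[x]/(x^2-a)$ is free over $R$ with basis $(t_\lambda,\,t_\lambda x)$ --- indeed $x^2-a$ is monic of degree $2$, so $T[x]/(x^2-a)=T\cdot 1\oplus T\cdot x$ as $T$-modules, and freeness over $R$ follows. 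Iterating this over a finite subset $S=\{a_1,\dots,a_k\}\subseteq R^\times$ shows that $\widetilde{R}_S:=R[x_{a_1},\dots,x_{a_k}]/(x_{a_1}^2-a_1,\dots,x_{a_k}^2-a_k)$ is free over $R$ on $\{x_T:T\subseteq S\}$. Since $\widetilde{R}=\varinjlim_S\widetilde{R}_S$ over the directed set of finite subsets of $R^\times$, and each transition map $\widetilde{R}_S\hookrightarrow\widetilde{R}_{S'}$ (for $S\subseteq S'$) sends basis elements to basis elements, the colimit $\widetilde{R}$ is free over $R$ on $\{x_S:S\subseteq R^\times\ \text{finite}\}$, as wanted.

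The only genuine obstacle is bookkeeping: making ``adjoin one square root at a time'' precise when $R^\times$ is infinite (possibly uncountable). The colimit formulation above handles this cleanly, since filtered colimits of free modules along inclusions of bases are again free. An alternative would be to define $\widetilde{R}$ by hand as the free $R$-module on the finite subsets of $R^\times$ with multiplication $x_S\cdot x_{S'}=\bigl(\prod_{a\in S\cap S'}a\bigr)\,x_{S\,\triangle\,S'}$ and verify that it satisfies the universal property of the quotient; this works, but checking associativity of that product is a slightly tedious (if elementary) computation, so I would prefer the colimit route.
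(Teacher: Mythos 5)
Your proof is correct and rests on the same key fact as the paper's: adjoining a root of the monic polynomial $x^2-a$ gives a free, hence injective, extension, combined with the observation that any element of the kernel involves only finitely many of the generators $x_a$ (which you package as a filtered colimit, while the paper writes a kernel element as a finite sum of relators and climbs a finite tower $R\to A_1\to\cdots\to A_m$). Your version proves slightly more --- an explicit $R$-basis of $\widetilde{R}$ and split injectivity of $i$ --- which is in fact what the paper implicitly uses immediately after the lemma when it asserts that $r\mapsto rx_a$ is an isomorphism $R\to Rx_a$.
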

\begin{proof}
(1) Let $r\in R$. Suppose $r\in\Ker i$. Then there exist 
$a_1, \dots, a_m\in R^\times$ and 
$f_1, \dots, f_m\in R[x_1, x_2, \dots, x_m]$ such that 
\[
r=\sum_{s=1}^m f_s(x_1, \dots, x_m)\cdot (x_s^2-a_s). 
\]
Let $A_k=R[x_1, \dots, x_k]/(x_s^2-a_s; s=1, \dots, k)$, 
for $k=1, \dots, m$. Then $r$ is contained in the kernel of 
$R\longrightarrow A_k$. 
Note that 
\[
A_{s}=A_{s-1}[x_s]/(x_s^2-a_s). 
\]
Since the ideal is generated by a monic polynomial, it is clear that 
the natural map $i_s: A_{s-1}\longrightarrow A_{s}$ is injective. 

(2) 
This is obvious from $x_a\cdot (x_a a^{-1})=1$ in $\widetilde{R}$. 
\end{proof}
From the above lemma, 
the multiplication map 
$R\longrightarrow Rx_a, r\longmapsto rx_a$ gives an isomorphism 
of $R$-modules. 

Let $\scF=(F^0\subset\cdots\subset F^{\ell-1})$ be a generic 
flag as in Remark \ref{rem:flagnearinfty}. 
Let us consider $F^0=p_0$ to be the base point of $M=M(\A)$. 
Let $\rho: \pi_1(M, p)\longrightarrow R^\times$ be a group 
homomorphism which is determined by 
$\rho(\gamma_i)=a_i\in R^\times$. Now we construct the 
representation of $\Gal(\A)$. 
Let $C\in\ch(\A)$. Then, define the submodule 
$\rho(C)\subset\widetilde{R}$ by 
\[
\rho(C):=R\cdot\prod_{H_i\in\Sep(C_0, C)}x_{a_i}, 
\]
where $C_0$ is the chamber containing $p_0=F^0$. 
\begin{proposition}
Denote the category of $R$-modules by $\Mod_R$. 
The correspondence $\rho: \ch(\A)\longrightarrow\Mod_R: 
C\longmapsto \rho(C)$ and $\rho: \Hom(C, C')\ni (C, C')\longmapsto 
x_{a_i}$, where $C$ and $C'$ are adjacent and separated by 
$H_i$, gives a representation of the groupoid $\Gal(\A)$. 
\end{proposition}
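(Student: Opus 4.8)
The plan is to interpret the required ``representation of $\Gal(\A)$'' as a functor $\Gal(\A)\to\Mod_R$ and to build it by first producing a functor on $\Gal^+(\A)$ and then invoking the fact that $\Gal(\A)$ is obtained from $\Gal^+(\A)$ by formally inverting all morphisms. Since $\Gal^+(\A)$ is the free category on the adjacency graph $\Gamma(\A)$ modulo the congruence generated by geodesic flips, a functor out of it amounts to (i) a choice of object $\rho(C)\in\Mod_R$ for each chamber, (ii) a morphism $\rho(C)\to\rho(C')$ for each oriented edge $(C,C')$ of $\Gamma(\A)$, and (iii) invariance under geodesic flips. For (i) I take $\rho(C)=R\cdot\prod_{H_i\in\Sep(C_0,C)}x_{a_i}\subseteq\widetilde R$; this is free of rank one over $R$ because the product of the units $x_{a_i}$ is a unit of $\widetilde R$ and $R\hookrightarrow\widetilde R$ by the Lemma. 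For (ii), to an ordered adjacent pair $(C,C')$ with $\Sep(C,C')=\{H_i\}$ I assign multiplication by $x_{a_i}$.

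First I would check that multiplication by $x_{a_i}$ really is an isomorphism $\rho(C)\to\rho(C')$. For any $H_j\neq H_i$ the chambers $C,C'$ lie on the same side of $H_j$, so $\Sep(C_0,C)$ and $\Sep(C_0,C')$ differ exactly in the membership of $H_i$. Hence either $\rho(C')=x_{a_i}\rho(C)$, and multiplication by $x_{a_i}$ is visibly an isomorphism onto $\rho(C')$; or $\rho(C)=x_{a_i}\rho(C')$, in which case $x_{a_i}\rho(C)=x_{a_i}^2\rho(C')=a_i\rho(C')=\rho(C')$ since $a_i\in R^\times$, and again we get an isomorphism (with inverse multiplication by $x_{a_i}^{-1}=a_i^{-1}x_{a_i}$). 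The key step is (iii). I would use the elementary fact that a geodesic path from $C$ to $D$ crosses each hyperplane of $\Sep(C,D)$ exactly once and no other hyperplane: each step crosses one wall, and any crossing of a wall outside $\Sep(C,D)$, or any repeated crossing of a wall inside it, would have to be undone by an additional step, forcing the length to exceed $d(C,D)=\#\Sep(C,D)$. Therefore the composite of $\rho$ along any geodesic from $C$ to $D$ equals multiplication by $\prod_{H_i\in\Sep(C,D)}x_{a_i}$, and because $\widetilde R$ is commutative this element is independent of the order of the crossings, hence of the chosen geodesic. Two paths related by a single geodesic flip share an unaffected prefix and suffix and replace one geodesic middle segment by another with the same endpoints, so they have equal composite images; as geodesic flips generate the congruence, the assignment descends to a functor $\Gal^+(\A)\to\Mod_R$.

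Finally, since every generating morphism is sent to multiplication by a unit of $\widetilde R$, the functor sends all morphisms of $\Gal^+(\A)$ to isomorphisms, so it extends uniquely along $\Gal^+(\A)\to\Gal(\A)$ to the desired representation $\rho:\Gal(\A)\to\Mod_R$; in particular the negative steps are forced to go to the inverse maps, which makes the explicit cancellation relations of $\Gal(\A)$ automatic. As a consistency check I would verify that the meridian $\gamma_i=(C_0*_+C_1*_+C_0)$ acts on $\rho(C_0)\cong R$ by multiplication by $x_{a_i}^2=a_i$, matching the local system $\rho$ with $\rho(\gamma_i)=a_i$. I expect the main obstacle to be exactly the bookkeeping around $x_{a_i}^2=a_i$: one must use ``multiplication by $x_{a_i}$'' for \emph{both} orientations of a wall in $\Gal^+(\A)$ (rather than $x_{a_i}$ and $x_{a_i}^{-1}$), which is what makes a double crossing act nontrivially by $a_i$; the case analysis above is precisely what certifies that this choice is consistent on objects and under geodesic flips.
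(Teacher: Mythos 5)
Your proof is correct, and it is organized rather differently from the one in the paper, in a way that is worth noting. The paper's proof is a two-line reduction: it declares that it suffices to check that the image of any loop $\gamma\in\Hom(C,C)$ is an automorphism of $\rho(C)$, and then observes that a loop crosses each wall an even number of times, so by commutativity of $\widetilde R$ its image is $\prod_i x_{a_i}^{2k_i}=\prod_i a_i^{k_i}\in R^\times$, which preserves the submodule $\rho(C)$. Well-definedness on equivalence classes of (signed) paths and the fact that an edge $(C,C')$ actually carries $\rho(C)$ onto $\rho(C')$ are left implicit. You instead build the functor on $\Gal^+(\A)$ from the generators-and-relations description (edges of the adjacency graph modulo geodesic flips), verify the two points the paper glosses over --- the case analysis showing $x_{a_i}\rho(C)=\rho(C')$ in both orientations via $x_{a_i}^2=a_i$, and flip-invariance via the standard fact that a geodesic crosses exactly the walls of $\Sep(C,D)$ once each --- and then extend to $\Gal(\A)$ by the universal property of the localization, which makes the cancellation relations automatic. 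Both arguments ultimately rest on the same engine, the commutativity of $\widetilde R$ together with a parity count of wall crossings; what your route buys is a complete verification of functoriality (the paper's ``it is sufficient to show'' is really a consequence of the checks you carry out, not a substitute for them), at the cost of being longer. Your closing sanity check that the meridian acts by $x_{a_i}^2=a_i$, consistent with the local system, is exactly the right thing to confirm.
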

\begin{proof}
It is sufficient to show that the image $\rho(\gamma)$ of 
$\gamma\in\Hom(C, C)$ gives a automorphism 
of $\rho(C)$. Since $R^\times$ and $\widetilde{R}$ are abelian, 
each $x_{a_i}$ is multiplied an even number of times. 
In other words, the action of $\rho(\gamma)$ on $\rho(C)$ is 
a multiplication by $\prod_i x_{a_i}^{2k_i}$ ($k_i\in\Z$), 
which is equal to $\prod_i a_i^{k_i}\in R^\times$. 
\end{proof}

\begin{definition}
Let $C, C\in\ch(\A)$. Define $\Delta(C, C')\in\widetilde{R}$ 
by 
\[
\Delta(C, C')=
\prod_{H_i\in\Sep(C, C')} x_{a_i}-
\prod_{H_i\in\Sep(C, C')} x_{a_i}^{-1}. 
\]
\end{definition}

\subsection{The degree map and twisted cochain complex}

To describe the twisted cochain complex, we need the notion of 
the degree map \cite{yos-lef, bai-yos, yos-ch}
\[
\deg: \ch_{\scF}^k(\A)\times \ch_{\scF}^{k+1}(\A)\longrightarrow\Z. 
\]
Let $B=B^k\subset F^k$ be a $k$-dimensional ball of 
sufficiently large radius such that every $0$-dimensional 
intersection $X\in L_0(\A\cap F^k)$ is contained in the interior 
of $B$. Let $C'\in\ch_{\scF}^{k+1}(\A)$. Then, consider the 
vector field $U^{C'}$ on $B$, that is $U^{C'}(x)\in T_xF^k$ for 
$x\in B$, satisfying the following conditions. 
\begin{itemize}
\item 
$U^{C'}(x)\neq 0$ for $x\in \partial(\overline{C}\cap B)$ with 
$C\in \ch_{\scF}^k(\A)$. 
\item 
Let $x\in\partial B$, then $U^{C'}(x)$ directs to the inside of $B$. 
\item 
If $x\in H$ for some $H\in\A$, then $U^{C'}(x)\notin T_xH$ and 
directs the side containing $C'$. 
\end{itemize}
Roughly speaking $U^{C'}$ is a vector field on $B$ directing 
towards the chamber $C'$. 
\begin{definition}
Define $\deg(C, C')\in\Z$ by the degree of 
the Gauss map on the boundary $\partial(\overline{C}\cap B)$
\[
\deg(C, C'):=
\deg
\left(
\left.
\frac{U^{C'}}{|U^{C'}|}
\right|_{\partial(\overline{C}\cap B)}: 
\partial(\overline{C}\cap B)\longrightarrow S^{k-1}
\right). 
\]
\end{definition}
The next result was first formulated for 
$R=\C$ in \cite{yos-lef}. Recently, the case $R=\Z$ was 
considered in \cite{sug-cdo}. 
\begin{proposition}
\label{prop:twimincpx}
\cite{yos-lef}
Let $\rho:\pi_1(M(\A))\longrightarrow R^\times$ be an $R$-rank one 
local system. Let 
\[
\scC^k(\A, \rho):=\bigoplus_{C\in\ch_\scF^k(\A)}\rho(C). 
\]
Define the map $\nabla_\rho: 
\scC^k(\A, \rho)\longrightarrow \scC^{k+1}(\A, \rho)$ by 
\[
\nabla_\rho(y)=
\sum_{C'\in\ch_\scF^{k+1}}\deg(C, C')\cdot y \cdot\Delta(C, C'), 
\]
where $y\in\rho(C)$. Then 
$(\scC^\bullet (\A, \rho), \nabla_\rho)$ gives a cochain complex 
such that 
\[
H^k(\scC^\bullet (\A, \rho), \nabla_\rho)\simeq 
H^k(M(\A), \scL_\rho). 
\]
\end{proposition}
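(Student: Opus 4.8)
The plan is to identify $(\scC^\bullet(\A,\rho),\nabla_\rho)$ with the cellular cochain complex, twisted by $\scL_\rho$, of the minimal CW complex built from the affine Lefschetz theorem in \S\ref{subsec:attach}. By Proposition~\ref{prop:min} and Proposition~\ref{prop:characterizcells} (see \cite{yos-lef}), $M(\A)$ is homotopy equivalent to a minimal CW complex $X$ whose $k$-cells are the unstable cells $\sigma_C$ indexed by $C\in\ch_\scF^k(\A)$; since $\#\ch_\scF^k(\A)=b_k(M(\A))$ by Proposition~\ref{prop:chk}, the untwisted cellular chain complex already has vanishing differentials. For any local system $\scL_\rho$ the cohomology $H^\bullet(M(\A),\scL_\rho)$ is computed by the cellular cochain complex $C^\bullet(X,\scL_\rho)$, so it suffices to match this complex, module by module and differential by differential, with $(\scC^\bullet(\A,\rho),\nabla_\rho)$.

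First I would fix the bookkeeping of the coefficients. Taking $p_0=F^0$ (lying in the chamber $C_0$) as base point, for each $C\in\ch(\A)$ choose the path obtained from a minimal gallery $C_0,\dots,C$; its class crosses exactly the meridians $\gamma_i$ with $H_i\in\Sep(C_0,C)$. Parallel transport of $\scL_\rho$ along this path identifies the stalk at $\sigma_C$ with the fixed fiber $R$ over $p_0$, and the role of the extended ring $\widetilde R$ and of the submodule $\rho(C)=R\cdot\prod_{H_i\in\Sep(C_0,C)}x_{a_i}$ is precisely to record \emph{which} such identification is being used, so that holonomies can be compared consistently across all cells. With these choices $C^k(X,\scL_\rho)=\bigoplus_{C\in\ch_\scF^k(\A)}\rho(C)=\scC^k(\A,\rho)$.

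Next I would compute the coboundary. For $C\in\ch_\scF^k(\A)$ and $C'\in\ch_\scF^{k+1}(\A)$ the incidence coefficient is read from the attaching map $\partial\sigma_{C'}\colon S^k\to X^{(k)}$. By Proposition~\ref{prop:characterizcells} the image $\sigma_{C'}(D^{k+1})$ meets the flag-chamber $C$ transversally in a finite set, and the signed count of how $\partial\sigma_{C'}$ covers the cell $\sigma_C$ is, as in the untwisted case of \cite{yos-lef}, the Gauss-map degree $\deg(C,C')$ of the vector field $U^{C'}$ on the ball $B\subset F^k$. What the local system adds is a holonomy factor: the boundary sphere approaches $\overline{C}$ through the two components of $\partial(\overline{C}\cap B)$ lying on the two sides cut out by the hyperplanes in $\Sep(C,C')$, and transporting $\scL_\rho$ back to $p_0$ along the two resulting arcs differs by the loop $\prod_{H_i\in\Sep(C,C')}\gamma_i$, i.e.\ by the unit $\prod a_i\in R^\times$. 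Carrying the usual boundary-orientation sign, the two contributions assemble into $\prod_{H_i\in\Sep(C,C')}x_{a_i}-\prod_{H_i\in\Sep(C,C')}x_{a_i}^{-1}=\Delta(C,C')$; and $y\cdot\Delta(C,C')\in\rho(C')$ because multiplication by $\prod_{H_i\in\Sep(C,C')}x_{a_i}^{\pm1}$ maps $\rho(C)$ isomorphically onto $\rho(C')$ (using that $\Sep(C_0,C')$ is the symmetric difference of $\Sep(C_0,C)$ and $\Sep(C,C')$ and that $\prod a_i$ is a unit). Hence the $\scL_\rho$-twisted cellular coboundary of $X$ is exactly $\nabla_\rho$, and $H^k(\scC^\bullet(\A,\rho),\nabla_\rho)\simeq H^k(M(\A),\scL_\rho)$ follows.

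The main obstacle is the local computation producing the factor $\deg(C,C')\cdot\Delta(C,C')$: one must verify, with the correct signs and exponents, that $\partial\sigma_{C'}$ wraps onto $\sigma_C$ from precisely the two sides determined by $\Sep(C,C')$ and that the holonomies along the two return arcs are $1$ and $\prod_{H_i\in\Sep(C,C')}a_i$ relative to the chosen normalization of stalks. The untwisted version — that the incidence number is $\deg(C,C')$ — is already in \cite{yos-lef}, so the genuinely new work is the holonomy bookkeeping layered on top, and the delicate point is keeping the orientation conventions of the unstable cells, of the Gauss map, and of the meridians mutually consistent. An alternative route avoids the transcendental attaching maps altogether: put the twisted coefficient system on the Salvetti complex of \S\ref{subsec:sal}, apply the Salvetti--Settepanella acyclic matching \cite{sal-sett} associated with the flag $\scF$, and read off the critical cells (again indexed by $\ch_\scF^\bullet(\A)$) and their incidence numbers; but matching the resulting complex with $(\scC^\bullet(\A,\rho),\nabla_\rho)$ demands the same holonomy analysis, so the essential difficulty is unchanged.
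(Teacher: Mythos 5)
Your proposal is correct and follows essentially the same route as the paper: the paper states this result with only a citation to \cite{yos-lef}, but the machinery it assembles beforehand (the minimal CW structure with $\ell$-cells given by the unstable cells of the Morse function, Proposition \ref{prop:characterizcells}, and the Gauss-map definition of $\deg(C,C')$) is exactly what you use, identifying $(\scC^\bullet(\A,\rho),\nabla_\rho)$ with the twisted cellular cochain complex and deriving $\Delta(C,C')$ from the two holonomies with which $\partial\sigma_{C'}$ returns past $C$. Your accounting of the role of $\widetilde R$ and of the square roots $x_{a_i}$ as a symmetric normalization of the stalk identifications matches the intended argument, and your flagged ``main obstacle'' (the local sign/holonomy computation) is indeed where the real work in \cite{yos-lef} lies.
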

\begin{remark}
Proposition \ref{prop:twimincpx} has been applied 
to the computation of the first Betti number of 
the Milnor fiber \cite{yos-mil, yos-dou}, 
complex rank one local systems \cite{yos-vie, yos-q, yos-loc}, 
and Aomoto complexes \cite{bai-yos, ty-res}. 
Recently, Sugawara \cite{sug-cdo} used it to settle 
an integral version of Cohen-Dimca-Orlik type 
vanishing theorem \cite{cdo}. See also 
for an alternative approach \cite{lmw}. 
\end{remark}

\section{Homotopy groups}

\subsection{$K(\pi, 1)$ arrangements}

Recall that a connected topological space $X$ is called 
$K(\pi, 1)$ if the higher homotopy groups are vanishing, 
namely, $\pi_k(X)=0$ for $k\geq 2$. 
Naturally, we call a complex arrangement $\A$ a $K(\pi, 1)$ 
arrangement if the complement $M(\A)$ is a $K(\pi, 1)$ space. 
As we saw in Example \ref{ex:cbraid}, the braid arrangement 
is $K(\pi, 1)$. Based on this and other examples, 
Brieskorn \cite{bri} conjectured the $K(\pi, 1)$-ness of 
certain arrangements related to reflection groups. 
The $K(\pi, 1)$-ness has been settled in many cases, 
for example, 
\begin{itemize}
\item 
Simplicial arrangements \cite{del-kpi1}, 
\item 
Many $2$-dimensional real arrangements \cite{fal-kpi1}, 
\item 
Many reflection groups \cite{cha-dav}, 
\item 
Complex reflection groups \cite{bessis}, 
\item 
Affine Weyl arrangements \cite{pao-sal}, 
\end{itemize}
and so on. 
See \cite{fr-hom1, fr-hom2, par-conj} for surveys on 
this topic. 

\subsection{Localization and Hurewicz map}

We recall several basic facts about homotopy groups of $M(\A)$. 
The following is essentially due to M. Oka  
\cite[Lemma 1.1.]{par-del}. 
\begin{proposition}
Let $A$ be an arrangement in $\C^\ell$. Let $X\in L(\A)$ and 
$i: M(\A)\hookrightarrow M(\A_X)$ be the natural inclusion. 
Then $i$ induces the split surjective homomorphism 
\[
i_*: \pi_k(M(\A))\longrightarrow \pi_k(M(\A_X))
\]
of groups for $k\geq 1$. In other words, there exists a group 
injection $f:\pi_k(M(\A_X))\longrightarrow \pi_k(M(\A))$ such that 
$i_*\circ f:\pi_k(M(\A_X))\longrightarrow \pi_k(M(\A_X))$ is 
the identity. 
\end{proposition}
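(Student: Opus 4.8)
The plan is to produce a homotopy section of $i$ that becomes visible in a suitable neighbourhood of a generic point of $X$. Write $\A_X=\{H\in\A\mid H\supseteq X\}$; since $\A_X\subseteq\A$ we have $M(\A)\subseteq M(\A_X)$, so $i$ is genuinely an inclusion. Each hyperplane $H\in\A\setminus\A_X$ satisfies $X\not\subseteq H$, so $X\cap H$ is a nowhere dense closed subset of $X$; as there are only finitely many such, a generic point $p\in X$ lies on no member of $\A\setminus\A_X$. Fix affine coordinates identifying $\C^\ell=\C^d\times\C^{\ell-d}$ with $X=\C^d\times\{\bm 0\}$, where $d=\dim X$. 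Every $H\in\A_X$ contains $X$, hence has the form $H=\C^d\times\overline H$ for a linear hyperplane $\overline H\subseteq\C^{\ell-d}$ through the origin; consequently
\[
M(\A_X)=\C^d\times M(\overline{\A}_X),\qquad\text{where }\overline{\A}_X=\{\,\overline H\mid H\in\A_X\,\}\text{ is central in }\C^{\ell-d}.
\]
Choose Euclidean balls $D\subseteq\C^d$ and $D'\subseteq\C^{\ell-d}$ centred at the two coordinates of $p$, small enough that $N:=D\times D'$ is disjoint from every $H\in\A\setminus\A_X$. Then removing $\bigcup_{H\in\A}H$ or $\bigcup_{H\in\A_X}H$ from $N$ gives the same set, so
\[
W:=N\cap M(\A)=N\cap M(\A_X)=D\times\bigl(D'\cap M(\overline{\A}_X)\bigr).
\]
We now have inclusions $j\colon W\hookrightarrow M(\A)$ and $i\colon M(\A)\hookrightarrow M(\A_X)$, and the crux is to show the composite $i\circ j\colon W\hookrightarrow M(\A_X)$ is a homotopy equivalence.

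For this, observe that $D\hookrightarrow\C^d$ is a homotopy equivalence and that $D'\cap M(\overline{\A}_X)\hookrightarrow M(\overline{\A}_X)$ is one as well: since $\overline{\A}_X$ is central, $M(\overline{\A}_X)$ is invariant under the scaling $z\mapsto tz$ for $t\in\R_{>0}$, so if $\varepsilon$ denotes the radius of $D'$, the radial homotopy $G(z,s)=z\cdot\bigl(1-s+s\min(1,\varepsilon/\|z\|)\bigr)$ stays in $M(\overline{\A}_X)$ for all $s\in[0,1]$, fixes $D'\cap M(\overline{\A}_X)$ pointwise, and deformation retracts $M(\overline{\A}_X)$ onto it. Taking the product of these two homotopy equivalences identifies $i\circ j$ with a homotopy equivalence, under the product decompositions $W=D\times(D'\cap M(\overline{\A}_X))$ and $M(\A_X)=\C^d\times M(\overline{\A}_X)$.

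It remains to read off the algebra. Fix the basepoint $p_0\in W$ and use it for all homotopy groups. Since $i\circ j$ is a homotopy equivalence, $(i\circ j)_*=i_*\circ j_*\colon\pi_k(W,p_0)\to\pi_k(M(\A_X),p_0)$ is an isomorphism for every $k\geq 1$; in particular $j_*$ is injective. Define
\[
f:=j_*\circ(i_*\circ j_*)^{-1}\colon\pi_k(M(\A_X))\longrightarrow\pi_k(W)\longrightarrow\pi_k(M(\A)).
\]
Then $i_*\circ f=(i_*\circ j_*)\circ(i_*\circ j_*)^{-1}=\operatorname{id}$, which simultaneously shows that $i_*$ is surjective and that $f$ is a splitting; moreover $f$ is injective, being the composite of an isomorphism with the injective map $j_*$. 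The only step carrying real content is the homotopy equivalence $W\simeq M(\A_X)$, and within it the one subtle point is verifying that the radial retraction never leaves $M(\overline{\A}_X)$ — precisely the place where centrality of $\overline{\A}_X$ enters; the remainder is a formal diagram chase.
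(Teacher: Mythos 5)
Your argument is correct and is essentially the paper's own proof (attributed there to M.~Oka): choose a point of $X$ off the hyperplanes of $\A\setminus\A_X$, take a small neighbourhood $W$ with $W\cap M(\A)=W\cap M(\A_X)$, show that $W\hookrightarrow M(\A_X)$ is a homotopy equivalence using the local product structure $M(\A_X)\simeq\C^d\times M(\overline{\A}_X)$ and centrality of $\overline{\A}_X$, and then split $i_*$ through $j_*$ exactly as you do. You fill in details the paper omits (the explicit radial retraction); the only cosmetic slip is that your homotopy $G$ retracts onto the \emph{closed} ball of radius $\varepsilon$, so either take $D'$ closed or replace $\varepsilon$ by $\varepsilon/2$ in the formula.
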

\begin{proof}
Let $p\in X\setminus\bigcup_{H\in\A\smallsetminus\A_X}H$. 
Take a small convex open neighborhood $U$ of $p$ in such a 
way that $B\cap H=\emptyset$ for $H\in\A\smallsetminus\A_X$. 
Then 
we have inclusions $j:U\cap M(\A)\hookrightarrow M(\A)$ 
and $i: M(\A) \hookrightarrow M(\A_X)$. Then 
$i\circ j: U\cap M(\A)\longrightarrow M(\A)$ is 
a homotopy equivalence. Hence the composition of induced 
homomorphism 
\[
\pi_k(M(\A_X))
\stackrel{j_*}{\longrightarrow}
\pi_k(M(\A))
\stackrel{i_*}{\longrightarrow}
\pi_k(M(\A_X))
\]
is isomorphic. 
\end{proof}
\begin{corollary}
(1) The fundamental group $\pi_1(M(\A_X))$ embeds into 
$\pi_1(M(\A))$. 

(2) If $\A$ is $K(\pi, 1)$, so is the localization $\A_X$. 
\end{corollary}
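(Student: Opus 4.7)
The plan is to derive both claims directly from the preceding Proposition, which supplies, for every $k \geq 1$, a split surjection $i_* : \pi_k(M(\A)) \twoheadrightarrow \pi_k(M(\A_X))$ together with a section $f : \pi_k(M(\A_X)) \hookrightarrow \pi_k(M(\A))$ satisfying $i_* \circ f = \mathrm{id}$.

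For part (1), I would simply specialize to $k=1$. The identity $i_* \circ f = \mathrm{id}$ on $\pi_1(M(\A_X))$ forces the section $f : \pi_1(M(\A_X)) \to \pi_1(M(\A))$ to be injective, as any group homomorphism admitting a left inverse is injective. This exhibits $\pi_1(M(\A_X))$ as a subgroup of $\pi_1(M(\A))$ and settles (1).

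For part (2), I would use that $M(\A_X)$ is connected (it is the complement of a complex hyperplane arrangement in $\C^\ell$), so being $K(\pi,1)$ amounts to the vanishing $\pi_k(M(\A_X)) = 0$ for all $k \geq 2$. Fix such a $k$. By hypothesis $M(\A)$ is a $K(\pi,1)$, so $\pi_k(M(\A)) = 0$. Surjectivity of $i_* : \pi_k(M(\A)) \twoheadrightarrow \pi_k(M(\A_X))$ then immediately gives $\pi_k(M(\A_X)) = 0$, which is what is needed.

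Neither step involves a real obstacle: once the Proposition is granted, both claims are formal consequences. If anything, the only thing to be careful about is the distinction between ``split surjection of groups'' and ``split short exact sequence''; for (1) I only need the former, which is built into the statement of the Proposition through the existence of $f$. Thus the whole argument is a short two-line deduction for each part, and no further geometry beyond the Proposition is required.
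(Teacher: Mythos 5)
Your proposal is correct and is exactly the deduction the paper intends: the corollary is stated as an immediate consequence of the preceding proposition, with (1) following from the injectivity of the section $f$ (equivalently, from $i_*\circ f=\mathrm{id}$) and (2) from the surjectivity of $i_*$ on $\pi_k$ for $k\geq 2$ together with connectedness of $M(\A_X)$. No further argument is needed, so your two-line deduction matches the paper's approach.
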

One of the difficulties of studying the homotopy groups of $M(\A)$ 
is the following vanishing of Hurewicz maps. One can not 
detect elements of homotopy groups as elements in the 
homology groups. 

\begin{proposition}
\label{prop:randell}
(Randell \cite{ran-hom}) 
Let $\A$ be an arrangement in $\C^\ell$. Then for any 
$k\geq 2$, the Hurewicz map 
\[
h: \pi_k(M)\longrightarrow H_k(M, \Z)
\]
is vanishing. 
\end{proposition}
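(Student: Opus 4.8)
The plan is to use the minimality of $M = M(\A)$ (Proposition~\ref{prop:min}) together with the fact that the universal cover $\widetilde M$ is highly connected in a range, and then invoke the naturality of the Hurewicz homomorphism. The point of Proposition~\ref{prop:randell} is that even though $\pi_k(M)$ can be enormous for $k \geq 2$ (as the failure of $K(\pi,1)$-ness shows), none of it is seen homologically. The cleanest route passes through the universal cover.

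First I would reduce to the universal cover. By definition $\pi_k(M) \simeq \pi_k(\widetilde M)$ for $k \geq 2$, and the Hurewicz map $h\colon \pi_k(M) \to H_k(M,\Z)$ factors as
\[
\pi_k(M) \simeq \pi_k(\widetilde M) \xrightarrow{\ \widetilde h\ } H_k(\widetilde M, \Z) \xrightarrow{\ p_*\ } H_k(M, \Z),
\]
where $p\colon \widetilde M \to M$ is the covering projection and $\widetilde h$ is the Hurewicz map for $\widetilde M$ (here one uses that $\widetilde M$ is simply connected, so $h_{\widetilde M}$ is already the honest Hurewicz map with no $\pi_1$-action to worry about). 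Thus it suffices to show that $p_*\colon H_k(\widetilde M, \Z) \to H_k(M, \Z)$ vanishes for $k \geq 2$, or even just on the image of $\widetilde h$.

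Next I would bring in minimality. By Proposition~\ref{prop:min}, $M$ is homotopy equivalent to a minimal CW complex $X$ of dimension $\ell$ in which the number of $k$-cells equals $b_k(M)$ and all the boundary maps in the cellular chain complex vanish. Lift this CW structure to $\widetilde M$: the cellular chain complex $C_\bullet(\widetilde X)$ is a complex of free $\Z[\pi_1]$-modules with one generator in degree $k$ for each $k$-cell of $X$, and the augmentation $\Z[\pi_1] \to \Z$ recovers $C_\bullet(X)$ with its zero differentials. The key algebraic input is that, because the differentials of $C_\bullet(X)$ are zero, the differentials of $C_\bullet(\widetilde X)$ have \emph{all entries in the augmentation ideal} $I = \ker(\Z[\pi_1] \to \Z)$. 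Now, the map $p_*$ on homology is induced by the chain map $C_\bullet(\widetilde X) \to C_\bullet(X)$ given by augmentation. An element of $H_k(\widetilde X)$ in the image of the Hurewicz map comes from a spherical class, hence from a cycle $z \in C_k(\widetilde X)$ that bounds \emph{after augmentation} but need not bound before; the content to extract is that any spherical cycle lies in $I \cdot C_k(\widetilde X)$, so its augmentation is zero. One way to see this: a based map $S^k \to \widetilde M$ is null-homotopic after composing with $p$ is false in general, but the cellular cycle representing a spherical class in $\widetilde X$ can be taken to have coefficients summing to zero over each $\pi_1$-orbit — equivalently, it lies in the submodule $I\cdot C_k$, because the Hurewicz image in $H_k(\widetilde X) = Z_k(\widetilde X)$ (no boundaries in this degree, by minimality of $\widetilde X$ in the relevant sense) is exactly the set of cycles annihilated by augmentation. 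Applying $p_*$, which is augmentation, kills it.

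\textbf{The hard part} will be making the last step rigorous: identifying precisely why the Hurewicz image in $H_k(\widetilde M)$ lands in $I \cdot C_k(\widetilde X) = \ker\big(H_k(\widetilde X) \to H_k(X)\big)$. The clean statement underlying this is a lemma of Randell: for a minimal CW complex $X$, the homomorphism $\pi_k(\widetilde X) \to H_k(\widetilde X) \to H_k(X)$ is zero for $k \geq 2$, which follows from the observation that $C_\bullet(\widetilde X)$ has differentials with entries in $I$ combined with the fact that a spherical homology class, being represented by an actual cycle in the top-degree-range chain group where no boundaries exist, must be augmentation-trivial because $H_k(S^k) \to H_k(\widetilde M)$ lifts a map whose image in $M$ is null on $\pi_1$, forcing the covering-space count of cells to cancel. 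An alternative, perhaps more robust, approach to this last step is to use the multiplicativity of the Hurewicz map under the action of $\pi_1$: for $g \in \pi_1$ and a spherical class $\alpha$, one has $h(\alpha) = h(g \cdot \alpha)$ in $H_k(\widetilde M)$ after $p_*$, while $g$ acts on $H_k(\widetilde M)$ by deck transformations; averaging — or rather, using that $p_* (g \cdot c) = p_*(c)$ together with $g\cdot c - c \in I \cdot C_\bullet$ — forces $p_* h(\alpha)$ into the intersection of all such relations, which is zero. I would write up whichever of these two arguments the reader of the excerpt's cited source \cite{ran-hom} phrases most economically, citing minimality (Proposition~\ref{prop:min}) as the one nontrivial ingredient and presenting the rest as the standard augmentation-ideal argument.
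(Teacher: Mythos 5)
There is a genuine gap, and it lies exactly where you flagged ``the hard part.'' The lemma your whole argument hangs on --- that for a minimal CW complex $X$ the composite $\pi_k(\widetilde X)\to H_k(\widetilde X)\to H_k(X)$ vanishes for $k\geq 2$ --- is false as stated, and it is not a lemma of Randell. The sphere $S^k$ itself ($k\geq 2$) is a minimal CW complex with trivial $\pi_1$, so $\widetilde X=X$, the augmentation ideal $I$ is zero, and the Hurewicz map is an isomorphism rather than zero. Concretely, minimality only tells you that the differentials of $C_\bullet(\widetilde X)$ have entries in $I$; it gives you no control over where a \emph{spherical cycle} sits inside $C_k(\widetilde X)$, and your two attempted justifications (``the Hurewicz image is exactly the set of cycles annihilated by augmentation,'' and the orbit argument ``$p_*(g\cdot c)=p_*(c)$ forces $p_*(c)=0$'') are respectively circular and a non sequitur --- constancy of $p_*$ on $\pi_1$-orbits does not force the common value to be zero. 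Some extra input about arrangement complements, beyond minimality, is unavoidable.

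The paper's proof (which is Randell's) supplies that input through the map $\bm{\alpha}:M(\A)\to(\C^\times)^n$ of (\ref{eq:tuple}) given by the defining equations. Since $(\C^\times)^n$ is aspherical, $\pi_k((\C^\times)^n)=0$ for $k\geq2$, so by naturality of the Hurewicz map the composite $\bm{\alpha}_*\circ h:\pi_k(M)\to H_k((\C^\times)^n)$ vanishes; and by Proposition \ref{prop:inj} the map $\bm{\alpha}_*$ is injective on homology (this is where the Orlik--Solomon description of $H^\bullet(M)$ as generated in degree one enters, dually), whence $h=0$. Note that Proposition \ref{prop:min} is not used at all. If you want to salvage a universal-cover/minimality argument, you must add the degree-one-generation of cohomology (or the torus embedding) as a hypothesis; minimality alone cannot do the job.
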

\begin{proof}
Recall (\ref{eq:tuple}) that $M(\A)$ can be embedded into 
$(\C^\times)^n$. Note that 
$(\C^\times)^n$ is a $K(\Z^n, 1)$-space, so we have 
$\pi_k((\C^\times)^n)=0$, for $k\geq 2$. By the functoriality and 
naturality of Hurewicz maps, the composition 
$\bm{\alpha}_*\circ h: \pi_k(M(\A))\longrightarrow 
H_k((\C^\times)^n)$ is vanishing. By the 
injectivity 
(Proposition \ref{prop:inj}) of $\bm{\alpha}_*$, we conclude that 
the Hurewicz map $h: \pi_k(M)\longrightarrow H_k(M, \Z)$ 
is vanishing. 
\end{proof}

\subsection{Non-$K(\pi, 1)$ arrangements} 

There are many non-$K(\pi, 1)$ arrangements. 
Hattori \cite{hat75} proved that if $\A=\{H_1, \dots, H_n\}$ 
is a generic arrangement in $\C^\ell$ ($n\geq \ell$), then 
$M(\A)$ is homotopy equivalent to the $\ell$-skeleton 
of the standard CW structure of $n$-torus $T^n=(S^1)^n$, 
which clearly has $\pi_\ell(M(\A))\neq 0$. Papadima and Suciu 
\cite{ps-h} studied the structure of higher homotopy groups 
in a generalized setting. One of the most general results 
concerning non-vanishing of homotopy groups 
is the following. 
The proof in this article was due to M. Falk and S. Papadima. 
\begin{proposition}
\label{prop:gensec}
\cite{yos-gen} 
Let $\A$ be an essential arrangement in 
$V=\C^\ell$ with $\ell\geq 3$. 
Let $F\subset V$ be a $k$-dimensional generic subspace 
with $k\geq 2$. 
Then the arrangement $\A\cap F$ is not $K(\pi, 1)$. 
More precisely, $\pi_k(M(\A)\cap F)\neq 0$. 
\end{proposition}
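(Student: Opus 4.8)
The plan is to analyze the pair $(M,N)$, where $M=M(\A)$ and $N=M(\A)\cap F=M(\A\cap F)$, and to play the homotopy long exact sequence of this pair against the vanishing of the Hurewicz map (Proposition \ref{prop:randell}). Since $F$ is generic of complex dimension $k$ with $2\le k\le\ell-1$, iterating the affine Lefschetz hyperplane section theorem (exactly as in the proof of Proposition \ref{prop:min}) shows that $M$ is homotopy equivalent to a space obtained from $N$ by attaching cells of dimensions $k+1,\dots,\ell$; hence the pair $(M,N)$ is $k$-connected. In the exact sequence
\[
\pi_{k+1}(M)\xrightarrow{\ g\ }\pi_{k+1}(M,N)\xrightarrow{\ \partial\ }\pi_k(N)\longrightarrow\pi_k(M)\longrightarrow\pi_k(M,N)=0
\]
the group $\pi_{k+1}(M,N)$ is then the first relative homotopy group that can be nonzero (and it is abelian, since $k+1\ge 3$). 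It therefore suffices to show that $g$ is \emph{not} surjective: by exactness this gives $\operatorname{coker}(g)\cong\operatorname{im}(\partial)\neq 0$, and hence $\pi_k(N)\neq 0$.

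First I would check that $H_{k+1}(M,N)\neq 0$. The space $N=M(\A\cap F)$ is the complement of an essential arrangement in $\C^k$ (essential because $L(\A)$ has rank $\ell\ge k$, so $L(\A\cap F)\simeq L(\A)_{\le k}$ has a rank-$k$ flat), hence homotopy equivalent to a $k$-dimensional CW complex by Proposition \ref{prop:min}; in particular $H_{k+1}(N)=0$. Also $H_k(N)\hookrightarrow H_k(M)$ is injective by Proposition \ref{prop:inj}. Feeding both facts into the homology long exact sequence of $(M,N)$ gives $H_{k+1}(M,N)\cong H_{k+1}(M)$, which is free of rank $b_{k+1}(M)$. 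Since $\A$ is essential, $L(\A)$ has rank $\ell$, so it possesses flats of every corank $0,1,\dots,\ell$; as the M\"obius numbers of a geometric lattice have sign $(-1)^{\codim X}$, the Poincar\'e polynomial coefficient satisfies $b_{k+1}(M)=\sum_{\codim X=k+1}|\mu(X)|\ge 1$ (here $k+1\le\ell$). Hence $H_{k+1}(M,N)\neq 0$.

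Now for the homological obstruction. By the relative Hurewicz theorem, since $(M,N)$ is $k$-connected and $N$ is path-connected, the relative Hurewicz map $h\colon\pi_{k+1}(M,N)\to H_{k+1}(M,N)$ is surjective (it is the quotient by the $\pi_1(N)$-action). On the other hand, naturality of the Hurewicz map for the inclusion of pairs $(M,\emptyset)\hookrightarrow(M,N)$ yields a commuting square whose top edge is $g$, whose left edge is the absolute Hurewicz map $\pi_{k+1}(M)\to H_{k+1}(M)$, and whose bottom edge is the natural map $H_{k+1}(M)\to H_{k+1}(M,N)$. The left edge vanishes by Proposition \ref{prop:randell}, since $k+1\ge 2$, so $h\circ g=0$. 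If $g$ were surjective this would force $h=0$, contradicting the surjectivity of $h$ onto the nonzero group $H_{k+1}(M,N)$. Therefore $g$ is not surjective, and the conclusion $\pi_k(N)\neq 0$ follows as above.

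The diagram chase is routine; the step I expect to need the most care is the middle paragraph, namely establishing $H_{k+1}(M,N)\cong H_{k+1}(M)\neq 0$. This is precisely where the two structure theorems (Propositions \ref{prop:inj} and \ref{prop:min}) and the hypothesis $k\le\ell-1$ genuinely enter: without $k+1\le\ell$ one could have $H_{k+1}(M)=0$ — for instance for a Boolean arrangement, where $M(\A)=(\C^\times)^\ell$ and $\A\cap F=\A$ can be $K(\pi,1)$ — and the argument would break down. A secondary point to flag explicitly is that $k+1\ge 3$, so that $\pi_{k+1}(M,N)$ is an abelian group and the cokernel computation makes sense.
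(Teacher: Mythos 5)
Your proof is correct, but it takes a genuinely different route from the one in the paper. The paper's argument (due to Falk and Papadima) first reduces to the case $k=\ell-1$, assumes $\pi_{\ell-1}(M(\A)\cap F)=0$, observes that the attaching maps of the top cells are then null-homotopic so that $M(\A)\simeq (M(\A)\cap F)\vee\bigvee_{i=1}^{b_\ell}S^\ell$, and derives a contradiction from the multiplicative structure of cohomology: $H^\ell(M(\A))$ is generated by products of degree-one classes (Proposition \ref{prop:OSPoin}), while $H^1$ of a wedge of $\ell$-spheres vanishes, so the surjection onto $H^\ell(\bigvee S^\ell)=\Z^{b_\ell}\neq 0$ is impossible. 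You instead run the homotopy long exact sequence of the pair $(M,N)$ and play the relative Hurewicz theorem (in its non-simply-connected form, which you correctly invoke as surjectivity of $\pi_{k+1}(M,N)\to H_{k+1}(M,N)$ modulo the $\pi_1(N)$-action) against Randell's vanishing of the absolute Hurewicz map (Proposition \ref{prop:randell}). Both arguments rest on the same two pillars, the Lefschetz-type cell attachment and the embedding of $M(\A)$ into $(\C^\times)^n$; the torus embedding enters for the paper through degree-one generation of the cohomology ring and for you through Proposition \ref{prop:randell}, whose proof uses that embedding. Your organization buys two things: it handles all $k$ with $2\le k\le \ell-1$ uniformly, with no inductive reduction to the hyperplane case, and it locates a source of the nontrivial element of $\pi_k(N)$, namely the image of the boundary map $\partial$. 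Your middle step, the identification $H_{k+1}(M,N)\cong H_{k+1}(M)\cong\Z^{b_{k+1}}\neq 0$ via Propositions \ref{prop:min} and \ref{prop:inj} together with essentiality, is exactly the point where $k+1\le\ell$ is used, and the paper's proof needs the same input implicitly (it requires $b_\ell\neq 0$ for the contradiction to bite).
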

\begin{proof}
It is sufficient to prove the case $k=\ell-1$. Let $F$ be a 
generic hyperplane. Then we will prove that 
$\pi_{\ell-1}(M(\A)\cap F)\neq 0$. 

Recall (Proposition \ref{prop:min}) that $M(\A)$ is 
homotopy equivalent to a space obtained by attaching 
$\ell$-cells to $M(\A)\cap F$. 
Suppose that $\pi_{\ell-1}(M(\A)\cap F)= 0$. Then the 
attaching maps $\partial D^\ell=S^{\ell-1}$ of 
$\ell$-cells must be trivial, and we have 
\[
M(\A)\simeq (M(\A)\cap F)\vee\bigvee_{i=1}^{b_\ell(M)}S^\ell. 
\]
Thus we have a map 
$i: \bigvee_{i=1}^{b}S^\ell\longrightarrow M(\A)$ 
(where $b=b_\ell(M(\A))$) that induces the surjection 
$i^*: H^\ell(M(\A))\longrightarrow 
H^\ell(\bigvee_{i=1}^{b}S^\ell)=\Z^b$. 
Consider the following diagram. 
\[
\begin{CD}
\wedge^\ell H^1(M(\A)) @>{(p)}>> \wedge^\ell H^1(\bigvee_{i=1}^{b}S^\ell)=0 \\
  @V{(q)}VV    @VV{(r)}V \\
H^\ell(M(\A)) @>{(s)}>> H^\ell(\bigvee_{i=1}^{b}S^\ell) =\Z^b
\end{CD}
\]
Then, by Orlik-Solomon's presentation of the cohomology ring 
(Proposition \ref{prop:OSPoin}), the map $(q)$ is surjective. 
Note that $(s)=i^*$ is also surjective. 
Hence, the composition $(s)\circ (q)$ is also surjective, 
which contradicts $(r)\circ (p)=0$. 
\end{proof}

\begin{proposition}
\label{prop:abgr}
\cite{hat75, fr-hom1}
Let $\A$ be an essential arrangement in $\C^\ell$. 
If there exists a subgroup $\Gamma\subset\pi_1(M(\A))$ 
such that $\Gamma\simeq\Z^{\ell+1}$, 
then $\A$ is not $K(\pi, 1)$. 
\end{proposition}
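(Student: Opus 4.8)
The plan is to obstruct $K(\pi,1)$-ness by a cohomological-dimension count. Suppose, toward a contradiction, that $\A$ is $K(\pi,1)$, and set $\pi=\pi_1(M(\A))$. The first step is to record that $\pi$ has small cohomological dimension: by Proposition \ref{prop:min} (here essentiality of $\A$ is used) the complement $M(\A)$ is homotopy equivalent to an $\ell$-dimensional CW complex $X$, and since $M(\A)$, hence $X$, is aspherical, the universal cover $\widetilde X$ is contractible. Therefore the cellular chain complex $C_\bullet(\widetilde X;\Z)$, on which $\pi$ acts freely, is a free $\Z\pi$-resolution of the trivial module $\Z$ of length at most $\ell$, so $\operatorname{cd}\pi\le\ell$.

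Next I would invoke the standard fact that cohomological dimension does not increase on subgroups: writing $\pi$ as a disjoint union of right cosets of $\Gamma$ shows that $\Z\pi$ is free as a left $\Z\Gamma$-module, so restriction of the resolution above along $\Z\Gamma\hookrightarrow\Z\pi$ produces a free $\Z\Gamma$-resolution of $\Z$ of the same length; hence $\operatorname{cd}\Gamma\le\operatorname{cd}\pi\le\ell$ for the given subgroup $\Gamma\simeq\Z^{\ell+1}$. Finally I would compute $\operatorname{cd}\Z^{\ell+1}=\ell+1$: the torus $T^{\ell+1}=(S^1)^{\ell+1}$ is a $K(\Z^{\ell+1},1)$ which is a closed orientable $(\ell+1)$-manifold, so $\operatorname{cd}\le\ell+1$, while $H^{\ell+1}(T^{\ell+1};\Z)\cong\Z\ne 0$ forces $\operatorname{cd}\ge\ell+1$. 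Combining the two bounds yields $\ell+1\le\ell$, the desired contradiction, so $\A$ is not $K(\pi,1)$.

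This argument involves essentially no computation; the content lies in assembling the correct inputs. The one point requiring care is that the asphericity step genuinely needs $M(\A)$ to have the homotopy type of a CW complex of dimension exactly $\ell$, which is precisely what Proposition \ref{prop:min} supplies, rather than merely being aspherical; the subgroup monotonicity $\operatorname{cd}\Gamma\le\operatorname{cd}\pi$ and the value $\operatorname{cd}\Z^{n}=n$ are classical facts of group cohomology (see, e.g., Brown, \emph{Cohomology of Groups}, Ch.~VIII). I expect no genuine obstacle beyond correctly citing Proposition \ref{prop:min}.
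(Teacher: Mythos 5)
Your proof is correct and is essentially the paper's argument in algebraic clothing: the paper takes an $\ell$-dimensional aspherical CW model $K\simeq M(\A)$, passes to the intermediate cover $\widetilde K/\Gamma$, and contrasts $H_{\ell+1}(\widetilde K/\Gamma)=0$ with $H_{\ell+1}(\Z^{\ell+1},\Z)=H_{\ell+1}((S^1)^{\ell+1},\Z)=\Z$, while you run the same dimension count through cohomological dimension (the free $\Z\pi$-resolution from the universal cover, restriction to $\Z\Gamma$, and $\operatorname{cd}\Z^{\ell+1}=\ell+1$). The inputs are identical — Proposition \ref{prop:min} for the $\ell$-dimensional model, contractibility of the universal cover under the $K(\pi,1)$ assumption, and the nonvanishing degree-$(\ell+1)$ (co)homology of the $(\ell+1)$-torus — so there is no gap.
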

\begin{proof}
Let $K$ be an $\ell$-dimensional CW complex which is 
homotopy equivalent to $M=M(\A)$. Suppose $K$ is 
$K(\pi, 1)$. Then the universal cover $\widetilde{K}$ is 
contractible space, on which $\Gamma$ acts freely. Thus 
$K':=\widetilde{K}/\Gamma$ is a $K(\Gamma, 1)$ space. 
Since $K'$ is $\ell$-dimensional, $H_{\ell+1}(K')=0$. However, 
this contradicts the fact that 
$H_{\ell+1}(\Gamma, \Z)=H_{\ell+1}((S^1)^{\ell}, \Z)=\Z$. 
\end{proof}

\begin{example}
Let $\A=\{H_1, \dots, H_6\}$ be the arrangement in 
Figure \ref{fig:flag}. See Example \ref{ex:X3pres} 
for presentations of $\pi_1(M(\A))$. One can directly check 
that $\gamma_2, \gamma_5, [\gamma_3, \gamma_4]$ are 
commutative with each other and generate a subgroup 
isomorphic to $\Z^3$. Hence $M(\A)$ is not $K(\pi, 1)$. 
\end{example}

\subsection{Twisted Hurewicz maps and homotopy groups}

In this section, we recall the notion of the twisted Hurewicz map. 
Let $X$ be a CW complex, $\scL$ be a local system on $X$. 
Let $f: (S^k, *) \longrightarrow (X, x_0)$ be a continuous 
map from the sphere $S^k$ with $k\geq 2$. Then the pull-back 
$f^*\scL$ is a local system on $S^k$. Since $S^k$ is 
simply connected, $f^*\scL$ is a trivial local system. 
Hence the set of global sections $\Gamma(S^k, f^*\scL)$ is 
isomorphic to that of the local sections $\scL_{x_0}$ at $x_0$. 

Since $S^k$ (equipped with an orientation) is closed, 
the continuous map $f: (S^k, *) \longrightarrow (X, x_0)$ 
together with a 
global section $\sigma\in \Gamma(S^k, f^*\scL)$ 
gives a twisted cycle $[f]\otimes\sigma$ on $X$. 
Thus we have a map 
\[
h_{\scL, x_0}: 
\pi_k(X, x_0)\otimes\scL_{x_0}\longrightarrow 
H_k(X, \scL), 
\]
which we call the twisted Hurewicz map. Obviously, when $\scL$ 
is the trivial local system $\Z$, it is the classical Hurewicz map. 

As we have already seen in Proposition \ref{prop:randell}, 
the classical Hurewicz map does not carry any information on 
the higher homotopy groups. However, the twisted Hurewicz map 
can detect some nontrivial elements in the homotopy groups. 
Actually, in the setting of Proposition \ref{prop:gensec}, 
the twisted Hurewicz map become surjective. 

\begin{proposition}
\label{prop:twihure}
\cite{yos-gen} 
Let $\A$ be an essential arrangement in 
$V=\C^\ell$ with $\ell\geq 3$. 
Let $F\subset V$ be a $k$-dimensional generic subspace 
with $k\geq 2$. Let $\scL$ be a generic 
complex rank one local system. Then the twisted Hurewicz map 
\[
h_{\scL, x_0}: 
\pi_k(M(\A)\cap F, x_0)\otimes\scL_{x_0}\longrightarrow 
H_k(M(\A)\cap F, \scL) 
\]
is surjective (and $H_k(M(\A), \scL)$ is nonzero). 
\end{proposition}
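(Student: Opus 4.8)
The plan is to reduce to a generic hyperplane section, exploit the minimal CW structure of the complement coming from the affine Lefschetz theorem, and then read the surjectivity off the long exact sequence of a pair with local coefficients, using the Cohen--Dimca--Orlik vanishing theorem \cite{cdo} to kill the obstructing group. First I would reduce to the case $k=\ell-1$, exactly as in the proof of Proposition~\ref{prop:gensec}: pick a generic affine subspace $F'$ with $F\subset F'\subset V$ and $\dim_{\C}F'=k+1$ (possible since $k\le\ell-1$). Then $\A\cap F'$ is essential in $F'\cong\C^{k+1}$ with $k+1\ge 3$, one has $M(\A)\cap F=M(\A\cap F')\cap F$ with $F$ a generic hyperplane in $F'$, and a generic rank one local system restricts to a generic one; so, after renaming, I may assume $\ell\ge 3$, that $F\subset V=\C^\ell$ is a generic hyperplane, and I set $M:=M(\A)$, $N:=M\cap F=M(\A\cap F)$, $b:=b_\ell(M)$.

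Next I would invoke Proposition~\ref{prop:min} together with the affine Lefschetz theorem \cite{le-ham} (as recalled in \S\ref{subsec:attach}): $M$ is homotopy equivalent to a space obtained from $N$ by attaching $b$ cells of dimension $\ell$, with characteristic maps $\sigma_i\colon (D^\ell,\partial D^\ell)\to(M,N)$ for $i=1,\dots,b$. Since $\ell\ge 3$, the inclusion $N\hookrightarrow M$ induces an isomorphism $\pi_1(N)\xrightarrow{\,\simeq\,}\pi_1(M)$, so $\scL$ lives on $M$ and pulls back to $N$ (written $\scL$ again). The relative cellular chain complex of $(M,N)$ with coefficients in $\scL$ is then concentrated in degree $\ell$ and free of rank $b$, generated by the relative cycles $\sigma_i\otimes s$ (the cell $\sigma_i$ carrying the section of $\sigma_i^*\scL$ determined by $s\in\scL_{x_0}$ and a fixed path); hence $H_\ell(M,N;\scL)\cong\scL_{x_0}^{\oplus b}$. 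The boundary restrictions $\phi_i:=\sigma_i|_{\partial D^\ell}\colon S^{\ell-1}\to N$ give classes $[\phi_i]\in\pi_{\ell-1}(N,x_0)$, and unwinding the definitions of the twisted Hurewicz map and of the connecting homomorphism $\partial\colon H_\ell(M,N;\scL)\to H_{\ell-1}(N,\scL)$ shows
\[ h_{\scL,x_0}\big([\phi_i]\otimes s\big)=\partial\big([\sigma_i\otimes s]\big),\qquad s\in\scL_{x_0}. \]
In particular $\operatorname{image}(h_{\scL,x_0})\supseteq\operatorname{image}(\partial)$.

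To finish, I would examine the long exact sequence of $(M,N)$ with coefficients in $\scL$:
\[ H_\ell(N,\scL)\longrightarrow H_\ell(M,\scL)\xrightarrow{\,j\,}H_\ell(M,N;\scL)\xrightarrow{\,\partial\,}H_{\ell-1}(N,\scL)\xrightarrow{\,\iota_*\,}H_{\ell-1}(M,\scL). \]
Since $\scL$ is generic and $\A$ is essential, the Cohen--Dimca--Orlik vanishing theorem \cite{cdo} gives $H_j(M,\scL)=0$ for $j\neq\ell$; in particular $\iota_*$ has target $0$, so $\partial$ is surjective, and with the previous paragraph $h_{\scL,x_0}$ is surjective onto $H_{\ell-1}(N,\scL)$. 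Moreover $H_\ell(N,\scL)=0$ because $N$ has the homotopy type of an $(\ell-1)$-dimensional complex, so the sequence collapses to $0\to H_\ell(M,\scL)\xrightarrow{j}H_\ell(M,N;\scL)\xrightarrow{\partial}H_{\ell-1}(N,\scL)\to 0$; counting dimensions, $\dim_{\C}H_{\ell-1}(N,\scL)=b-\dim_{\C}H_\ell(M,\scL)=b_\ell(M)-\#\bch(\A)=\#\bch(\A\cap F)$ (using Proposition~\ref{prop:chicount} and the cell count of \S\ref{subsec:attach}), which is positive since $\A$ is essential. Hence $H_{\ell-1}(N,\scL)\neq 0$, and undoing the first reduction yields the statement together with $H_k(M(\A)\cap F,\scL)\neq 0$.

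I expect the genuine work to be concentrated in the second step: making precise that the top cells of the affine-Lefschetz CW structure are attached along exactly the $(\ell-1)$-spheres of $N$ that the twisted Hurewicz map is obliged to hit, and verifying the pairing $h_{\scL,x_0}([\phi_i]\otimes s)=\partial[\sigma_i\otimes s]$ with the correct transport of local sections and signs (this is where the explicit description of the attaching maps, e.g.\ via Proposition~\ref{prop:characterizcells} in the real case, is used). The other ingredients are either cited black boxes (affine Lefschetz, minimality, Cohen--Dimca--Orlik vanishing --- the last of which could alternatively be extracted from the twisted minimal cochain complex of Proposition~\ref{prop:twimincpx} for generic $\rho$) or the elementary fact that an essential arrangement has $\#\bch(\A\cap F)>0$ for a generic affine hyperplane $F$.
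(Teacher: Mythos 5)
Your proposal is correct and follows essentially the same route as the paper's (sketched) argument: reduce to a generic hyperplane section, use the Cohen--Dimca--Orlik vanishing theorem so that the non-vanishing degrees of $H_*(M,\scL)$ and $H_*(M\cap F,\scL)$ differ by one, and observe that the classes of $H_{\ell-1}(M\cap F,\scL)$ can only be cancelled by the attached $\ell$-cells, whose boundary spheres realize the image of the twisted Hurewicz map. Your long-exact-sequence bookkeeping and the identification $h_{\scL,x_0}([\phi_i]\otimes s)=\partial[\sigma_i\otimes s]$ just make the paper's phrase ``to cancel the twisted cycle \dots the twisted Hurewicz map needs to be surjective'' precise.
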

The proof of Proposition \ref{prop:twihure} is based on 
the vanishing theorem of local system homology groups 
\cite{koh, esv, stv, cdo}. For certain generic local systems, 
$H_k(M(\A), \scL)$ vanishes except for $k=\ell$ and the 
dimension of the remaining cohomology is equal to the 
Euler characteristic of $M(\A)$. 
The point of the proof is that 
the non-vanishing degrees of $H_k(M(\A), \scL)$ and 
$H_k(M(\A)\cap F, \scL|_{M(\A)\cap F})$ differ by one 
if $F$ is a generic hyperplane. 
To cancel the twisted cycle of $H_{\ell-1}(M\cap F, \scL)$, 
the twisted Hurewicz map needs to be surjective. 


We now discuss an explicit construction of a sphere that 
is non-trivial in the homotopy group. The setting is as follows. 
Let $\A=\{H_1, \dots, H_n\}$ be a central and essential 
arrangement in $V=\R^\ell$ with $\ell\geq 3$. 
Let $S=S^{\ell-1}$ be the unit sphere centered at the origin. 
Consider the shift of the sphere $S$ into imaginary direction. 
Let $\theta: S\ni x\longmapsto \theta(x)\in T_xV$ 
be a vector field. Using the description of $M(\A)$ in 
\S \ref{sec:cpxf}, we consider the sphere 
\begin{equation}
S(\theta)=\{x+\sqrt{-1}\cdot\theta(x)\mid x\in S\}. 
\end{equation}
Then $S(\theta)$ is embedded into $M(\A)$ if and only if 
\begin{equation}
\label{eq:notincond}
\theta(x)\notin T_xH_i 
\end{equation}
for any $x\in H_i$, $H_i\in\A$. 
Since $\ell\geq 3$, $H_i\cap S$ is connected. Therefore, 
$\theta$ determines a half-space 
$H_i^{\varepsilon_i(\theta)}$, and a sign vector 
$(\varepsilon_1(\theta), \varepsilon_2(\theta), \dots, 
\varepsilon_{n}(\theta))\in\{\pm\}^n$ (Figure \ref{fig:shifted}).

\begin{figure}[htbp]
\centering
\begin{tikzpicture}


\draw[thick] (0,0) circle (3);


\draw[thin, dotted] (0:3 and 0.5) arc (0:180: 3 and 0.5);
\draw[thick] (180:3 and 0.5) arc (180:360: 3 and 0.5);
\draw[thin, ->] (220:3 and 0.5) -- ++(-0.5,-0.3);
\draw[thin, ->] (230:3 and 0.5) -- ++(-0.5,-0.3);
\draw[thin, ->] (240:3 and 0.5) -- ++(-0.5,-0.3);
\draw[thin, ->] (250:3 and 0.5) -- ++(-0.4,-0.3);
\draw[thin, ->] (256:3 and 0.5) -- ++(-0.4,-0.3);
\draw[thin, ->] (260:3 and 0.5) -- ++(-0.2,-0.4);
\draw[thin, ->] (265:3 and 0.5) -- ++(-0,-0.4);
\draw[thin, ->] (270:3 and 0.5) -- ++(0.2,-0.4);
\draw[thin, ->] (275:3 and 0.5) -- ++(0.4,-0.3);
\draw[thin, ->] (283:3 and 0.5) -- ++(0.4,-0.3);
\draw[thin, ->] (290:3 and 0.5) -- ++(0.4,-0.3);
\draw[thin, ->] (300:3 and 0.5) -- ++(0.2,-0.3);
\draw[thin, ->] (310:3 and 0.5) -- ++(0,-0.3);
\draw[thin, ->] (320:3 and 0.5) -- ++(-0.2,-0.3);
\draw[thin, ->] (330:3 and 0.5) -- ++(-0.3,-0.3);
\draw[thin, ->] (340:3 and 0.5) -- ++(-0.3,-0.3);

\draw[thin, dotted, rotate=240] (0:3 and 0.4) arc (0:180: 3 and 0.4);
\draw[thick, rotate=240] (180:3 and 0.4) arc (180:360: 3 and 0.4);
\draw[thin, ->, rotate=240] (200:3 and 0.4) -- ++(0.2, -0.4);
\draw[thin, ->, rotate=240] (210:3 and 0.4) -- ++(0.2, -0.4);
\draw[thin, ->, rotate=240] (220:3 and 0.4) -- ++(0.2, -0.4);
\draw[thin, ->, rotate=240] (230:3 and 0.4) -- ++(0.3, -0.3);
\draw[thin, ->, rotate=240] (240:3 and 0.4) -- ++(0.1, -0.3);
\draw[thin, ->, rotate=240] (250:3 and 0.4) -- ++(-0.2, -0.4);
\draw[thin, ->, rotate=240] (258:3 and 0.4) -- ++(-0.4, -0.3);
\draw[thin, ->, rotate=240] (270:3 and 0.4) -- ++(0, -0.4);
\draw[thin, ->, rotate=240] (280:3 and 0.4) -- ++(0.2, -0.3);
\draw[thin, ->, rotate=240] (300:3 and 0.4) -- ++(0.4, -0.2);
\draw[thin, ->, rotate=240] (310:3 and 0.4) -- ++(0.4, -0.2);
\draw[thin, ->, rotate=240] (320:3 and 0.4) -- ++(0.4, -0.2);

\draw[thin, dotted, rotate=120] (0:3 and 0.3) arc (0:180: 3 and 0.3);
\draw[thick, rotate=120] (180:3 and 0.3) arc (180:360: 3 and 0.3);
\draw[thin, ->, rotate=120] (220:3 and 0.3) -- ++(-0.5,-0.3);
\draw[thin, ->, rotate=120] (230:3 and 0.3) -- ++(-0.5,-0.3);
\draw[thin, ->, rotate=120] (240:3 and 0.3) -- ++(-0.5,-0.3);
\draw[thin, ->, rotate=120] (247:3 and 0.3) -- ++(-0.4,-0.2);
\draw[thin, ->, rotate=120] (260:3 and 0.3) -- ++(-0.2,-0.4);
\draw[thin, ->, rotate=120] (265:3 and 0.3) -- ++(-0,-0.4);
\draw[thin, ->, rotate=120] (270:3 and 0.3) -- ++(0.2,-0.4);
\draw[thin, ->, rotate=120] (275:3 and 0.3) -- ++(0.4,-0.3);
\draw[thin, ->, rotate=120] (290:3 and 0.3) -- ++(0.2,-0.3);
\draw[thin, ->, rotate=120] (300:3 and 0.3) -- ++(0,-0.3);
\draw[thin, ->, rotate=120] (310:3 and 0.3) -- ++(-0.3,-0.3);
\draw[thin, ->, rotate=120] (320:3 and 0.3) -- ++(-0.4,-0.3);
\draw[thin, ->, rotate=120] (330:3 and 0.3) -- ++(-0.4,-0.3);
\draw[thin, ->, rotate=120] (340:3 and 0.3) -- ++(-0.4,-0.3);

\draw[thin, dotted, rotate=165] (0:3 and 2) arc (0:180: 3 and 2);
\draw[thick, rotate=165] (180:3 and 2) arc (180:360: 3 and 2);
\draw[thin, ->, rotate=165] (210:3 and 2) -- ++(0.4,0.3);
\draw[thin, ->, rotate=165] (220:3 and 2) -- ++(0.4,0.3);
\draw[thin, ->, rotate=165] (230:3 and 2) -- ++(0.4,0.3);
\draw[thin, ->, rotate=165] (240:3 and 2) -- ++(0.4,0.2);
\draw[thin, ->, rotate=165] (250:3 and 2) -- ++(0.4,0.2);
\draw[thin, ->, rotate=165] (256:3 and 2) -- ++(0.4,0.2);
\draw[thin, ->, rotate=165] (277:3 and 2) -- ++(0.1,0.3);
\draw[thin, ->, rotate=165] (290:3 and 2) -- ++(-0.3,0.2);
\draw[thin, ->, rotate=165] (310:3 and 2) -- ++(-0.3,0.2);
\draw[thin, ->, rotate=165] (320:3 and 2) -- ++(-0.2,0.3);
\draw[thin, ->, rotate=165] (330:3 and 2) -- ++(-0.3,0.3);
\draw[thin, ->, rotate=165] (340:3 and 2) -- ++(-0.3,0.3);

\end{tikzpicture}
\caption{Shifted sphere} 
\label{fig:shifted}
\end{figure}
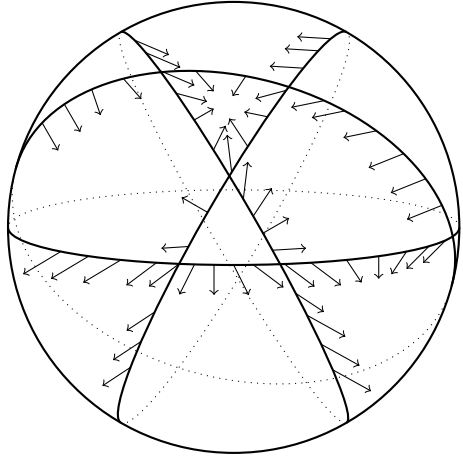
The (non)triviality of $[S(\theta)]\in\pi_{\ell-1}(M(\A))$ is 
checked as follows. 

\begin{proposition}
\label{prop:embsph}
\cite{yos-const} 
$[S(\theta)]=0$ in $\pi_{\ell-1}(M(\A))$ is if and only if 
the intersection of half-spaces 
$\bigcap_{i=1}^nH_i^{\varepsilon_i(\theta)}$ is non-empty. 
\end{proposition}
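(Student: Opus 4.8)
For the ``if'' direction, suppose $C:=\bigcap_{i=1}^n H_i^{\varepsilon_i(\theta)}$ is non-empty; then $C$ is a chamber of $\A$, and I would pick $p\in C$. First deform $\theta$ to the constant vector field $p$ through $\theta_s(x)=(1-s)\theta(x)+sp$, $0\le s\le 1$: for $x\in H_i\cap S$ both $\alpha_i(\theta(x))$ and $\alpha_i(p)$ have sign $\varepsilon_i(\theta)$, so $\alpha_i(\theta_s(x))\neq 0$ and hence $S(\theta_s)\subset M(\A)$ throughout. Thus $[S(\theta)]=[S+\sqrt{-1}\cdot p]$ in $\pi_{\ell-1}(M(\A))$. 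Now contract radially by $(x,s)\mapsto (1-s)x+\sqrt{-1}\cdot p$: the imaginary part of each $\alpha_i$ along this homotopy is the nonzero constant $\alpha_i(p)$, so it stays in $M(\A)$ and ends at the point $\sqrt{-1}\cdot p$. Hence $[S(\theta)]=0$.

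For the ``only if'' direction the starting observations are: (a) $[S(\theta)]$ depends only on the sign vector $\varepsilon(\theta)$, since if $\varepsilon(\theta')=\varepsilon(\theta)$ the straight-line homotopy $(1-s)\theta+s\theta'$ preserves the sign vector and hence stays in $M(\A)$ by the same convexity computation; and (b) since the vector field $\theta$ exists, for every $x\in S$ the half-spaces $\{H_i^{\varepsilon_i}:x\in H_i\}$ have a common point. Assume $\bigcap_{i=1}^n H_i^{\varepsilon_i}=\emptyset$. By Helly's theorem in $\R^\ell$ choose a minimal $J\subseteq[n]$ with $\bigcap_{i\in J}H_i^{\varepsilon_i}=\emptyset$. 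Remark (b) forces $\bigcap_{i\in J}H_i=\{0\}$ (otherwise a point of that subspace lying on $S$ would violate (b)), so $\{\alpha_i:i\in J\}$ has rank $\ell$; minimality plus Helly give $\#J=\ell+1$; and a short argument with the (unique up to scale) relation among $\{\alpha_i:i\in J\}$ together with minimality shows that every $\ell$-subset of $J$ is a basis, i.e.\ the sub-arrangement $\A|_J=\{H_i:i\in J\}$ is a \emph{generic} central arrangement of $\ell+1$ hyperplanes. Since $S(\theta)\subset M(\A)\subset M(\A|_J)$ and $\varepsilon(\theta)|_J$ is the (essentially unique) non-chamber sign vector of $\A|_J$, it suffices to prove $[S(\theta)]\neq 0$ in $\pi_{\ell-1}(M(\A|_J))$.

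For this universal case I would argue with the explicit homotopy type. Fixing a hyperplane equation trivializes $M(\A|_J)\cong \C^\times\times M(\mathbb{P}(\A|_J))$, and by Hattori's theorem \cite{hat75} the complement $M(\mathbb{P}(\A|_J))$ of $\ell$ affine hyperplanes in general position in $\C^{\ell-1}$ is homotopy equivalent to the $(\ell-1)$-skeleton of $(S^1)^\ell$, that is to $(S^1)^\ell\setminus\{\mathrm{pt}\}$. Its universal cover is $\R^\ell$ with a lattice $\Lambda$ of points removed, which is genuinely $(\ell-2)$-connected, so $\pi_{\ell-1}(M(\A|_J))\cong H_{\ell-1}(\R^\ell\setminus\Lambda)$ is free on the classes $[\Sigma_\lambda]$ of the small $(\ell-1)$-spheres linking the removed points. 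The plan is then to trace the shifted sphere $S(\theta)$ through this chain of identifications and recognize it, up to the deck action and sign, as $[\Sigma_\lambda]$ for the removed point $\lambda$ that precisely records the absent chamber sign vector $\varepsilon(\theta)$; in particular it is nonzero. (An alternative, more conceptual route that avoids the reduction is to detect $[S(\theta)]$ directly by a twisted Hurewicz pairing as in Proposition \ref{prop:twihure}: for a generic complex rank-one local system $\scL$ the homology $H_\bullet(M(\A),\scL)$ is concentrated in the top degree, the twisted Hurewicz map is then surjective there, and one computes the class of the twisted cycle $[S(\theta)]\otimes\sigma$ in the explicit twisted minimal complex of Proposition \ref{prop:twimincpx}, where it comes out — up to units — as a combination of the elements $\Delta(C,C')$ which is nonzero exactly because no chamber realizes $\varepsilon(\theta)$.)

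The hard part, in either route, is this final identification: converting the \emph{geometry} of the shifted sphere into the \emph{combinatorics} of the sign vector. Concretely one must understand, for each $i$, how $S(\theta)$ (and any disc one tries to bound it with) winds around $H_i\otimes\C$ — a linking-number/degree bookkeeping controlled by the signs $\varepsilon_i(\theta)$ — and see that the total obstruction it produces is exactly the condition $\bigcap_i H_i^{\varepsilon_i}\neq\emptyset$. Everything else (the two homotopies of the ``if'' direction, remark (a), the Helly reduction, and the minimality of $M(\A)$ from Proposition \ref{prop:min}) is elementary by comparison.
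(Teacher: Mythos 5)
Your ``if'' direction is complete and correct: the two straight-line homotopies (first moving the imaginary part to the constant field $p\in C$, then contracting the real part radially) accomplish exactly what the paper's one-line sketch does by extending $\theta$ over the ball $D^\ell$, and your sign checks are the right ones, since for a central arrangement $x+\sqrt{-1}v\in H_i\otimes\C$ forces $\alpha_i(x)=\alpha_i(v)=0$. Your observation (a) and the Helly reduction are also sound: minimality of $J$ combined with Gordan's theorem (emptiness of $\bigcap_{i\in J}\{\varepsilon_i\alpha_i>0\}$ yields a non-trivial non-negative relation among the $\varepsilon_i\alpha_i$, which must be the unique relation of the rank-$\ell$ family and must have full support) does give $\#J=\ell+1$ with every $\ell$-subset a basis, so $\A|_J$ is generic central and $\varepsilon(\theta)|_J$ is its missing sign vector.

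The ``only if'' direction, however, has a genuine gap, and you have located it yourself: the claim that the lift of $S(\theta)$ to $\R^\ell\setminus\Lambda$ is, up to deck action and sign, a linking sphere $[\Sigma_\lambda]$ --- or, on the alternative route, that the twisted Hurewicz image of $[S(\theta)]$ is non-zero --- is announced as ``the plan'' and never argued. This is not a deferrable verification; it is the entire content of the proposition. Concretely, after composing $x\mapsto x+\sqrt{-1}\,\theta(x)$ with the projectivization, Hattori's homotopy equivalence onto the $(\ell-1)$-skeleton of $T^\ell$, and a lift to the universal cover, one must compute for each $\lambda\in\Lambda$ the degree of the resulting map $S^{\ell-1}\to\R^\ell\setminus\{\lambda\}$ and show that some degree is non-zero exactly when $\varepsilon(\theta)$ is the non-realizable sign vector; nothing in your write-up produces that degree. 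The paper's own argument (which you mention only parenthetically) does precisely this work in a dual form: it pairs $[S(\theta)]$ with an explicitly constructed twisted Borel--Moore cycle $[W]\in H_{\ell+1}^{BM}(M(\A),\scL^{\vee})$ for the local system $\gamma_i\mapsto e^{2\pi\sqrt{-1}/n}$ and shows the twisted intersection number is non-zero, which is the analogue of the missing linking-number count and is where all the work of \cite{yos-const} lives. Your Helly reduction is an attractive simplification of the target group (it makes $\pi_{\ell-1}$ of the ambient space explicit and free abelian over the deck group), but it does not by itself yield the non-vanishing; as written, the proposal proves only one implication.
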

The ``if'' part is relatively easy. Indeed, if the intersection of 
half-spaces 
$\bigcap_{i=1}^nH_i^{\varepsilon_i(\theta)}=C$ is a chamber, 
one can extend the vector field $\theta$ to that of the ball 
$\widetilde{\theta}: D^\ell\longrightarrow TV$ in such a way 
that the condition (\ref{eq:notincond}) is fulfilled. 

The proof of ``only if'' part needs the twisted Hurewicz map. 
Suppose $\bigcap_{i=1}^nH_i^{\varepsilon_i(\theta)}=\emptyset$ 
(see Figure \ref{fig:shifted} for an example). 
Let $\scL$ be the complex rank one local system defined by 
$\rho: \pi_1(M(\A))\longrightarrow\C^\times$, 
$\gamma_i\longmapsto e^{2\pi\sqrt{-1}/n}$, where 
$\gamma_i$ is the meridian of $H_i$. Then the image of 
$[S(\theta)]$ by the twisted Hurewicz map 
\[
h_{\scL, x_0}: \pi_{\ell-1}(M(\A), x_0)\otimes\scL_{x_0}
\longrightarrow
H_{\ell-1}(M(\A), \scL)
\]
is non-zero. More precisely, one can construct an 
explicit twisted Borel-Moore cycle $[W]\in H_{\ell+1}^{BM}
(M(\A), \scL^{\vee})$ such that the twisted intersection number 
is nonzero. See \cite{yos-const} for details.

\medskip

\emph{Acknowledgements.} 
 This work was partially supported by JSPS KAKENHI 
(Grant No. JP23H00081).

\end{document}